\documentclass[a4paper,10pt]{amsart}

\usepackage[latin1]{inputenc}
\usepackage{amsmath,amsthm,amssymb}
\usepackage{epsfig}
\usepackage{caption}

\usepackage{tikz}
\usepackage{xifthen}
\usetikzlibrary{calc,decorations.markings,arrows}
\usepackage[all]{xy}
\usepackage{graphicx}

%%%%%%%%%%%%%%%%%%%%%%%%%%%%%%%%%%%%%%%%%%
% lettres maths
%%%%%%%%%%%%%%%%%%%%%%%%%%%%%%%%%%%%%%%%%%

%%%%%%%%%%%%%%%%%%%%%%%%%%%%%%%%%%%%%%%%%%
% lettres grecques
%%%%%%%%%%%%%%%%%%%%%%%%%%%%%%%%%%%%%%%%%%
\newcommand{\al}{\alpha}
\newcommand{\eps}{\varepsilon}

\newcommand{\vph}{\varphi}

%%%%%%%%%%%%%%%%%%%%%%%%%%%%%%%%%%%%%%%%%%
% fleches
%%%%%%%%%%%%%%%%%%%%%%%%%%%%%%%%%%%%%%%%%%
\newcommand{\fl}{\rightarrow}
\newcommand{\gfl}{\longrightarrow}

%%%%%%%%%%%%%%%%%%%%%%%%%%%%%%%%%%%%%%%%%%
% diagrammes xymatrix
%%%%%%%%%%%%%%%%%%%%%%%%%%%%%%%%%%%%%%%%%%

%  i : inflation 
%  m : mono 
%  e : epi/deflation
% eg : egal 
%  p : pointille

%%%%%%%%%%%
%% droite
%%%%%%%%%%%
\newcommand{\dr}{\ar@{->}[r]}
\newcommand{\dri}{\ar@{>->}[r]}
\newcommand{\drp}{\ar@{-->}[r]}
\newcommand{\dre}{\ar@{->>}[r]}
\newcommand{\dreg}{\ar@{=}[r]}
\newcommand{\drm}{\ar@{^{(}->}[r]}
\newcommand{\ddr}{\ar@{->}[rr]}
\newcommand{\ddre}{\ar@{->>}[rr]}
\newcommand{\ddreg}{\ar@{=}[rr]}
%%%%%%%%%%
%% haut
%%%%%%%%%%
\newcommand{\ha}{\ar@{->}[u]}
\newcommand{\hae}{\ar@{->>}[u]}
\newcommand{\hap}{\ar@{-->}[u]}
\newcommand{\ham}{\ar@{^{(}->}[u]}
\newcommand{\hham}{\ar@{^{(}->}[uu]}
\newcommand{\hag}{\ar@{->}[ul]}
\newcommand{\hagm}{\ar@{^{(}->}[ul]}
\newcommand{\hagp}{\ar@{-->}[ul]}
%%%%%%%%%%%%%%%%%
%% haut-droite
%%%%%%%%%%%%%%%%%
\newcommand{\hdr}{\ar@{->}[ur]}
\newcommand{\hdrm}{\ar@{^{(}->}[ur]}
\newcommand{\hdri}{\ar@{>->}[ur]}
\newcommand{\hdre}{\ar@{->>}[ur]}
\newcommand{\hdrp}{\ar@{-->}[ur]}
%%%%%%%%%%%%%%%%%
%% bas
%%%%%%%%%%%%%%%%%
\newcommand{\bas}{\ar@{->}[d]}
\newcommand{\bbas}{\ar@{->}[dd]}
\newcommand{\basm}{\ar@{^{(}->}[d]}
\newcommand{\basi}{\ar@{>->}[d]}
\newcommand{\base}{\ar@{->>}[d]}
\newcommand{\basp}{\ar@{-->}[d]}
\newcommand{\baseg}{\ar@{=}[d]}
\newcommand{\bbaseg}{\ar@{=}[dd]}
%%%%%%%%%%%%%%%%%%%%
%% bas-droite
%%%%%%%%%%%%%%%%%%%%
\newcommand{\bdr}{\ar@{->}[dr]}
\newcommand{\bdre}{\ar@{->>}[dr]}
\newcommand{\bbdr}{\ar@{->}[ddr]}
\newcommand{\bddr}{\ar@{->}[drr]}
%%%%%%%%%%%%%%%%%%%%
%% bas-gauche
%%%%%%%%%%%%%%%%%%%%
\newcommand{\bg}{\ar@{->}[dl]}
\newcommand{\bgm}{\ar@{^{(}->}[dl]}
\newcommand{\bgp}{\ar@{-->}[dl]}
\newcommand{\bggp}{\ar@{-->}[dll]}
\newcommand{\bbgp}{\ar@{-->}[ddl]}
%%%%%%%%%%%%%%%%%%%%
%% arrondies
%%%%%%%%%%%%%%%%%%%%
\newcommand{\bdro}{\ar@/_.5pc/@{->}[dr]^0}
\newcommand{\bgex}{\ar@/^.5pc/@{-->}[dl]}

%%%%%%%%%%%%%%%%%%%%%%%%%%%%%%%%%%%%%%%%%%%%%%%%%%%%%
% Categories
%%%%%%%%%%%%%%%%%%%%%%%%%%%%%%%%%%%%%%%%%%%%%%%%%%%%%
\newcommand{\ac}{\mathcal{A}}
\newcommand{\bc}{\mathcal{B}}
\newcommand{\cat}{\mathcal{C}}

\newcommand{\ec}{\mathcal{E}}

\renewcommand{\sc}{\mathcal{S}}
\newcommand{\tc}{\mathcal{T}}

%%%%%%%%%%%%%%%%
%% modules
%%%%%%%%%%%%%%%%
\newcommand{\modl}{\operatorname{mod}\Lambda}

%%%%%%%%%%%%%%%%%%%%%%%%%%%%%%%%%%%%%%%%%
%% triangulees
%%%%%%%%%%%%%%%%%%%%%%%%%%%%%%%%%%%%%%%%%
\newcommand{\shift}{\Sigma}
\newcommand{\susm}{\Sigma^{-1}}
%%%%%%%%%%%%%%%%%%%%%%%%%%%%%%%%%%%%%%%%%
%% abeliennes
%%%%%%%%%%%%%%%%%%%%%%%%%%%%%%%%%%%%%%%%%

%%%%%%%%%%%%%%%%%%%%%%%%%%%%%%%%%%%%%%%%%
%% exactes
%%%%%%%%%%%%%%%%%%%%%%%%%%%%%%%%%%%%%%%%%

%%%%%%%%%%%%%%%%%%%%%%%%%%%%%%%%%%%%%%%%%
%% morphismes
%%%%%%%%%%%%%%%%%%%%%%%%%%%%%%%%%%%%%%%%%

\newcommand{\ext}{\operatorname{Ext}}
\newcommand{\homph}{\operatorname{Hom}}

%%%%%%%%%%%%%%%%%%%%%%%%%%%%%%%%%%%%%%%%%%%%%%
%% theoremes
%%%%%%%%%%%%%%%%%%%%%%%%%%%%%%%%%%%%%%%%%%%%%%
\theoremstyle{plain}
\newtheorem{lemma}{Lemma}[section]
\newtheorem{theo}[lemma]{Theorem}
\newtheorem{cor}[lemma]{Corollary}

\newtheorem{prop}[lemma]{Proposition}

\newtheorem*{cornonumber}{Corollary} %cor with no number

\newtheorem{thmx}{Theorem}
 % letter theorems

\numberwithin{equation}{section}

\theoremstyle{definition}
\newtheorem{defi}[lemma]{Definition}
\newtheorem{rk}[lemma]{Remark}

%%%%%%%%%%%%%%%%%%%%%%%%%%%%%%%%%%%%%%%%%%%%%
%===========================================%
%%%%%%%%%%%%%%%%%%%%%%%%%%%%%%%%%%%%%%%%%%%%%

\newcommand{\End}{\operatorname{End}}

\newcommand{\had}{\ar@[]}

\newcommand{\add}{\operatorname{add}}

\newcommand{\tbar}{\overline{T}}
\newcommand{\ubar}{\overline{U}}
\newcommand{\vbar}{\overline{V}}
\newcommand{\ct}{\cat(T)}
\newcommand{\cbart}{\overline{\cat}(T)}
\newcommand{\qbar}{\overline{Q}}

\newcommand{\catt}{\cbart/(\shift T')}

\newcommand{\fbar}{\overline{F}}
\newcommand{\lbar}{\overline{\Lambda}}
\newcommand{\modlbar}{\operatorname{mod}\lbar}

\newcommand{\morone}{\mathcal{R}}
\newcommand{\mortwo}{\morone^{\ast}}

\title{Nearly Morita equivalences and rigid objects}

\author{Robert Marsh}
\address{School of Mathematics, University of Leeds, Leeds, LS2 9JT, UK}
\email{marsh@maths.leeds.ac.uk}

\author{Yann Palu}
\address{LAMFA, Facult\'e des sciences, 33, rue Saint-Leu, 80039 Amiens Cedex 1}
\email{yann.palu@u-picardie.fr}

\begin{document}

\begin{abstract}
If $T$ and $T'$ are two cluster-tilting objects of an acyclic cluster category
related by a mutation, their endomorphism algebras are nearly-Morita equivalent~\cite{BMR-CTA}, i.e.\ 
their module categories are equivalent ``up to a simple module''.
This result has been generalised by D. Yang, using a result of P-G. Plamondon,
to any simple mutation of maximal rigid objects in a 2-Calabi--Yau triangulated category.
In this paper, we investigate the more general case of any mutation of a (non-necessarily maximal)
rigid object in a triangulated category with a Serre functor.
In that setup, the endomorphism algebras might not be nearly-Morita equivalent and we obtain a weaker property that we call pseudo-Morita equivalence.
Inspired by~\cite{BMloc2,BMloc1}, we also describe our result in terms of localisations.
\end{abstract}

\thanks{This work was supported by the Engineering and Physical Sciences Research Council
[grant number EP/G007497/1] and the Institute for Mathematical Research (FIM) at the ETH Z\"{u}rich.}

\maketitle

\tableofcontents

\section*{Introduction and main results}
In this paper, our aim is to prove a weak form of nearly-Morita equivalence for mutations of (non-maximal) rigid objects in triangulated categories.
Before recalling the case of cluster-tilting objects~\cite{BMR-CTA}, we first give an example.

Let $Q$ be a linear orientation of the Dynkin
diagram of type $A_3$. The Auslander--Reiten quiver
of the acyclic cluster category $\cat_Q$, defined in~\cite{BMRRT}, is as follows:
$$
\xymatrix@=0.7cm@!@-.5pc{
&&  T_3 \ar[dr]
&&   \shift T_1    \ar[dr]
&&   T_1 \ar[dr]
&&  \shift T_2^\ast   \\
&   T_2 \ar[dr] \ar[ur]
&&   \ar[dr] \ar[ur]
&&   \shift T_2 \ar[dr] \ar[ur]
&&   T_2 \ar[dr] \ar[ur]
&       \\
    T_1 \ar[ur]
&&  \shift T_2^\ast \ar[ur]
&&  T_2^\ast \ar[ur]
&&  \shift T_3\ar[ur]
&&  T_3
}
$$
The object $T=T_1\oplus T_2\oplus T_3$ is cluster-tilting. Its mutation
at $T_2$ is the cluster-tilting object $T'=T_1\oplus T_2^\ast\oplus T_3$.
We write $\Gamma$ for the cluster-tilted algebra $\operatorname{End}_\cat(T)^\text{op}$
and $\Gamma'$ for $\operatorname{End}_\cat(T')^\text{op}$.
Then the two algebas $\Gamma$ and $\Gamma'$ are related as follows.

On the one hand, the functor $\cat(T,-)$ induces an equivalence of categories
$\cat/(\shift T) \simeq \operatorname{mod} \Gamma$, where $\operatorname{mod} \Gamma$
is the category of finitely generated left modules,
and the Auslander--Reiten quiver of $\operatorname{mod} \Gamma$
is thus:
$$
\xymatrix@!@-.5pc{
&& \ar[dr]
&&                   \\
&  \ar[dr] \ar[ur]
&& \ar[dr]
&                    \\
   \ar[ur]
&& S_2 \ar[ur]
&&
}
$$
where $S_2=\cat(T,\shift T_2^\ast)$
is the simple top of the projective indecomposable
$\cat(T,T_2)$.

On the other hand, the functor
$\cat(T',-)$ induces an equivalence of categories
$\cat/(\shift T') \simeq \operatorname{mod} \Gamma'$
and the Auslander--Reiten quiver of $\operatorname{mod} \Gamma'$
is thus:
$$
\xymatrix@!@-.5pc{
&&&   \ar[dr]_{\phantom{t}}="t"
&&&&                   \\
      S_2^\ast \ar[dr]^{\phantom{a}}="a"
&&    \ar[ur]_{\phantom{s}}="s"
&&    \ar[dr]^{\phantom{c}}="c"
&&    S_2^\ast \ar[ur]
&                      \\
&     \ar[ur]^{\phantom{b}}="b"
&&&&  \ar[ur]^{\phantom{d}}="d"
&&
\ar@{..}@/_/"a";"b"
\ar@{..}@/^/"s";"t"
\ar@{..}@/_/"c";"d"
}
$$
where $S_2^\ast=\cat(T',\shift T_2)$
is the simple top of the projective indecomposable
$\cat(T',T_2^\ast)$,
where the two arrows starting at $S_2^\ast$ are identified,
and where dots indicate zero relations.

The two Auslander--Reiten quivers are not isomorphic,
therefore $\Gamma$ and $\Gamma'$ are not Morita equivalent.
But they are not very far from being so: The difference
in the Auslander--Reiten quivers comes from the simples $S_2$ and $S_2^\ast$.

The common Auslander-Reiten quiver of the
categories $\operatorname{mod} \Gamma/(\add S_2)$
and $\operatorname{mod} \Gamma/(\add S_2^{\ast})$
is thus:
$$
\xymatrix@!{
&& \ar[dr]_{\phantom{t}}="t"
&& \\
&  \ar[ur]_{\phantom{s}}="s"
&& \ar[dr]
&  \\
   \ar[ur]
&&&&
\ar@{..}@/^/"s";"t"
}
$$
This phenomenon, proved in \cite{BMR-CTA},
has been called ``nearly Morita equivalence'' by C. M. 
Ringel. Let us state the precise result.

Let $Q$ be an acyclic quiver, and let $T$ be a cluster-tilting object in the cluster category $\cat_Q$.
Let $T'=T/T_k\oplus T_k^\ast$ be the mutation of $T$
at an indecomposable summand $T_k$; then $T'$
is also a cluster-tilting object.
Let $\Gamma$ (respectively, $\Gamma'$)
be the cluster-tilted algebra $\operatorname{End}_{\cat_Q}(T)^\text{op}$
(respectively, $\operatorname{End}_{\cat_Q}(T')^\text{op}$)
and $S_k$ (respectively, $S_k^\ast$) be the simple top of the
projective indecomposable $\Gamma$-module $\cat_Q(T,T_k)$
(respectively, the simple top of the
$\Gamma'$-module $\cat_Q(T',T_k^\ast)$).

Then, by a result of~\cite{BMR-CTA}, the
categories $\operatorname{mod} \Gamma / \add S_k$ and $\operatorname{mod}\Gamma' / \add S_k^\ast$
are equivalent. By~\cite[Corollary 4.3]{Yang-ClusterTube}, nearly-Morita equivalence, in the more general setup of simple, $2$-periodic mutations of rigid objects (or rigid, Krull--Schmidt subcategories)
in any triangulated category, follows from~\cite[Proposition 2.7]{Plamondon-CC}.

Our main aim in this paper is to prove an analoguous result for any mutation of (non-maximal) rigid objects.
Before explaining our results, let us have a look at an example which shows that one cannot expect these mutations to induce a nearly-Morita equivalence in general.

Let $T=T_1\oplus T_2\oplus T_3$ be the rigid
object of the acyclic cluster category $\cat = \cat_{A_4}$
given by:
$$
\xymatrix@!@-.7pc{
&&&  T_3 \ar[dr]
&&       \ar[dr]
&&   T_1 \ar[dr]
&&&        \\
&&   \ar[dr] \ar[ur]
&&   \ar[dr] \ar[ur]
&&   \ar[dr] \ar[ur]
&&   T_2 \ar[dr]
&&           \\
&    T_2 \ar[dr] \ar[ur]
&&  \ar[dr] \ar[ur]
&&  \ar[dr] \ar[ur]
&&  \ar[dr] \ar[ur]
&&  \ar[dr]
&            \\
     T_1 \ar[ur]
&&       \ar[ur]
&&   T_2^\ast \ar[ur]
&&       \ar[ur]
&&       \ar[ur]
&&   T_3
}
$$
and let $T'=T_1\oplus T_2^\ast\oplus T_3$ be
the rigid object obtained by mutating $T$ at
the summand $T_2$. This means that $\shift T_2^\ast$
is the cone of a minimal right $\add T/T_2$-approximation of $T_2$.
In the example, there is a triangle
$T_2^\ast\fl T_1\fl T_2\fl \shift T_2^\ast$.
Let $\Lambda$ (respectively, $\Lambda'$) be the algebra
$\operatorname{End}_\cat(T)^\text{op}$ (respectively, $\operatorname{End}_\cat(T')^\text{op}$).
Using results in \cite{BMloc2}, \cite{BMloc1} (see also~\cite{KR1}), we can easily compute the
AR quivers of $\operatorname{mod}\Lambda$ and $\operatorname{mod}\Lambda'$:
$$
\xymatrix@!@-.7pc{
&&& \ar[dr]_{\phantom{t}}="t" &  && && \ar[dr] && \\
S_2^\ast \ar[dr]^{\phantom{a}}="a" && \ar[ur]_{\phantom{s}}="s" &&  && & \ar[dr] \ar[ur] && \ar[dr] & \\
& \ar[ur]^{\phantom{b}}="b" &&&  && \ar[ur] && S_2 \ar[ur] && 
\ar@{.}@/_/"a";"b" \ar@{..}@/^/"s";"t"
}
$$
$$
\operatorname{mod} \Lambda' \hspace{14pc} \operatorname{mod} \Lambda
$$

The algebras $\Lambda$ and $\Lambda'$ are not nearly-Morita equivalent. On factoring out by $S_2$
(respectively, $S_2^{\ast}$), we obtain the following
Auslander-Reiten quivers:
$$
\xymatrix{
&&& \ar[dr]_{\phantom{t}}="t" &  && && \ar[dr]_{\phantom{s}}="b" && \\
&&  \ar[ur]_{\phantom{s}}="s" &&  && & \ar[ur]_{\phantom{t}}="a" && \ar[dr] & \\
&   \ar[ur] &&&  && \ar[ur] &&  && 
\ar@{..}@/^/"a";"b" \ar@{..}@/^/"s";"t"
}
$$
$$
\operatorname{mod} \Lambda' / \operatorname{add}S_2^\ast \hspace{10pc} \operatorname{mod} \Lambda/\operatorname{add}S_2
$$
However, these algebras are not very far from being nearly-Morita equivalent.
Indeed, the Auslander--Reiten quivers differ by only one arrow. The corresponding morphism can be characterised in $\operatorname{mod} \Lambda$ as
being surjective with kernel in the subcategory $\add S_2$.

Let $\cat$ be an acyclic cluster category, and let $T$ be a rigid object in $\cat$.
Let $T'=T/T_k\oplus T_k^\ast$ be the mutation of $T$
at the summand $T_k$.
Let $\Lambda$ (respectively, $\Lambda'$)
be the algebra $\operatorname{End}_\cat(T)^\text{op}$
(respectively, $\operatorname{End}_\cat(T')^\text{op}$),
and let $S_k$ (respectively, $S_k^\ast$) be the simple top of the projective indecomposable $\Lambda$-module $\cat(T,T_k)$ (respectively, the $\Lambda'$-module $\cat(T',T_k^\ast)$).

As suggested by the example above, 
let us consider the class $\morone$ of epimorphisms in $\operatorname{mod}\Lambda$ with kernels in $\add S_k$, and the class $\mortwo$ of monomorphisms in
$\operatorname{mod}\Lambda'$ with cokernels in
$\add S_k^{\ast}$.

\begin{thmx} \label{theoremA}
There is an equivalence of categories:
$$(\operatorname{mod}\Lambda)_{\morone}\simeq (\operatorname{mod}\Lambda')_{\mortwo}.$$
\end{thmx}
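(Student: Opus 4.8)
The plan is to realise both localisations as localisations of the ambient cluster category $\cat$, via the functors $\cat(T,-)$ and $\cat(T',-)$, and then to show that these two localisations of $\cat$ coincide, the point being that the mutation becomes invisible once the relevant simple modules are inverted to zero objects.

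First I would set up a calculus of fractions. The class $\morone$ of epimorphisms with kernel in $\add S_k$ admits a calculus of right fractions: given $s\in\morone$ and an arbitrary morphism $f$ with the same target, the pullback of $s$ along $f$ is again an epimorphism whose kernel is isomorphic to $\ke s$, hence lies in $\morone$, so the Ore condition holds; the cancellation axioms are then immediate because the members of $\morone$ are epimorphisms with controlled kernels. Dually, $\mortwo$ admits a calculus of left fractions, the Ore condition being verified by pushouts, using that a pushout of a monomorphism in an abelian category is again a monomorphism with isomorphic cokernel. In particular each localisation is locally small and its morphisms are given by roofs; moreover, since $S_k\fl 0$ lies in $\morone$ and $0\fl S_k^\ast$ lies in $\mortwo$, the simple modules $S_k$ and $S_k^\ast$ become zero objects in their respective localisations.

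Next I would compare the two categories through $\cat$. Recall from \cite{BMloc1,BMloc2} that $G=\cat(T,-)$ exhibits $\operatorname{mod}\Lambda$ as a localisation $\dc_\wc$ of a suitable full subcategory $\dc\subseteq\cat$ at the class $\wc$ of morphisms sent to isomorphisms by $G$, and similarly $G'=\cat(T',-)$ exhibits $\operatorname{mod}\Lambda'$ as a localisation of a subcategory $\dc'$. Composing localisations, $(\operatorname{mod}\Lambda)_{\morone}$ is the localisation of $\dc$ at the saturation of $\wc\cup G^{-1}(\morone)$, and $(\operatorname{mod}\Lambda')_{\mortwo}$ the localisation of $\dc'$ at the saturation of $\wc'\cup (G')^{-1}(\mortwo)$. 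It then suffices to prove that these two saturated classes of morphisms of $\cat$ define the same localised category. Here I would use the two mutation triangles $T_k^\ast\fl B\fl T_k\fl\shift T_k^\ast$ and $T_k\fl B'\fl T_k^\ast\fl\shift T_k$, with $B,B'\in\add(T/T_k)$ the minimal approximations: once $S_k$ is inverted to zero, the first triangle identifies $T_k$ with $B$ up to the localising class, and symmetrically the second identifies $T_k^\ast$ with $B'$; since $B$ and $B'$ lie in the common summand $\add(T/T_k)$, on which $G$ and $G'$ agree through $\End_\cat(T/T_k)^{\mathrm{op}}$, the two saturated classes are forced to coincide, giving the desired equivalence.

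The main obstacle is exactly this last step: identifying $G^{-1}(\morone)$ and $(G')^{-1}(\mortwo)$ explicitly inside $\cat$ and proving that their saturations agree. Unlike the cluster-tilting case, for a non-maximal rigid $T$ the functor $G$ is a genuine localisation rather than a quotient by $\add\shift T$, so $\wc$ carries an extra \emph{tail} of morphisms coming from the Serre functor; keeping track of this tail while checking that the mutation triangles still force equality of the two saturated classes is the delicate point, and is presumably where the standing hypothesis that $\cat$ is an acyclic cluster category, together with its known localisation description, is essential.
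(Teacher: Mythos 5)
Your opening moves are reasonable, and in fact parallel the paper: realising $(\modl)_{\morone}$ as a localisation of the ambient category $\cat$ is exactly what Lemma~\ref{lemma: Stilde and SB0} does (there the combined class is called $\widetilde{\sc}$, and by \cite{BMloc1} one may take your $\dc$ to be all of $\cat$), with the identification $\sc_{\bc,0}=\morone$ carried out in the proof of Theorem~\ref{theoremB}. The fatal problem is your last step: the claim that the two saturated classes of morphisms of $\cat$ coincide is not merely unproven, it is false, so no argument can complete the strategy as stated. A morphism $X\fl 0$ lies in the saturation of a class precisely when $X$ becomes a zero object in the corresponding localisation. The composite $\cat\fl\modl\fl(\modl)_{\morone}$ kills $\shift T'$, since its image is $\cat(T,\shift T')\simeq S_k$ and $S_k\fl 0$ lies in $\morone$, but it does not kill $T$: the image of $T$ is $\Lambda$, which is nonzero in $(\modl)_{\morone}\simeq\ec/\add S_k$ whenever $\tbar\neq 0$. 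The dual composite $\cat\fl\modl'\fl(\modl')_{\mortwo}$ does the opposite: it kills $T$, whose image $D\cat(T,\shift T')\simeq S_k^\ast$ becomes zero because $0\fl S_k^\ast$ lies in $\mortwo$, but it does not kill $\shift T'$, whose image is the injective cogenerator $D\Lambda'$. Hence $T\fl 0$ lies in exactly one of the two saturations, and consequently there can be no equivalence between the two localisations commuting with the localisation functors from $\cat$. Any correct proof must construct an equivalence that genuinely moves objects. This is what the paper does: it identifies $(\modl)_{\morone}\simeq\cbart/(\shift T')$ and $(\modl')_{\mortwo}\simeq\cbart/(T)$ (Theorem~\ref{theorem: fbar equivalence}), and then proves $\cbart/(\shift T')\simeq\cbart/(T)$ in Theorem~\ref{theorem: equivalence of categories} by means of adjoint approximation functors combined with the Serre functor; on objects that equivalence is essentially $X\mapsto \tau^{-1}L_0X$, very far from the identity, and this is exactly where the Serre functor hypothesis enters.

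Two further points. Your appeal to a second exchange triangle $T_k\fl B'\fl T_k^\ast\fl\shift T_k$ with $B'\in\add\tbar$ is also unjustified: for a non-maximal rigid object the mutation need not be $2$-periodic, and indeed if such a triangle existed then \cite[Corollary 4.3]{Yang-ClusterTube} would already give a genuine nearly-Morita equivalence between $\Lambda$ and $\Lambda'$, which the $A_4$ example in the introduction shows to be false; so in precisely the situations Theorem~\ref{theoremA} is designed for, that triangle does not exist. Finally, your fraction calculi are reversed: the cancellation axiom for roofs fails for $\morone$ (take $f=\mathrm{id}_{S_k}$, $g=0$ and $s\colon S_k\fl 0$ in $\morone$; no $t\in\morone$ satisfies $ft=gt$), whereas $\morone$ does admit a calculus of left fractions via pushouts, and dually $\mortwo$ admits a calculus of right fractions via pullbacks. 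This slip is harmless only because, as the paper observes, the set-theoretic issues can be avoided altogether by exhibiting each localisation as a subquotient of $\cat$.
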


This result is not completely satisfactory since it does not resemble nearly-Morita equivalence. The following remark will help in restating the Theorem in a form which looks more like nearly-Morita equivalence.

Let $M\in\operatorname{mod}\Lambda$.
If there is a short exact sequence
$0\fl S_k \fl L \stackrel{f}{\fl} M \fl 0$,
the morphism $f$ belongs to $\morone$. Therefore
the objects $L$ and $M$ become isomorphic in the localisation
$(\operatorname{mod}\Lambda)_{\morone}$.
This suggests that the objects having non-split
extensions with $S_k$ can be removed from $\operatorname{mod}\Lambda$
without changing the localisation.
We thus define $\ec$ to be the full subcategory of $\operatorname{mod}\Lambda$
whose objects $M$ satisfy $\ext^1_\Lambda(M,S_k) = 0$.
Dually, let $\ec'$ be the full subcategory of $\operatorname{mod}\Lambda'$
whose objects $N$ satisfy $\ext^1_{\Lambda'}(S_k^\ast,N) = 0$.

Note that $\ec$ and $\ec'$ are extension-closed in $\operatorname{mod}\Lambda$ (respectively,
$\operatorname{mod}\Lambda'$) and are thus exact categories.

\begin{thmx} \label{theoremB}
There is an equivalence of categories:
$$(\operatorname{mod}\Lambda)_{\morone}\simeq \ec / \add S_k.$$ Dually, there is an equivalence of categories:
$$(\operatorname{mod}\Lambda')_{\mortwo}\simeq \ec' / \add S_k^\ast.$$
\end{thmx}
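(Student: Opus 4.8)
The plan is to prove the first equivalence; the second follows by the dual argument, replacing $\operatorname{mod}\Lambda$, $\morone$, $S_k$, epimorphisms and kernels throughout by $\operatorname{mod}\Lambda'$, $\mortwo$, $S_k^\ast$, monomorphisms and cokernels. Throughout I will use that $S_k$ has no self-extensions, equivalently $\add S_k\subseteq\ec$, which is what makes the additive quotient $\ec/\add S_k$ meaningful; I will also use that every $M$ admits a universal extension by $S_k$, which exists since $\ext^1_\Lambda(M,S_k)$ is finite-dimensional.

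I would exhibit the equivalence as a pair of mutually quasi-inverse functors. Write $L\colon\operatorname{mod}\Lambda\to(\operatorname{mod}\Lambda)_\morone$ for the localisation functor. The first step is to record that $L$ kills morphisms factoring through $\add S_k$: if $f,g\colon M\to N$ differ by such a morphism then $U:=\im(f-g)$ lies in $\add S_k$, the quotient $N\defl N/U$ lies in $\morone$ and equalises $f$ and $g$, and since it becomes invertible we get $L(f)=L(g)$. In particular every object of $\add S_k$ becomes zero (as $S_k\defl 0$ is in $\morone$), and $L$ restricted to $\ec$ annihilates $[\add S_k]$, hence descends to a functor $\Phi\colon\ec/\add S_k\to(\operatorname{mod}\Lambda)_\morone$.

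For the quasi-inverse, for each $M$ I form the universal extension $0\to S_k^{\,d}\to E_M\to M\to 0$ with $d=\dim\ext^1_\Lambda(M,S_k)$, so that $E_M\in\ec$. Since a morphism $M\to N$ lifts to a morphism $E_M\to E_N$ (comparing the pullback class with the universal class of $E_M$), uniquely modulo a map factoring through the kernel $S_k^{\,d}$, the assignment $M\mapsto E_M$ gives a well-defined functor $G\colon\operatorname{mod}\Lambda\to\ec/\add S_k$. The key point is that $G$ inverts $\morone$. Given $s\colon M\defl N$ with $\ke s\in\add S_k$, the composite $E_M\to M\xrightarrow{s}N$ is again an epimorphism whose kernel is an extension of two objects of $\add S_k$, hence (using $\ext^1_\Lambda(S_k,S_k)=0$) lies in $\add S_k$, and whose middle term $E_M$ lies in $\ec$. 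The crux is then the following lemma: any short exact sequence $0\to S_k^{\,c}\to X\to N\to 0$ with $X\in\ec$ is a possibly non-minimal universal extension, so that $X\cong E_N\oplus S_k^{\,c-d}$. This holds because $X\in\ec$ forces the connecting map $\homph_\Lambda(S_k^{\,c},S_k)\to\ext^1_\Lambda(N,S_k)$ to be surjective, whence one splits off the kernel of this map as a sum of copies of $S_k$. Applying the lemma to $E_M\defl N$ yields $E_M\cong E_N$ in $\ec/\add S_k$, so $G$ sends $\morone$ to isomorphisms and factors through $L$ as a functor $\Psi\colon(\operatorname{mod}\Lambda)_\morone\to\ec/\add S_k$.

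It then remains to produce natural isomorphisms $\Psi\Phi\cong\operatorname{id}$ and $\Phi\Psi\cong\operatorname{id}$. For $E\in\ec$ one has $\ext^1_\Lambda(E,S_k)=0$, so $d=0$ and $\Psi\Phi(E)=E_E=E$; for arbitrary $M$ the universal extension map $E_M\to M$ lies in $\morone$, hence is invertible in the localisation, giving $\Phi\Psi(M)\cong M$ naturally. I expect the main obstacle to be the lemma on universal extensions, together with the verification that $G$ is well defined on morphisms modulo $\add S_k$; both rest on the vanishing of $\ext^1_\Lambda(S_k,S_k)$ and on the defining property $\ext^1_\Lambda(-,S_k)=0$ on $\ec$, which together ensure that extensions by objects of $\add S_k$ with middle term in $\ec$ are controlled up to $\add S_k$-summands. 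It is worth emphasising that $\morone$ does not admit a calculus of fractions on either side, so the roof description of morphisms is unavailable and the explicit quasi-inverse $\Psi$ is what makes the argument go through.
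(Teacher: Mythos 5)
Your proof is correct, but it takes a genuinely different route from the paper's. The paper deduces Theorem~\ref{theoremB} from Corollary~\ref{corollary: gentheoremB}: it identifies $(\modl)_{\morone}$ with $\cbart/(\shift T')$ via Theorem~\ref{theorem: fbar equivalence} (whose proof --- density, fullness and faithfulness of $F$, via Lemmas~\ref{lemma: Z in CT}, \ref{lemma: B iff XT} and~\ref{lemma: Stilde and SB0} --- is the bulk of the Localisation section), identifies $\ec$ with $\cbart/(\shift\tbar)$ via Lemma~\ref{lemma: ec}, and then only has to check that for an acyclic cluster category and $R=T_k$ indecomposable one has $\bc=\add S_k$, so that $\sc_{\bc,0}=\morone$, and $\cat(T,\shift R^\ast)\simeq S_k$. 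So in the paper all the real work happens inside the triangulated category $\cat$, with $\cbart$ as the bridge between the two sides. You never leave $\operatorname{mod}\Lambda$: your universal extension $E_M$ plays the role of the approximation $R_0X$, and your key lemma (an extension of $N$ by $S_k^c$ with middle term in $\ec$ is a universal extension plus a split part) replaces the approximation-theoretic arguments. Your individual steps check out: existence of the lift $E_M\to E_N$ because $\ext^1_\Lambda(E_M,S_k)=0$, its uniqueness modulo $(\add S_k)$, the key lemma via surjectivity of the connecting map and a change of basis of $S_k^c$, and the two natural isomorphisms. What you gain is a short, self-contained, purely module-theoretic argument that also explains conceptually why the subcategory $\ec$ appears. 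What you lose is generality, and there is one point you must justify rather than assume: the vanishing $\ext^1_\Lambda(S_k,S_k)=0$, on which both your construction of $E_M\in\ec$ and your key lemma depend. It does hold here, because an indecomposable rigid object in an acyclic cluster category has endomorphism ring $k$, hence there is no loop at the corresponding vertex of the quiver of $\Lambda$ --- this is exactly how the paper obtains $\cat(T,\shift R^\ast)\simeq S_k$ in its proof --- but it is a fact about acyclic cluster categories, not a formality. In a category where loops occur (e.g.\ cluster tubes), your argument genuinely breaks down, whereas the paper's machinery still yields Corollary~\ref{corollary: gentheoremB}, with $\add S_k$ replaced by $\add\cat(T,\shift R^\ast)$ and $\morone$ by $\sc_{\bc,0}$; moreover that machinery is reused for Theorem~\ref{theoremA} and for Theorem~\ref{theorem: more localisations}, so the paper's detour through $\cat$ is not wasted effort.
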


Combining the two theorems gives the following.

\begin{cornonumber}
There is an equivalence of categories:
$$\ec / \add S_k \simeq \ec' / \add S_k^\ast.$$
\end{cornonumber}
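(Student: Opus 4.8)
The plan is to compose the equivalences supplied by Theorems~\ref{theoremA} and~\ref{theoremB}; all of the substantive work has already been done in establishing those two statements, and the corollary is a purely formal consequence. Concretely, I would first invoke Theorem~\ref{theoremB} to identify $\ec/\add S_k$ with the localisation $(\operatorname{mod}\Lambda)_{\morone}$, then apply Theorem~\ref{theoremA} to cross over to $(\operatorname{mod}\Lambda')_{\mortwo}$, and finally use the dual half of Theorem~\ref{theoremB} to identify this last localisation with $\ec'/\add S_k^\ast$. Chaining the three equivalences gives
$$\ec/\add S_k \simeq (\operatorname{mod}\Lambda)_{\morone} \simeq (\operatorname{mod}\Lambda')_{\mortwo} \simeq \ec'/\add S_k^\ast,$$
which is exactly the assertion.

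Because every arrow in this chain is already known to be an equivalence of categories, there is no real obstacle at this step: a composite of equivalences is again an equivalence, and the statement asks only for the existence of an equivalence rather than for compatibility with any prescribed functor, so no further coherence check is needed. The genuine difficulty lies entirely upstream, in the proofs of Theorems~\ref{theoremA} and~\ref{theoremB}---notably in matching the localisation $(\operatorname{mod}\Lambda)_{\morone}$ with the additive quotient $\ec/\add S_k$, which is where the behaviour of the simples $S_k$ and $S_k^\ast$ under localisation must be controlled.

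If one wanted more than a bare existence statement, the plan would instead be to track an object explicitly through the three functors above and record the resulting composite; but since the corollary only asserts that an equivalence exists, transitivity of the relation ``is equivalent to'' on categories is all that is required, and the proof reduces to a single line.
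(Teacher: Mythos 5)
Your proposal is correct and is exactly the paper's own argument: the corollary is stated immediately after Theorem~\ref{theoremB} with the one-line justification ``combining the two theorems,'' i.e.\ chaining $\ec/\add S_k \simeq (\operatorname{mod}\Lambda)_{\morone} \simeq (\operatorname{mod}\Lambda')_{\mortwo} \simeq \ec'/\add S_k^\ast$ via Theorem~\ref{theoremB}, Theorem~\ref{theoremA}, and the dual half of Theorem~\ref{theoremB}. Your observation that only bare existence of an equivalence is claimed, so transitivity suffices, is also right.
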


This resembles nearly-Morita equivalence except that,
unlike in the cluster-tilting case, one has to restrict to an exact subcategory before killing the simple.

Unfortunately, these statements do not specialise to a nearly-Morita equivalence in the cluster-tilting case:
In the setup of~\cite{BMR-CTA}, we obtain
a weaker statement.

The proofs of Theorems A and B are in Subsection~\ref{ssection: localisation} (but note that the proofs
appear in reverse order to the above).
In fact, we will prove more general results than those
mentioned above.
First, we only assume the triangulated category $\cat$ to be Krull--Schimdt, with a Serre functor.
Second, we allow mutations at non-indecomposable summands.
Our results hold, in particular, in any triangulated category in the following list (whose items overlap):
\begin{itemize}
 \item Hom-finite generalised higher cluster categories (\cite{Amiot-ClusterCategories}, \cite{Guo-HigherClusterCategories});
 \item stable categories of maximal Cohen--Macaulay modules over an odd dimensional
isolated hypersurface singularity (\cite{BIKR});
 \item cluster tubes (\cite{BKL}, \cite{BMV}...);
 \item (higher) cluster categories of type $A_\infty$ (\cite{HJ-Ainfinity}, \cite{HJ-HigherClusterCategories});
 \item the triangulated orbit categories listed in~\cite{Amiot-TriangulatedCategories};
 \item stable categories constructed from preprojective algebras in~\cite{GLS-ClusterAlgebraStructures}...
\end{itemize}

\section*{Acknowledgements}
The second-named author would like to thank the algebra team of the university of Leeds
for a pleasant atmosphere when he was a postdoc there.
Both authors are indebted to Apostolos Beligiannis for a preliminary version of~\cite{Beligiannis} which inspired the proofs of section~\ref{ssection: adjunctions}, and to
Karin Baur who hosted their visits to the Institute for Mathematical Research (FIM) at the ETH Z\"{u}rich, where this project was initiated.

\section{Setup and notation}
\label{s:setup}

We fix a field $k$, and a Krull--Schmidt, $k$-linear, Hom-finite, triangulated category $\cat$,
with suspension functor $\shift$.
An object $X$ in $\cat$ is called \emph{rigid} if
$\ext^1_\cat(X,X) = 0$, where we write $\ext^1_\cat(X,Y)$ for $\cat(X,\shift Y)$. We write $X^{\perp}$ for the
right Hom-perp of $X$, i.e.\ the subcategory of $\cat$
on objects $Y$ such that $\cat(X,Y)=0$. Note that this
notation differs from that used in~\cite{BMloc1}, which
we often cite, but here the Hom-perpendicular categories
play a key role so we use a different notation.

Let $T\in\cat$ be a basic rigid object. Let $R$ be a direct summand of $T$ and
write $T = \tbar\oplus R$. 
Let $T'$ be the rigid object obtained from $T$
by replacing $R$ by the negative shift $R^{\ast}$
of the cone
of a minimal right $\add\tbar$-approximation of $R$. We have a triangle
$R^\ast \fl B \fl R \fl \shift R^\ast$, with
$B\in\add\tbar$, $B\fl R$ a minimal right $\add\tbar$-approximation, and $T' = \tbar\oplus R^\ast$.
By~\cite[Lemma 6.7]{BMRRT}, $\Sigma R^*\in \tbar^{\perp}$
and $R^*$ is rigid, so that $T'$ is again rigid. By~\cite[Proposition 2.6(1)]{IY} and~\cite[Lemma 6.5]{BMRRT},
$R$ and $R^*$ are basic and have the same number
of indecomposable direct summands.
We keep these assumptions throughout the paper.

In some statements, we will assume additionally that
$\cat$ has a Serre functor.

We also need some more notation.
If $X$ is an object in $\cat$, we write 
$(X)$ for the ideal of morphisms factoring through the 
additive subcategory $\add X$ generated by $X$.
All modules considered are left modules.

We denote by $\ct$ the full subcategory
of $\cat$ whose objects are the cones of morphisms
$T_1\rightarrow T_0$, where $T_0,T_1\in \add T$,
and by $\cbart$ the full subcategory of $\cat$ whose
objects are the cones of morphisms $\overline{T_1}\rightarrow T_0$, where $T_0\in \add T$ and $\overline{T_1}\in \add \overline{T}$.

More generally, for any two full subcategories
$\ac$ and $\bc$ of $\cat$, we use the notation
$\ac\ast\bc$ for the full subcategory whose objects $X$
are extensions of an object in $\bc$ by an object in $\ac$ (i.e. $X$ appears in a triangle $A \fl X \fl B \fl \shift A$ with $A\in \ac$ and $B\in\bc$).
It follows from the octahedral axiom that the operation
$\ast$ is associative.
By abuse of notation, if $A,B$ are objects in $\cat$,
we will write $A\ast B$ for $\add A \ast \add B$.

Thus one could also define $\ct$ and $\cbart$ by:
$\ct = T\ast\shift T$ and $\cbart = T\ast\shift\tbar$.

\vspace{.2cm}
\emph{Remark}: Our results hold in the more general setup of rigid subcategories:
replace $\add T$ by a rigid subcategory $\tc$,
with the following additional assumptions: $\tc$ is contravariantly finite,
$\overline{\tc}$ is functorially finite and $\tc'$ is covariantly finite.
This requires changing the functors
of the form $\cat(T,-)$ taking values in the category
$\operatorname{mod}\operatorname{End}_\cat(T)^\text{op}$
into functors of the form $\cat(?,-)|_{\tc}$, taking values in
$\operatorname{mod}\tc$, and all references to~\cite{BMloc1} by references
to~\cite{Beligiannis}.

\section{Pseudo-Morita equivalence}

\subsection{Adjunctions}\label{ssection: adjunctions}

The methods used in this subsection are inspired by~\cite{Beligiannis,BMloc1,BMloc2}, and much resemble
results in~\cite[Section 3]{Nakaoka-Twin}.
Indeed, \cite[Corollary 3.8]{Nakaoka-Twin}
applied to the twin cotorsion pair
$(\shift\tbar,\tbar^\perp), (\shift T', T'^\perp)$
(where we use the notation from Subsection~\ref{ssection: main result})
gives the existence of a right adjoint to the fully faithful functor
$\cbart/(\shift T') \gfl \cat / (\shift T')$ from which it is possible
to deduce our Proposition~\ref{proposition: right adjoint}.
For convenience of the reader, we nonetheless include a complete proof.

The subcategory $\ct$ is known to be contravariantly 
finite, by~\cite[Lemmas 3.3 and 3.6]{BMloc1}. An 
analogous proof gives Lemma~\ref{lemma: cbart 
contravariantly finite} below. We first need a
definition.

\begin{defi}
Let $\sc$ be the set of morphisms $X\stackrel{f}{\gfl}Y$ in $\cat$
such that for any triangle $Z \fl X \stackrel{f}{\fl} Y \stackrel{g}{\fl}\shift Z$,
we have $Z\in\tbar^\perp$ and $g\in (T^\perp)$.
\end{defi}

\begin{lemma}
\label{lemma: cbart contravariantly finite}
\begin{itemize}
\item[(a)] Let $X'\stackrel{s}{\fl} X$ be a morphism in $\sc$ with
$X'\in \cbart$. Then $s$ is a right $\cbart$
approximation of $X$.
\item[(b)]
Each object $X$ in $\cat$ has a right $\cbart$-approximation $R_0X\stackrel{\eta_X}{\fl} X$
lying in $\sc$.
\item[(c)]
The category $\cbart$ is a contravariantly finite
subcategory of $\cat$.
\end{itemize}
\end{lemma}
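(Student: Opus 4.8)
The plan is to prove the three parts in the order (a), (b), (c), since (c) is an immediate consequence of (b), and (b) will use the approximation characterisation from (a) together with an explicit construction of the approximating object.

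For part (a), I would start with a morphism $s\colon X'\fl X$ in $\sc$ with $X'\in\cbart$, and complete it to a triangle $Z\fl X'\stackrel{s}{\fl}X\stackrel{g}{\fl}\shift Z$. By definition of $\sc$ we have $Z\in\tbar^\perp$ and $g\in(T^\perp)$. To check that $s$ is a right $\cbart$-approximation, I would take an arbitrary $W\in\cbart$ and a morphism $h\colon W\fl X$, and try to factor $h$ through $s$. Applying $\cat(W,-)$ to the triangle gives an exact sequence, so $h$ lifts through $s$ precisely when the composite $g\circ h\colon W\fl\shift Z$ vanishes. Since $g\in(T^\perp)$, it factors as $W\fl T^\perp$-object $\fl\shift Z$; the key point will be to show that $\cat(W,-)$ kills morphisms from $T^\perp$ for $W\in\cbart$, i.e.\ that $\cbart$ is left-orthogonal to the relevant part of $T^\perp$. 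Concretely, since $W\in\cbart = T\ast\shift\tbar$ sits in a triangle with outer terms in $\add T$ and $\shift\tbar$, and $g$ factors through $T^\perp$, I expect $g\circ h=0$ to follow from $\cat(\add T,T^\perp)=0$ and $\cat(\shift\tbar,\cdot)$ vanishing suitably after using $Z\in\tbar^\perp$. This orthogonality bookkeeping is the technical heart of (a).

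For part (b), I would construct the approximation explicitly, mimicking the cited proof for $\ct$ in \cite[Lemmas 3.3 and 3.6]{BMloc1}. Given $X\in\cat$, first take a right $\add T$-approximation $T_0\fl X$ and complete to a triangle $K\fl T_0\fl X\fl\shift K$; the standard argument shows $K$ lands in a controlled place. Then I would take a right $\add\tbar$-approximation of $\shift^{-1}$ of the relevant term, splice the two triangles via the octahedral axiom, and read off an object $R_0X\in\cbart = T\ast\shift\tbar$ together with a morphism $\eta_X\colon R_0X\fl X$. The remaining work is to verify that $\eta_X\in\sc$: completing $\eta_X$ to a triangle $Z\fl R_0X\stackrel{\eta_X}{\fl}X\stackrel{g}{\fl}\shift Z$, I must check both $Z\in\tbar^\perp$ and $g\in(T^\perp)$. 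The condition $Z\in\tbar^\perp$ should come from the minimality/approximation property of the $\add\tbar$-approximation used, and $g\in(T^\perp)$ from the fact that $T_0\fl X$ was an $\add T$-approximation, so that the connecting map factors through the cone of that approximation, which lies in $T^\perp$. Part (a) then upgrades $\eta_X$ from merely a morphism in $\sc$ to an actual right $\cbart$-approximation.

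Part (c) is then immediate: (b) produces a right $\cbart$-approximation of every object of $\cat$, which is exactly the statement that $\cbart$ is contravariantly finite. The main obstacle I anticipate is the orthogonality argument in (a)---pinning down exactly why $\cat(\cbart,(T^\perp))=0$, since one must carefully combine $Z\in\tbar^\perp$ with the two-step extension structure of $\cbart$, and the interplay between the ideal $(T^\perp)$ of morphisms factoring through $T^\perp$ and the Hom-perp condition needs to be handled precisely. The octahedral splicing in (b) is routine but must be set up so that the two defining conditions of $\sc$ fall out cleanly from the two approximation steps.
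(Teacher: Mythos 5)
Your proposal is correct and takes essentially the same route as the paper's proof: part (a) by completing $s$ to a triangle and showing $g\circ h=0$ via the defining triangle of the test object in $\cbart$ together with the two orthogonality facts ($\cat(\add T,\,T^\perp\text{-objects})=0$, and maps from $\add\shift\tbar$ to $\shift Z$ vanish since $Z\in\tbar^\perp$); part (b) by the same two-step construction (minimal right $\add T$-approximation, then a right $\add\tbar$-approximation of the cocone, spliced by the octahedral axiom), with the two conditions defining $\sc$ verified exactly as you indicate and part (a) then upgrading $\eta_X$ to a right $\cbart$-approximation; and part (c) immediately from (b). There are no gaps in the outline.
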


\begin{proof}
Suppose that $X' \stackrel{s}{\fl} X$ is a
morphism in $\sc$ with $X'\in \cbart$.
Thus, we may complete $s$ to a triangle:
$$X' \stackrel{s}{\fl} X \stackrel{g}{\fl}
\shift Z \fl \shift X'$$
where $g$ factors through $T^{\perp}$ and
$\shift Z$ lies in $(\shift T)^{\perp}$

Since $X'\in \cbart$, there is a triangle
$U_0 \stackrel{p}{\fl} X' \fl \shift \ubar_1 \fl \shift U_0$,
with $U_0\in \add T$ and $\ubar_1 \in \add \tbar$.
Let $X'\stackrel{u}{\fl} X$ be an arbitrary morphism
in $\cat$.
Since $g$ factors through $T^{\perp}$ and $U_0\in \add T$, we have $gup=0$ and therefore have the following commutative diagram whose rows are triangles:
$$
\xymatrix{
U_0 \dr^p \basp_w & X'\bas^u \dr^\eta \bgp_{u'}
& \shift \ubar_1 \basp^v \dr & \shift U_0  \basp^{\shift w} \\
X' \dr_{s} & X \dr_g & \shift Z \dr & \shift X'
}
$$
Moreover, $\shift Z$ lies in $(\shift \tbar)^\perp$
and $\shift \ubar_1$ is in $\add \shift\tbar$, so
the composition $gu = v\eta$ is zero. Thus, there is a
morphism $u'$ such that $u=su'$. Part (a) is
shown.

For part (b), let $X\in\cat$. Let $T_0^X \fl X$ be a minimal right $\add T$-approximation of $X$.
Complete it to a triangle
$Y \fl T_0^X \fl X \fl \shift Y$.
Let $\tbar_1^Y \fl Y$ be a minimal right
$\add \tbar$-approximation of $Y$.
Applying the octahedral axiom, we obtain the
following diagram:
$$
\xymatrix{
\tbar_1^Y \dreg \bas & \tbar_1^Y \bas & & \\
Y \bas \dr & T_0^X \bas \dr & X \baseg \dr & \shift Y \bas \\
Z \bas \dr^f & R_0 X \bas \dr^{\eta_X} & X \dr^g & \shift Z \\
\shift \tbar_1^Y \dreg & \shift \tbar_1^Y 
}
$$
Applying the functors $\cat(T,-)$ and $\cat(\tbar,-)$
to the triangles above shows that
$\shift Y \in T^\perp$ and $Z\in\tbar^\perp$.
Note that $R_0 X \in \cbart$.
Then, by part (a), $\eta_X$ is a right $\cbart$-approximation of $X$, and part (b) is shown.
Part (c) follows immediately from part (b).
\end{proof}

The following remark is stated as a lemma since it will be used several
times in the paper.
\begin{lemma}\label{lemma: factors}
Let $X\stackrel{f}{\gfl}Y$ be a morphism in $\cat$ with $X\in\cbart$ and
assume that $f$ factors through $\tbar^\perp$ in $\cat$.
Then $f$ factors through $\tbar^\perp \cap \cbart$.
\end{lemma}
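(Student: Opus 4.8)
The plan is to factor $f$ through the canonical right $\cbart$-approximation supplied by Lemma~\ref{lemma: cbart contravariantly finite}. By hypothesis $f$ factors through $\tbar^\perp$, so I would write $f = ba$ with $a\colon X\fl W$ and $b\colon W\fl Y$ for some $W\in\tbar^\perp$. Applying Lemma~\ref{lemma: cbart contravariantly finite}(b) to $W$, I would fix a right $\cbart$-approximation $\eta_W\colon R_0W\fl W$ that moreover lies in $\sc$.

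Since $X\in\cbart$ and $\eta_W$ is a right $\cbart$-approximation of $W$, the morphism $a$ factors as $a=\eta_W a'$ for some $a'\colon X\fl R_0W$. Thus $f=(b\eta_W)a'$ factors through $R_0W$, and it remains only to check that $R_0W$ lies in $\tbar^\perp$; then $R_0W\in\tbar^\perp\cap\cbart$ and the factorisation is of the required type.

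For this last point I would complete $\eta_W$ to a triangle $Z\fl R_0W\stackrel{\eta_W}{\fl}W\fl\shift Z$. Because $\eta_W\in\sc$, its definition guarantees $Z\in\tbar^\perp$. Applying the functor $\cat(\tbar,-)$ yields the exact sequence $\cat(\tbar,Z)\fl\cat(\tbar,R_0W)\fl\cat(\tbar,W)$, whose two outer terms vanish: the left since $Z\in\tbar^\perp$, the right since $W\in\tbar^\perp$. Hence $\cat(\tbar,R_0W)=0$, that is, $R_0W\in\tbar^\perp$, as needed.

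I expect no serious obstacle here: the work has been front-loaded into Lemma~\ref{lemma: cbart contravariantly finite}, which produces an approximation whose cone $Z$ already sits in $\tbar^\perp$. The single point requiring care is to invoke the version of the approximation that lies in $\sc$ rather than an arbitrary right $\cbart$-approximation, since it is exactly the built-in condition $Z\in\tbar^\perp$ that — together with $W\in\tbar^\perp$ — squeezes $R_0W$ between two vanishing Hom-groups.
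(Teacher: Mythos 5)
Your proof is correct, but it takes a genuinely different route from the paper's. The paper approximates the \emph{source}: it takes a minimal right $\add\tbar$-approximation $\tbar_0 \stackrel{u}{\fl} X$, completes it to a triangle $\tbar_0 \stackrel{u}{\fl} X \stackrel{v}{\fl} Z \fl \shift\tbar_0$, observes that $fu=0$ (since $f$ factors through $\tbar^\perp$ and $\tbar_0\in\add\tbar$), so that $f$ factors through $v$, and then checks that $Z\in\tbar^\perp$ (by applying $\cat(\tbar,-)$) and that $Z\in\cbart$, the latter via the computation $\cbart\ast\add\shift\tbar=\add T\ast(\add\shift\tbar\ast\add\shift\tbar)=\cbart$, which uses associativity of $\ast$ and rigidity of $\tbar$. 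You instead approximate the \emph{intermediate object}: you replace $W\in\tbar^\perp$ by its right $\cbart$-approximation $R_0W$ supplied by Lemma~\ref{lemma: cbart contravariantly finite}(b), lift $a$ through $\eta_W$ using $X\in\cbart$, and verify $R_0W\in\tbar^\perp$ by trapping $\cat(\tbar,R_0W)$ between $\cat(\tbar,Z)=0$ (the $\sc$-condition built into part (b)) and $\cat(\tbar,W)=0$. This is legitimate: Lemma~\ref{lemma: cbart contravariantly finite} precedes the statement in question and its proof does not use it, so there is no circularity, and each step you cite (the approximation property, the definition of $\sc$, the long exact sequence) is applied correctly. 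The trade-off is that the paper's factoring object $Z$ is canonical in $X$ (independent of the chosen factorisation of $f$) and its proof showcases the closure property $\cbart\ast\add\shift\tbar=\cbart$, in the same spirit as Lemma~\ref{lemma: compute perps}; your argument avoids the $\ast$-calculus entirely and reuses the already-established approximation machinery, at the price of depending on the chosen intermediate object $W$.
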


\begin{proof}
Let $\tbar_0 \stackrel{u}{\gfl} X$ be a minimal right
$\tbar$-approximation of $X$ in $\cat$. Complete
the morphism $u$ to a triangle $\tbar_0 \stackrel{u}{\fl}
X \stackrel{v}{\fl} Z \fl \shift \tbar_0$
in $\cat$. As shown in~\cite{BMRRT} (apply the functor $\cat(\tbar,-)$
to the triangle above) the cone $Z$ belongs to $\tbar^\perp$.
Moreover, the composition $fu$ vanishes since $f$ factors through
$\tbar^\perp$ and it follows that the morphism $f$ factors through $v$.
It remains to be checked that the object $Z$ lies in $\cbart$.
The triangle above shows that $Z\in\cbart\ast\add\shift\tbar$,
and we have:
\begin{eqnarray*}
 \cbart\ast \add\shift\tbar & = &
(\add T\ast \add\shift\tbar)\ast \add\shift \tbar \\
 & = & \add T \ast ( \add\shift\tbar\ast \add\shift \tbar) \\
 & = & \add T \ast \add\shift\tbar,
\end{eqnarray*}
where the last equality holds since $\shift \tbar$ is rigid.
\end{proof}

The following lemma, which is used in the proof
of Proposition~\ref{proposition: right adjoint},
is a particular case of \cite[IV.1 Theorem 2 (ii)]{MacLane}.
\begin{lemma}\label{lemma: MacLane}
Let $\bc$ be a category, and let $\ac$ be a full subcategory of $\bc$.
Suppose that, for any $B\in\bc$, there is an object $G_0B\in\ac$
and a morphism $G_0B\stackrel{\eta_B}{\gfl}B$ such that for all
$A\stackrel{f}{\gfl} B$ with $A\in\ac$, the morphsim $f$ lifts
uniquely through $\eta_B$. Then the inclusion $\ac\subseteq\bc$
has a right adjoint $G : \bc\fl\ac$ such that, for all $B\in\bc$,
$GB = G_0B$.
\end{lemma}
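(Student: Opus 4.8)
Given a full subcategory $\ac \subseteq \bc$, if every $B \in \bc$ admits an object $G_0B \in \ac$ and a morphism $\eta_B: G_0B \to B$ with the universal property that every $f: A \to B$ (with $A \in \ac$) factors *uniquely* through $\eta_B$, then the inclusion has a right adjoint $G$ with $GB = G_0B$.

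This is the standard "objectwise couniversal arrow ⟹ adjoint functor" construction. Let me sketch the proof.

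The plan:
1. **Construct $G$ on objects:** Set $GB = G_0B$ for each $B \in \bc$.
2. **Construct $G$ on morphisms:** Given $\beta: B \to B'$ in $\bc$, consider the composite $G_0B \xrightarrow{\eta_B} B \xrightarrow{\beta} B'$. Since $G_0B \in \ac$ (the inclusion being full), this is a morphism from an object of $\ac$ to $B'$, so by the universal property it factors *uniquely* through $\eta_{B'}$. Define $G\beta: G_0B \to G_0B'$ to be this unique lift, i.e. $\eta_{B'} \circ G\beta = \beta \circ \eta_B$.
3. **Check functoriality:** Use uniqueness to verify $G(\mathrm{id}_B) = \mathrm{id}_{G_0B}$ and $G(\beta'\beta) = G\beta' \circ G\beta$. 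Both follow because two morphisms into $G_0B'$ (resp. $G_0B''$) that become equal after postcomposing with $\eta$ must be equal.
4. **Establish the adjunction:** Show the natural bijection $\bc(\iota A, B) \cong \ac(A, GB)$ where $\iota: \ac \hookrightarrow \bc$. The universal property says precisely that $\eta_B$ induces a bijection $\ac(A, G_0B) \xrightarrow{\eta_B \circ -} \bc(A, B)$. Naturality in $A$ and $B$ follows from the defining equation of $G\beta$ and uniqueness.
5. **Identify $\eta$ as the counit:** The $\eta_B$ assemble into the counit of the adjunction.

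The main subtlety is just bookkeeping with the uniqueness clause—it's what makes $G$ well-defined on morphisms and forces functoriality.

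Here's my proposed proof:

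=== PROOF PROPOSAL (LaTeX) ===

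The plan is to build the right adjoint $G$ objectwise, exactly as prescribed, and then use the uniqueness clause in the hypothesis to define $G$ on morphisms and to verify all the required coherences. On objects, I set $GB = G_0B$ for every $B\in\bc$. The morphisms $\eta_B$ will turn out to be the components of the counit of the adjunction.

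To define $G$ on morphisms, let $\beta\colon B\fl B'$ be a morphism in $\bc$. Since the inclusion is full, $G_0B$ is an object of $\ac$, and so the composite $\beta\eta_B\colon G_0B\fl B'$ is a morphism out of an object of $\ac$. By hypothesis, it lifts \emph{uniquely} through $\eta_{B'}$; I define $G\beta\colon G_0B\fl G_0B'$ to be this unique lift, so that $\eta_{B'}(G\beta) = \beta\eta_B$. Functoriality is then forced by uniqueness: both $G(\mathrm{id}_B)$ and $\mathrm{id}_{G_0B}$ are lifts of $\eta_B$ through $\eta_B$, hence equal; and both $G(\beta'\beta)$ and $(G\beta')(G\beta)$ are lifts of $\beta'\beta\eta_B$ through $\eta_{B''}$, since
\[
\eta_{B''}(G\beta')(G\beta) = \beta'\eta_{B'}(G\beta) = \beta'\beta\eta_B,
\]
hence they coincide. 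Thus $G\colon\bc\fl\ac$ is a well-defined functor.

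It remains to produce a natural bijection $\ac(A,GB)\cong\bc(A,B)$ for $A\in\ac$ and $B\in\bc$. The hypothesis states precisely that postcomposition with $\eta_B$ gives a bijection
\[
\eta_B\circ(-)\colon\ac(A,G_0B)\stackrel{\sim}{\fl}\bc(A,B),
\]
its injectivity and surjectivity being the uniqueness and existence of lifts, respectively. Naturality in $B$ follows from the defining equation $\eta_{B'}(G\beta)=\beta\eta_B$, and naturality in $A$ is immediate from associativity of composition. This establishes the adjunction with $\eta$ as counit.

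The only point requiring care is the verification that $G$ respects composition, where one must avoid circular reasoning: the argument works precisely because the uniqueness clause lets us identify two morphisms into $G_0B''$ as soon as they agree after postcomposition with $\eta_{B''}$. Everything else is a formal, and entirely routine, unwinding of the universal property.
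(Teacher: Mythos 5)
Your proof is correct and is exactly the standard argument behind the result the paper invokes: the paper gives no proof of this lemma, citing instead \cite[IV.1 Theorem 2 (ii)]{MacLane} and merely recording the definition of $G$ on arrows as the unique lift of $b\eta_B$ through $\eta_{B'}$ --- which is precisely your construction. The only quibble is cosmetic: $G_0B\in\ac$ holds by hypothesis, not ``since the inclusion is full''; fullness is instead what guarantees that the lifts $G\beta$ and the bijection $\ac(A,G_0B)\cong\bc(A,B)$ live in $\ac$ rather than merely in $\bc$.
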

The functor $G$ of the previous lemma is defined on arrows as follows:
For any $B\stackrel{b}{\gfl}B'$ in $\bc$,
$Gb$ is the unique lift through $\eta_{B'}$ of the composition
$b\eta_B$:
\[\xymatrix{
G_0B \ar[r]^{\eta_B} \ar@{-->}[d]_{Gb} & B \ar[d]^b \\
G_0B' \ar[r]_{\eta_{B'}} & B'.
}\]

The following proposition is inspired
by~\cite{Beligiannis}:
\begin{prop}\label{proposition: right adjoint}
The inclusion of $\cbart$ into $\cat$ induces a fully faithful functor
$\frac{\cbart}{\tbar^\perp\cap\cbart} \stackrel{I}{\gfl}
\frac{\cat}{\tbar^\perp}$. Moreover, the functor $I$
admits an additive right adjoint $R$, such that, for all $X$ in $\cat$,
$RX=R_0X$, in the notation
of Lemma~\ref{lemma: cbart contravariantly finite}.
\end{prop}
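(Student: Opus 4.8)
The plan is to verify the two assertions separately, obtaining the right adjoint through Lemma~\ref{lemma: MacLane}.

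First I would check that $I$ is well defined and fully faithful. Since $\tbar^\perp\cap\cbart\subseteq\tbar^\perp$, any morphism between objects of $\cbart$ that factors through $\tbar^\perp\cap\cbart$ also factors through $\tbar^\perp$, so the inclusion $\cbart\subseteq\cat$ descends to a functor $I$ between the quotients. Fullness is automatic: as $\cbart$ is a full subcategory of $\cat$, for $X',X\in\cbart$ every morphism $X'\fl X$ in $\cat$ already lies in $\cbart$, and the quotient functors are surjective on Hom-sets, so $I$ is surjective on morphisms. Faithfulness is exactly Lemma~\ref{lemma: factors}: if $f\colon X'\fl X$ with $X',X\in\cbart$ becomes zero in $\cat/\tbar^\perp$, i.e.\ $f$ factors through $\tbar^\perp$ in $\cat$, then (as $X'\in\cbart$) $f$ factors through $\tbar^\perp\cap\cbart$, hence is already zero in $\cbart/(\tbar^\perp\cap\cbart)$.

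Next I would produce the right adjoint by checking the hypotheses of Lemma~\ref{lemma: MacLane} with $\bc=\cat/\tbar^\perp$ and $\ac$ the image of $I$, which we identify with $\cbart/(\tbar^\perp\cap\cbart)$ using full faithfulness. For $X\in\cat$ I take $R_0X\in\cbart$ together with the right $\cbart$-approximation $\eta_X\colon R_0X\fl X$ lying in $\sc$, supplied by Lemma~\ref{lemma: cbart contravariantly finite}. By full faithfulness it then suffices to show that, for every $X'\in\cbart$, post-composition with $\eta_X$ induces a bijection $(\cat/\tbar^\perp)(X',R_0X)\xrightarrow{\ \sim\ }(\cat/\tbar^\perp)(X',X)$. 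Surjectivity is immediate: since $\eta_X$ is a $\cbart$-approximation and $X'\in\cbart$, any $u\colon X'\fl X$ in $\cat$ already factors as $u=\eta_X\bar u$.

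The main obstacle, and the heart of the argument, is injectivity. Suppose $\bar u\colon X'\fl R_0X$ is such that $\eta_X\bar u$ factors through $\tbar^\perp$ in $\cat$; I must show $\bar u$ itself does. Since $X'\in\cbart$, Lemma~\ref{lemma: factors} lets me factor $\eta_X\bar u=b'a'$ with $a'\colon X'\fl W'$, $b'\colon W'\fl X$ and $W'\in\tbar^\perp\cap\cbart$. As $W'\in\cbart$ and $\eta_X$ is a $\cbart$-approximation, $b'$ lifts as $b'=\eta_X c$ for some $c\colon W'\fl R_0X$, whence $\eta_X(\bar u-ca')=0$. Completing $\eta_X$ to a triangle $Z\stackrel{h}{\fl}R_0X\stackrel{\eta_X}{\fl}X\fl\shift Z$, with $Z\in\tbar^\perp$ because $\eta_X\in\sc$, the vanishing $\eta_X(\bar u-ca')=0$ forces $\bar u-ca'$ to factor through $h$, hence through $Z\in\tbar^\perp$. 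Thus both $ca'$ (through $W'$) and $\bar u-ca'$ (through $Z$) factor through $\tbar^\perp$, and since that class of morphisms is closed under addition, so does $\bar u$. This gives injectivity, and Lemma~\ref{lemma: MacLane} then yields the right adjoint $R$ with $RX=R_0X$; its additivity is automatic, a right adjoint between additive categories being additive. I expect this injectivity verification to be the only delicate point, the fully faithful part being essentially a restatement of Lemma~\ref{lemma: factors}.
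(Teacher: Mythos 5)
Your proposal is correct and follows essentially the same route as the paper: full faithfulness via Lemma~\ref{lemma: factors}, then Lemma~\ref{lemma: MacLane} applied to $\eta_X\colon R_0X\fl X$, with surjectivity from the approximation property and injectivity by combining Lemma~\ref{lemma: factors}, a lift through the approximation $\eta_X$, and the triangle over $\eta_X$ whose cocone lies in $\tbar^\perp$ (the paper's $Y'$, $a$, $b$, $c$, $\al$ are your $W'$, $a'$, $b'$, $c$, $h$). The additivity argument (right adjoint of an additive functor) is also identical.
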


\begin{proof}
The inclusion of $\cbart$ in $\cat$ induces a full functor:
$$
\xymatrix{
\cbart \drm \base_\qbar 				 &
\cat \base^Q					 \\
\cbart/_{\left(\tbar^\perp\cap\cbart\right)} \drp^I &
\cat/_{\left(\tbar^\perp\right)}.
}
$$
We first check that the functor $I$ is faithful.
This amounts to proving that if a morphism in $\cbart$
factors through $\tbar^\perp$ in $\cat$, then it already
factors through $\tbar^\perp$ in $\cbart$. This
follows from Lemma~\ref{lemma: factors}. In what follows,
we will identify $\cbart/(\tbar^\perp\cap\cbart)$ with the image of
$\cbart$ in $\cat/(\tbar^\perp)$.

Next, we prove the existence of a right adjoint. For this,
we use the particular case of~\cite[IV-1 Theorem 2 (ii)]{MacLane}, stated in Lemma~\ref{lemma: MacLane}.

Let $X\in\cat$. Consider the morphism
$R_0X\stackrel{\eta_X}{\gfl}X$ constructed in
Lemma~\ref{lemma: cbart contravariantly finite}.
We claim that $Q\eta_X$ is universal from $I$ to $X$,
in the sense of MacLane, i.e. any morphism in $\cat/(\tbar^\perp)$ from an object in $\cbart$ to $X$
factors uniquely through $Q\eta_X$ in $\cat/(\tbar^\perp)$.
Since $\eta_X$ is a right $\cbart$-approximation of $X$
in $\cat$, its image $Q\eta_X$ is a right
$\cbart/(\tbar^\perp\cap\cbart)$-approximation of $X$
in $\cat/(\tbar^\perp)$, so that we only have to prove uniqueness.

Let $Y\in\cbart$ and let $Y\stackrel{u}{\gfl}R_0X$ be a morphism
in $\cat$ such that $Q(\eta_X u) = 0$. Since the kernel of $Q$
is the ideal $(\tbar^\perp)$ of $\cat$, this means that
the composition $\eta_X u$ factors through $\tbar^\perp$.
Since its source belongs to $\cbart$, Lemma~\ref{lemma: factors}
shows that $\eta_X u$ factors through $\tbar^\perp\cap\cbart$.
Let $Y'\in\tbar^\perp\cap\cbart$ be such that the square:
$$
\xymatrix{
		  &
Y \bas_u \dr^a    &
Y' \bas^b         &
                  \\
Z \dr^\al         &
R_0X \dr^{\eta_X} &
X \dr		  &
\shift Z
}
$$
commutes.
Since $\eta_X$ is a right $\cbart$-approximation,
there exists a morphism $Y'\stackrel{c}{\gfl}R_0X$
with $b = \eta_X c$. We have $\eta_X(u - ca)=0$ so that
the morphism $u-ca$ factors through $\al$. By construction,
$Z\in\tbar^\perp$, therefore we have $u\in(\tbar^\perp)$,
which proves uniqueness.

Finally, we note that the functor $R$ is additive
since it is the right adjoint of the additive
functor $I$.
\end{proof}

If the category admits a Serre functor $S$,
then a dual version of
Proposition~\ref{proposition: right adjoint} will be of interest to us.
We first note that applying to $ST'$ the construction dual to that of $R_0$ gives, for any
$X\in\cat$, a triangle
$Z\stackrel{\al}{\gfl} X \stackrel{\eps_X}{\gfl} L_0 X \gfl \shift Z$,
where $L_0X$ belongs to $\add\susm S\tbar\ast\add ST'$,
$\eps_X$ is a minimal left
$\add\susm S\tbar\ast\add ST'$-approximation,
$\al$ factors through $^\perp(ST')=(T')^\perp$,
and $\shift Z$ belongs to $\tbar^\perp$. 

\begin{prop}\label{proposition: left adjoint}
 Assume that the category $\cat$ has a Serre functor $S$ and let
$\underline{\cat}(T')$ be the full subcategory $\add\susm S\tbar\ast\add ST'$
of $\cat$.
Then the inclusion of $\underline{\cat}(T')$ into $\cat$
induces a fully faithful functor
$\underline{\cat}(T')/(\tbar^\perp)
\stackrel{J}{\gfl} \cat/(\tbar^\perp)$. Moreover,
the functor $J$ admits an additive left adjoint $L$, such that $LX = L_0X$ for all $X\in\cat$.
\end{prop}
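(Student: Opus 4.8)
The plan is to prove Proposition~\ref{proposition: left adjoint} by dualising the proof of Proposition~\ref{proposition: right adjoint} via the Serre functor $S$. Since $S$ is an autoequivalence of $\cat$, the Serre duality formula $\cat(X,Y) \cong D\cat(Y,SX)$ (where $D$ denotes the $k$-dual) converts the contravariant-finiteness and factorisation statements used for $\cbart$ into the covariant analogues needed here. Concretely, I would first record the dual of Lemma~\ref{lemma: cbart contravariantly finite}: applying the construction dual to $R_0$ to the object $ST'$ produces, for each $X\in\cat$, the triangle $Z\stackrel{\al}{\gfl}X\stackrel{\eps_X}{\gfl}L_0X\gfl\shift Z$ already displayed before the statement, in which $L_0X\in\underline{\cat}(T')=\add\susm S\tbar\ast\add ST'$, the map $\eps_X$ is a minimal \emph{left} $\underline{\cat}(T')$-approximation, $\al$ factors through $(T')^\perp = {}^\perp(ST')$, and $\shift Z\in\tbar^\perp$. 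The key point is that the perpendicular categories swap sides correctly under $S$: the right perp of $T$ used in Proposition~\ref{proposition: right adjoint} becomes the relevant perp here precisely because $S\tbar$ and $ST'$ replace $T$ and $\tbar$.

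Next I would set up the two quotient functors exactly as in the previous proof, writing $Q\colon\cat\to\cat/(\tbar^\perp)$ for the projection and considering the induced functor $J\colon \underline{\cat}(T')/(\tbar^\perp\cap\underline{\cat}(T'))\to\cat/(\tbar^\perp)$; as there, I will identify the source with the image of $\underline{\cat}(T')$ inside $\cat/(\tbar^\perp)$. To prove $J$ is faithful I need the dual of Lemma~\ref{lemma: factors}: a morphism $Y\stackrel{f}{\gfl}X$ with target $X\in\underline{\cat}(T')$ that factors through $\tbar^\perp$ in $\cat$ must already factor through $\tbar^\perp\cap\underline{\cat}(T')$. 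This follows by the same cone argument, now using a minimal left $\tbar$-approximation and the associativity of $\ast$ together with the rigidity of $\shift\tbar$ to absorb the extra copy of $\add\shift\tbar$, just as in the computation $\cbart\ast\add\shift\tbar=\add T\ast\add\shift\tbar$.

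For the adjunction itself I would invoke the dual of Lemma~\ref{lemma: MacLane}: it suffices to exhibit, for each $X$, a morphism $X\stackrel{\eps_X}{\gfl}L_0X$ that is universal \emph{from $X$ to $J$}, i.e.\ through which every morphism in $\cat/(\tbar^\perp)$ from $X$ into an object of $\underline{\cat}(T')$ factors uniquely. Existence of a factorisation is immediate because $\eps_X$ is a left $\underline{\cat}(T')$-approximation and hence so is its image $Q\eps_X$. Uniqueness is the heart of the matter and mirrors the last paragraph of the previous proof: given $R_0X$-side data, i.e.\ a map $L_0X\stackrel{u}{\gfl}Y$ with $Y\in\underline{\cat}(T')$ and $Q(u\,\eps_X)=0$, the composite $u\,\eps_X$ factors through $\tbar^\perp$, hence through $\tbar^\perp\cap\underline{\cat}(T')$ by the dual of Lemma~\ref{lemma: factors}; using that $\eps_X$ is a left approximation one pushes this factorisation back to conclude $u\in(\tbar^\perp)$. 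Finally $L$ is additive as the left adjoint of the additive functor $J$, and $LX=L_0X$ by construction.

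The main obstacle I anticipate is bookkeeping the perpendicular categories correctly under Serre duality: one must check that $S$ really does interchange $\add T$ with $\add ST'$ and $\tbar^\perp$ with the relevant left-perp so that every ``right'' in Proposition~\ref{proposition: right adjoint} becomes the correct ``left'' here, and in particular that $\alpha$ factors through $(T')^\perp={}^\perp(ST')$ rather than through some other perp. Once the duality dictionary is pinned down, each step transposes mechanically from the proof of Proposition~\ref{proposition: right adjoint}, so I would either carry out the dual arguments explicitly or, more economically, apply $S$ to the whole of Proposition~\ref{proposition: right adjoint} stated for the rigid object $S T'$ in place of $T$ and read off the conclusion.
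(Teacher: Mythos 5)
Your proposal is correct and takes essentially the same route as the paper: the paper gives no separate proof of Proposition~\ref{proposition: left adjoint}, obtaining it (exactly as you do) as the formal dual of Proposition~\ref{proposition: right adjoint}, with the Serre functor serving only to rewrite the left perpendiculars ${}^\perp(S\tbar)$ and ${}^\perp(ST')$ as $\tbar^\perp$ and $(T')^\perp$ so that both quotient categories live inside $\cat/(\tbar^\perp)$. One small correction to your bookkeeping: since the dictionary is $T\leftrightarrow ST'$, $\tbar\leftrightarrow S\tbar$ (and the dualisation is passage to the opposite category, not application of $S$ alone, which cannot turn right adjoints into left adjoints), the dual of Lemma~\ref{lemma: factors} must use a minimal left $\add S\tbar$-approximation of the target object, not a minimal left $\add\tbar$-approximation as you wrote, since only then does the cocone land in ${}^\perp(S\tbar)=\tbar^\perp$.
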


The only reason why we assume the existence of a Serre functor here is
that it converts a left perpendicular
subcategory into a right perpendicular subcategory. This allows us to view both
categories in Propositions \ref{proposition: right adjoint} and \ref{proposition: left adjoint}
as subcategories of the same category $\cat/(\tbar^\perp)$.

\subsection{Main result}\label{ssection: main result}
Our aim in this section is to prove that if $\cat$
has a Serre functor then the categories
$\cbart/(\shift T')$ and $\cbart/(T)$ are equivalent
(Theorem~\ref{theorem: equivalence of categories}).
This will then be used in the next section in order to compare the module categories over
the endomorphism algebras of $T$ and $T'$.

We need the following key lemma,
which will often be used throughout the paper.
\begin{lemma}\label{lemma: compute perps}
 We have:
\begin{enumerate}
 \item[(a)] $\cbart = T\ast\shift\tbar = \tbar\ast\shift T'$;
 \item[(b)] $\cbart\cap\tbar^\perp = \add\shift T'$;
 \item[(c)] if $\cat$ has a Serre functor $S$,
then $(\susm S\tbar \ast ST')\cap \tbar^\perp = \add\susm ST$.
\end{enumerate}
\end{lemma}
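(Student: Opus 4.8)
The plan is to prove the three identities by unwinding the definitions of $\ast$ and the perpendicular subcategories, using associativity of $\ast$ and the rigidity of the objects involved, together with the triangle $R^\ast \fl B \fl R \fl \shift R^\ast$ defining the mutation. First I would record the basic fact that since $T = \tbar \oplus R$ and $T' = \tbar \oplus R^\ast$, we have $\add T = \add\tbar \ast \add R$ (in fact $\add(\tbar\oplus R) = \add\tbar \oplus \add R$ already, so $\ct$ and the various starred categories decompose along these summands). I would also use heavily the observation, already recorded in the setup, that $\shift R^\ast \in \tbar^\perp$ and that $\shift\tbar$ is rigid, so that $\add\shift\tbar \ast \add\shift\tbar = \add\shift\tbar$ (the computation performed in the proof of Lemma~\ref{lemma: factors}).

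\textbf{Part (a).}
The first equality $\cbart = T\ast\shift\tbar$ is the definition given in the setup section, so nothing is to prove there. For the second equality $\cbart = \tbar\ast\shift T'$, I would use the defining triangle of the mutation. Rotating $R^\ast \fl B \fl R \fl \shift R^\ast$ gives $R \in R^\ast \ast \shift^{-1}(\text{nothing})$; more usefully, it exhibits $R$ as an extension showing $R \in \tbar \ast \shift R^\ast$ (since $B \in \add\tbar$ and $\shift R^\ast$ is the cone). Thus $\add R \subseteq \tbar \ast \shift R^\ast$, whence
$$
T\ast\shift\tbar = (\tbar\oplus R)\ast\shift\tbar
\subseteq \tbar \ast (\shift R^\ast \ast \shift\tbar) \ast \shift\tbar
= \tbar \ast \shift\tbar \ast \shift R^\ast,
$$
using associativity and that $\shift\tbar \ast \shift R^\ast = \shift R^\ast \ast \shift \tbar$ up to reorganising (one must be slightly careful here; I would instead combine $\shift\tbar \oplus \shift R^\ast = \shift T'$ directly). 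The cleanest route is: $T = \tbar \oplus R \subseteq \tbar \ast \shift T'$ because $\tbar \in \tbar$ trivially and $R \in \tbar \ast \shift R^\ast \subseteq \tbar \ast \shift T'$; then $\shift\tbar \subseteq \shift T'$ as a summand; so $\cbart = T \ast \shift\tbar \subseteq (\tbar \ast \shift T') \ast \shift T' = \tbar \ast (\shift T' \ast \shift T') = \tbar \ast \shift T'$, the last step using that $T'$ is rigid. The reverse inclusion $\tbar \ast \shift T' \subseteq \cbart$ is obtained symmetrically, writing $\shift T' = \shift\tbar \oplus \shift R^\ast$ and using the same triangle to put $\shift R^\ast \in T \ast \shift\tbar$, i.e. $R^\ast \in \susm(T \ast \shift\tbar)$, which follows by rotating the mutation triangle the other way.

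\textbf{Part (b).}
For $\cbart \cap \tbar^\perp = \add\shift T'$, one inclusion is easy: $\shift T' = \shift\tbar \oplus \shift R^\ast$ lies in $\tbar^\perp$ (as $\shift\tbar$ is rigid and $\shift R^\ast \in \tbar^\perp$ by the setup) and in $\cbart$ by part (a). For the reverse inclusion, I would take $X \in \cbart \cap \tbar^\perp$ and use the description $\cbart = \tbar \ast \shift T'$ from part (a): there is a triangle $\tbar_0 \fl X \fl \shift T'_0 \fl \shift\tbar_0$ with $\tbar_0 \in \add\tbar$ and $\shift T'_0 \in \add\shift T'$. Applying $\cat(\tbar,-)$ and using $X \in \tbar^\perp$ together with $\shift T' \in \tbar^\perp$ should force the connecting map to split off $\tbar_0$, so that $X$ is a summand of $\shift T'_0$, giving $X \in \add\shift T'$.

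\textbf{Part (c) and the main obstacle.}
The statement (c) is the Serre-dual counterpart of (b): with the Serre functor $S$ converting left-perpendicular into right-perpendicular categories, the subcategory $\susm S\tbar \ast ST'$ is dual to $\cbart$, and $\add\susm ST$ is dual to $\add\shift T'$. My plan is to apply the Serre functor $S$ (an autoequivalence) to translate (c) into a statement of the same shape as (b), then invoke (b) — or rather its proof — in the dual category, being careful that $S$ sends $\tbar^\perp$ to ${}^\perp(S\tbar)$ and tracks the shift correctly. The main obstacle I anticipate is precisely the bookkeeping in (c): one must verify that applying $S$ to the mutation triangle and to the perpendicularity conditions yields exactly the dual setup, so that the analogue of part (b) applies verbatim. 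Concretely, the delicate point is checking that $S$ interchanges the roles of $T$ and $T'$ correctly and that the shifts land where claimed, since an off-by-one shift in the Serre-duality identity $\cat(X, SY) \cong D\,\cat(Y,X)$ would derail the identification of $\add\susm ST$ as the intersection. I would handle this by spelling out the dual of the triangle $R^\ast \fl B \fl R \fl \shift R^\ast$ under $S$ and confirming it plays the same role for the pair $(ST, ST')$ that the original plays for $(T,T')$, after which (c) follows formally from the argument of (b).
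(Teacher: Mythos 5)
Your proposal is correct and follows essentially the same route as the paper: part (a) via the exchange triangle, associativity of $\ast$ and rigidity of $T$ and $T'$ (where the paper phrases the reverse inclusion as "the same argument in the opposite category" and you instead rotate the exchange triangle directly, which is the same symmetry), part (b) by the split-triangle argument (the map $\tbar_0 \fl X$ vanishes since $X\in\tbar^\perp$, so $X$ is a summand of $\shift T'_0$), and part (c) by transporting the problem along the Serre functor and reducing to (a) and the dual of (b). The ``bookkeeping obstacle'' you anticipate in (c) is in fact negligible: it suffices to apply the autoequivalence $\susm S$ to the already-proven identity (a), giving $\susm S\tbar\ast ST' = \susm ST\ast S\tbar$, and to use Serre duality in the form $\tbar^\perp = {}^\perp(S\tbar)$, which is exactly the paper's two-line computation, rather than re-deriving the whole mutation setup for the pair $(ST,ST')$.
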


\begin{proof}
(a) The exchange triangle shows that $T\in\tbar\ast\shift T'$.
We thus have
\begin{eqnarray*}
 T\ast\shift\tbar & \subseteq & (\tbar\ast\shift T')\ast\shift\tbar \\
 & = & \tbar\ast(\shift T'\ast\shift\tbar) \\
 & = & \tbar\ast\shift T'.
\end{eqnarray*}
The reverse inclusion is obtained by applying this inclusion to $\shift T'$
(instead of $T$) in the opposite category.

(b) immediately follows from (a).

(c) also follows from (a):
\begin{eqnarray*}
 (\susm S\tbar \ast ST')\cap \tbar^\perp & = & (\susm ST\ast S\tbar)\cap\,^\perp S\tbar \text{ (by (a))}\\
 & = & \susm ST.
\end{eqnarray*}
\end{proof}

Assume that $\cat$ has a Serre functor $S$.
Recall that we write $\cbart$ (respectively, $\underline{\cat}(T')$)
for the full subcategory $T\ast\shift\tbar$
(respectively, $\susm S\tbar \ast ST'$) of $\cat$.
By Proposition~\ref{proposition: right adjoint},
Proposition~\ref{proposition: left adjoint} and Lemma~\ref{lemma: compute perps},
we have a pair of adjoint functors $(G,H)$,
where $G=JI$ and $H=RJ$. Since $I,J,L$ and
$R$ are additive, so are $G$ and $H$.
$$
\xymatrix{
                      &
\cat/(\tbar^\perp)
 \ar@<.5ex>[dl]^R
 \ar@<-.5ex>[dr]_L    &
                      \\
\cbart / (\shift T')
 \ar@<.5ex>[ur]^I 
 \ar@<.5ex>[rr]^G     &
                      &
\underline{\cat}(T') / (\susm ST)
 \ar@<-.5ex>[ul]_J
 \ar@<.5ex>[ll]^H
}
$$

\begin{rk}
We write $\tau$ for the Auslander--Reiten translation
$\tau= S\shift^{-1}$ (see \cite[\S I.2]{rvdb}).
Then, by Lemma~\ref{lemma: compute perps} we have that $\underline{\cat}(T') = \tau \cbart$.
\end{rk}

\begin{theo}\label{theorem: equivalence of categories}
Assume that $\cat$ has a Serre functor $S$.
Then the functors $G$ and $H$ are quasi-inverse equivalences of categories.
In particular, the categories $\cbart/(\shift T')$ and $\cbart/(T)$ are equivalent.
\end{theo}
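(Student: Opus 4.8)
The plan is to deduce the theorem from a single statement: that the fully faithful functors $I$ and $J$ have the \emph{same} essential image inside $\cat/(\tbar^\perp)$. Indeed, suppose this common image is a full subcategory $\mc\subseteq\cat/(\tbar^\perp)$. Since $I$ is fully faithful with right adjoint $R$, and $J$ is fully faithful with left adjoint $L$, both $I$ and $J$ are then equivalences onto $\mc$, with quasi-inverses given by the restrictions of $R$ and $L$ to $\mc$ (the unit of $I\dashv R$ and the counit of $L\dashv J$ are isomorphisms, and the remaining transformations are isomorphisms on the image). As $G=LI$ and $H=RJ$, each is a composite of two equivalences
\[\cbart/(\shift T')\xrightarrow{\ I\ }\mc\xrightarrow{\ L\ }\underline{\cat}(T')/(\susm ST)\quad\text{and}\quad\underline{\cat}(T')/(\susm ST)\xrightarrow{\ J\ }\mc\xrightarrow{\ R\ }\cbart/(\shift T'),\]
hence an equivalence; being moreover an adjoint pair, $G$ and $H$ are automatically quasi-inverse.

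So the task reduces to two inclusions of essential images. For $\cbart\subseteq\operatorname{im}J$ I must show that every $X\in\cbart$ is isomorphic, in $\cat/(\tbar^\perp)$, to an object of $\underline{\cat}(T')$; the natural candidate is $L_0X$, with candidate isomorphism the left $\underline{\cat}(T')$-approximation $\eps_X\colon X\to L_0X$ of Proposition~\ref{proposition: left adjoint}. Dually, for $\underline{\cat}(T')\subseteq\operatorname{im}I$ I must show that $\eta_Y\colon R_0Y\to Y$ is an isomorphism in $\cat/(\tbar^\perp)$ for every $Y\in\underline{\cat}(T')$. These two claims are interchanged by the duality afforded by the Serre functor, under which the construction of $L_0$ is, by definition, dual to that of $R_0$ and a left Hom-perpendicular subcategory is converted into a right one; so it suffices to prove the first, namely that for every $X\in\cbart$ the morphism $\eps_X\colon X\to L_0X$ is invertible in $\cat/(\tbar^\perp)$.

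This last point is where the real work lies, and it is not formal. Writing the defining triangle $Z\xrightarrow{\al}X\xrightarrow{\eps_X}L_0X\to\shift Z$, we know $\al$ factors through $(T')^\perp\subseteq\tbar^\perp$ and that $\shift Z\in\tbar^\perp$, so both morphisms bracketing $\eps_X$ already vanish in $\cat/(\tbar^\perp)$. The main obstacle is that, because $\cat/(\tbar^\perp)$ is an additive (ideal) quotient rather than a Verdier quotient, this vanishing does not by itself make $\eps_X$ invertible: one genuinely has to construct an inverse modulo $(\tbar^\perp)$. I expect to do this by lifting the relevant identity morphisms along the approximation $\eps_X$, using the minimality of $\eps_X$, the rigidity of $T'$ together with $\shift T'\in\tbar^\perp$, and the Hom-vanishing of Lemma~\ref{lemma: compute perps} to check that all obstructions are absorbed into $\tbar^\perp$. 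Controlling these obstructions is the crux of the argument.

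Finally, the ``in particular'' clause is immediate once $G$ is known to be an equivalence. By the Remark preceding the theorem, $\tau=S\susm$ is an autoequivalence of $\cat$ with $\tau\cbart=\underline{\cat}(T')$, and since $\tau(\add T)=\add\susm ST$ it carries the ideal $(T)$ onto $(\susm ST)$; hence $\tau$ induces an equivalence $\cbart/(T)\simeq\underline{\cat}(T')/(\susm ST)$. Composing with $G$ then yields the asserted equivalence $\cbart/(\shift T')\simeq\cbart/(T)$.
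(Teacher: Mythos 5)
Your opening reduction (if $I$ and $J$ had the same essential image then $G$ and $H$, being an adjoint pair of equivalences, would be quasi-inverse) is a valid implication, and your final paragraph on the ``in particular'' clause is fine. The fatal problem is that the statement you reduce everything to is \emph{false}: the essential images of $I$ and $J$ in $\cat/(\tbar^\perp)$ do not coincide in general, so $\eps_X$ need not become invertible modulo $(\tbar^\perp)$. Here is a counterexample. Let $\cat$ be the cluster category of type $A_3$, modelled by diagonals of a hexagon with vertices $0,\dots,5$ (so $\shift=\tau$, and $\cat(X,Y)\neq 0$ for indecomposables exactly when $X$ crosses $\tau^{-1}Y$). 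Take $T=T_1\oplus T_2$ with $T_1=(0,2)$, $T_2=(0,3)$, $\tbar=T_1$, $R=T_2$. The exchange triangle is $(2,4)\fl (0,2)\fl (0,3)\fl (1,3)$, so $R^{\ast}=(2,4)$, and one computes that the indecomposables of $\underline{\cat}(T')$ are $(5,1),(2,5),(4,0),(0,2)$, of which the first two lie in $\tbar^\perp$. Now $X=T_2=(0,3)$ lies in $\cbart$ and is a nonzero object of $\cat/(\tbar^\perp)$, but it is isomorphic there to \emph{no} object of $\underline{\cat}(T')$: indeed $\cat((4,0),(0,3))=0$, every morphism from $(5,1)$ or $(2,5)$ lies in the ideal $(\tbar^\perp)$, and every morphism $(0,2)\fl(0,3)$ composed with any morphism $(0,3)\fl(0,2)$ vanishes because $\cat((0,3),(0,2))=0$; hence for any $W\in\underline{\cat}(T')$ and any $u\colon X\fl W$, $v\colon W\fl X$, the composite $vu$ lies in $(\tbar^\perp)$, while $1_X\notin(\tbar^\perp)$ since $(0,3)\notin\tbar^\perp$. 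So $\operatorname{im}I\not\subseteq\operatorname{im}J$, and your candidate map $\eps_{T_2}$ (the nonzero morphism $(0,3)\fl(0,4)$) cannot be inverted: there is not even a nonzero morphism $(0,4)\fl(0,3)$ in $\cat$.

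What is true --- and is exactly what the paper proves --- is strictly weaker: $\eps_X$ becomes invertible only \emph{after applying} $R$, i.e.\ the lift $\vph_X$ of $\eps_X$ through $\eta_{L_0X}$ is an isomorphism in $\cbart/(\shift T')$, equivalently $R_0L_0X\cong X\oplus V$ in $\cat$ with $V\in\add\shift T'$. (In the example above, $G$ sends $(0,3)$ to $(0,4)$ and $X\cong HGX$ holds in $\cbart/(\shift T')$, even though $IX\not\cong JGX$ in $\cat/(\tbar^\perp)$.) Proving this is the delicate approximation argument of the paper: one shows $[\eps_X \; p]\colon X\oplus\shift T'_0\fl L_0X$ is a right $\cbart$-approximation and that the indecomposable $X$ survives as a summand of the minimal one. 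Note also that your target is stronger than the theorem even as pure category theory: an adjoint pair $(LI,RJ)$ of this shape can consist of quasi-inverse equivalences while the images of $I$ and $J$ differ (take $\bc=\operatorname{mod}k(1\to 2)$ with $I,J$ the inclusions of $\add P_1$ and $\add S_1$, $R=P_1\otimes_k\homph(P_1,-)$, $L=S_1\otimes_k\homph(P_1,-)$), so no formal manipulation can rescue the reduction. Finally, even on its own terms the proposal proves nothing: the one step you identify as ``the crux'' --- constructing an inverse to $\eps_X$ modulo $(\tbar^\perp)$ --- is precisely the step that is left as a plan, and it is precisely the step that fails.
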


\begin{proof}
The construction would be simplified
if we had that, if $X$ belongs to $T\ast\shift \tbar$,
then the left $\add \shift^{-1}S\tbar \ast \add ST'$-approximation $X \stackrel{\eps_X}{\gfl} L_0X$ of $X$ (see Proposition~\ref{proposition: left adjoint}
and the paragraph before it) is also a minimal right
$T\ast\shift \tbar$-approximation of $L_0X$.
However, this cannot be expected to hold in general (take $X$ to be $\shift T'$, for instance).

We can modify this approach in the following way.
First, since the functors $G$ and $H$ are additive,
we may assume that $X$ is indecomposable. This will help in proving that $X$ is a summand of $R_0L_0X$.
Second, we will add to $X$ a minimal right $\add(\shift T')$ approximation $\shift T'_0$ of $L_0X$.
This will be needed in order to get a right approximation of $L_0 X$, while being harmless since the objects
$X$ and $X\oplus\shift T'_0$ are isomorphic in $\catt$.

So, take an indecomposable object $X\in T \ast \shift \tbar$ and assume that $X$ does not belong
to $\add\shift T'$ (otherwise, $X$ would be isomorphic to $0$ in $\catt$. Consider the triangle
$Z \stackrel{\al}{\gfl} X \stackrel{\eps_X}{\gfl} L_0X \stackrel{\beta}{\gfl} \shift Z$
in $\cat$, constructed in the paragraph before Proposition~\ref{proposition: left adjoint}, where
$\al\in \left((T')^\perp\right)$
and $\shift Z \in \tbar^\perp$. Let $\shift T'_0 \stackrel{p}{\gfl} L_0X$ be a minimal
right $\add\shift T'$-approximation of $L_0X$ in $\cat$. We claim that
$X\oplus\shift T'_0 \stackrel{[\eps_X p]}{\gfl} L_0X$ is a right
$T\ast\shift\tbar$-approximation of $L_0X$ in $\cat$.
Let $X'\stackrel{f}{\gfl}L_0X$ be a morphism in $\cat$, with
$X'\in T\ast\shift\tbar$. By assumption, there is a triangle
$T'_1 \stackrel{a}{\gfl} \tbar_0 \stackrel{b}{\gfl} X' \stackrel{c}{\gfl} \shift T'_1$
in $\cat$, with $T'_1\in\add T'$ and $\tbar_0\in\add\tbar$.
Since $\tbar_0$ is in $\add\tbar$ and $\shift Z$ is in $\tbar^\perp$,
the composition $\beta f b$ vanishes and $f$ induces a morphism
of triangles:
$$
\xymatrix{
\tbar_0 \dr^b \bas_{\susm h}    &
X' \dr^c \bas_f \bgex_v         &
\shift T'_1 \dr^{-\shift a}
\bas_g \bdro \bgex_u            &
\shift \tbar_0 \bas_h           \\
X \dr_{\eps_X}                  &
L_0X \dr_\beta                  &
\shift Z \dr_{-\shift\al}       &
\shift X
}
$$
Since $\al$ factors through $(T')^\perp$, we have
$(-\shift\al) g = 0$ and there exists
$\shift T'_1 \stackrel{u}{\gfl}L_0X$ such that
$g = \beta u$. This implies $\beta(f-uc)=0$ so that
there exists $X'\stackrel{v}{\gfl}X$ such that
$f = uc+\eps_X v$. The composition $uc$ is in the ideal
$(\shift T')$ and thus factors through $p$, i.e.\
there exists $w$
making the following square commute
$$
\xymatrix{
X'\basp_w\dr^c      &
\shift T'_1 \bas_u  \\
\shift T'_0 \dr^p   &
L_0X
}
$$
We thus have $f = [\eps_X \; p] \left[^v_w\right]$ and therefore
the morphism $[\eps_X \; p]$ is a right $T\ast\shift\tbar$-approximation
of $L_0X$ in $\cat$.

Since also $R_0L_0X\stackrel{\eta_{L_0X}}{\gfl}L_0X$ is a right $T\ast\shift\tbar$-approximation
of $L_0X$ in $\cat$, we can write $R_0L_0X$ as a direct sum $X'\oplus X''$, and
$\eta_{L_0X} = [\eta'\; 0] : X'\oplus X'' \fl L_0X$, where $X'\stackrel{\eta'}{\gfl}L_0X$
is a minimal right $T\ast\shift\tbar$-approximation. Moreover, we have $X''\in\add\shift T'$, since
in the triangle $Z'\fl X'\oplus X''\fl L_0X \fl$ given by Lemma~\ref{lemma: cbart contravariantly finite},
$Z'$ belongs to $\tbar^\perp$, and $X''$ is a summand of $Z'$. Thus
$X''$ belongs to $\in\tbar^\perp\cap\cbart$ which is $\add\shift T'$ by Lemma~\ref{lemma: compute perps}.

Now $X'$ is a summand of the approximation $X\oplus \shift T'_0$. Moreover, $X'$ contains $X$
as a summand. Otherwise, we would have $R_0L_0X \in \add\shift T'$,
which implies $L_0X\in\tbar^\perp\cap\underline{\cat}(T')=\add\susm ST'$
(by applying the functor $\cat(\tbar,-)$ to the triangle $Z'\fl R_0L_0X\fl L_0X\fl$),
which dually implies $X\in\add\shift T'$ (note that we assumed $X\notin\add\shift T'$).

As a consequence, given a lift $\xymatrix{& X \ar[d]^{\eps_X} \ar@{-->}[dl]_{\widetilde{\vph}_X}
\\ R_0L_0X \ar[r]^{\eta_{L_0X}} & L_0X}$, the image $\vph_X$ of $\widetilde{\vph}_X$
in $\cbart/(\shift T')$, which is independent of the choice of $\widetilde{\vph}_X$
by Proposition~\ref{proposition: right adjoint}, is an isomorphism (and $\eps_X$
is a minimal right $\cbart$-approximation of $L_0X$ in $\cat/(\shift T')$).

Let us check that we have defined a natural isomorphism
$\vph : 1 \fl HG$.

Let $X\stackrel{f}{\gfl}Y$ be a morphism in $\cbart$.
By construction, there is a diagram in $\cat$
$$\xymatrix@!@-3pc{
X \ar[rr]^{\widetilde{\vph}_X} \ar[ddd]_f \ar[dr]
&& R_0L_0X \ar[dl] \ar[ddd]^{HGf} \\
& L_0X \ar[d]^{Gf} & \\
& L_0Y & \\
Y \ar[ur] \ar[rr]_{\widetilde{\vph}_Y}
&& R_0L_0Y \ar[ul]_\eta
}$$
were we write $\eta$ for $\eta_{L_0Y}$
and where the inner two triangles and the inner
two squares commute.
We thus have $\eta (HGf \circ \widetilde{\vph}_X - \widetilde{\vph}_Y\circ f) = 0$,
and $HGf \circ \widetilde{\vph}_X - \widetilde{\vph}_Y\circ f$ factors through
$\overline{Z} \fl R_0L_0Y$ in the triangle
$\overline{Z}\fl R_0L_0Y \stackrel{\eta}{\fl} L_0Y \fl$,
where $\overline{Z}\in\tbar^\perp$.
This shows that $HGf \circ \widetilde{\vph}_X - \widetilde{\vph}_Y\circ f$ 
factors through $\tbar^\perp$. Since $X$ lies in $\cbart$, we can apply
Lemma~\ref{lemma: factors}. The morphism $HGf \circ \widetilde{\vph}_X - \widetilde{\vph}_Y\circ f$
factors through $\tbar^\perp\cap\cbart$, which is $\add\shift T'$ by Lemma~\ref{lemma: compute perps}.
As a consequence, $\vph$ is a natural transformation.

By duality, there is a natural isomorphism $GH \fl 1$; and the functors $G$ and $H$ are quasi-inverse equivalences of categories.
\end{proof}

\subsection{A module-theoretic interpretation}
In this section we assume that $\cat$ 
has a Serre functor $S$. 
In this case, the assumptions of functorial finiteness
(see Section~\ref{s:setup}) are automatically satisfied
for all rigid objects (but have to be added in the case of rigid subcategories).
We write $D$ for the duality functor $\homph_k(-,k)$.
Recall that $T\in\cat$ is a basic rigid object, and
$R$ is a direct summand of $T$, with $T=\tbar\oplus R$.
We write $\tbar=T_1\oplus\cdots\oplus T_n$ and
$R=T_{n+1}\oplus\cdots\oplus T_m$, where the $T_i$
are indecomposable. Recall also that $\shift R^*$ is the
cone of a minimal right $\add\tbar$-approximation
of $R$.
We have $T'=\tbar\oplus R^\ast =
T'_1\oplus\cdots\oplus T'_m$
where $T'_i = T_i$ if $i\leq n$.
Define $\Lambda$ (respectively, $\Lambda'$), to be the endomorphism algebra $\End_\cat(T)^\text{op}$,
(respectively, $\End_\cat(T')^\text{op}$).

Let $S_j$ be the simple top of the indecomposable projective $\Lambda$-module
$\cat(T,T_j)$, and let $S'_j$ be the simple socle of the indecomposable injective $\Lambda'$-module
$D\cat(\shift T'_j, \shift T')$.
We consider the exact categories $\ec$ and $\ec'$ defined as follows.
The category $\ec$ (respectively, $\ec'$) is the full
subcategory of
$\operatorname{mod}\Lambda$, (respectively, $\operatorname{mod}\Lambda'$)
whose objects $M$ (respectively, $N$), satisfy $\ext^1_\Lambda(M,S_j) = 0$,
(respectively, $\ext^1_{\Lambda'}(S'_j,N)=0$)
for all $j > n$.

\begin{rk}\label{remark: exchange triangles}
For each indecomposable summand $R_i$ of $R$,
let $R_i^\ast \fl \ubar_i \fl R_i \fl \shift R_i^\ast$
be a triangle in $\cat$, with $\ubar_i\fl R_i$ a minimal right $\tbar$-approximation.
Then (as in~\cite{BMRRT}) the object $R_i^\ast$ is indecomposable. Moreover,
$\oplus_i \ubar_i \fl \oplus_i R_i$ is a minimal right $\tbar$-approximation of $R$.
As a consequence, $R^\ast$ is isomorphic to $\oplus_i R_i^\ast$. This shows that
the basic objects
$R$ and $R^\ast$ have the same number of indecomposable summands.
\end{rk}

We can now restate Theorem~\ref{theorem: equivalence of categories}
in module-theoretic terms:

\begin{theo}\label{theorem: equivalence module version}
Suppose that $\cat$ has a Serre functor. Then
there is an equivalence of categories:
$$\ec / \add\cat(T,\shift R^\ast) \simeq \ec' / \add D\cat(R,\shift T').$$
\end{theo}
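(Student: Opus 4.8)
The plan is to deduce this module-theoretic statement from the already-established categorical equivalence $\cbart/(\shift T')\simeq\cbart/(T)$ of Theorem~\ref{theorem: equivalence of categories}. The key mechanism is the functor $\cat(T,-)$, which under the standard theory (as developed in~\cite{BMloc1,KR1}) should induce an equivalence between an appropriate quotient of $\cbart$ and the exact subcategory $\ec$ of $\operatorname{mod}\Lambda$, and dually the functor $D\cat(-,\shift T')$ should relate $\cbart$ to $\ec'\subseteq\operatorname{mod}\Lambda'$. Concretely, I would first show that $\cat(T,-)$ restricts to an equivalence $\cbart/(\shift T')\xrightarrow{\ \sim\ }\ec$ of some flavour, taking the ideal $(\shift T')$ to something that, after passing to modules, corresponds to $\add\cat(T,\shift R^\ast)$. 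The point is that $\shift T'=\shift\tbar\oplus\shift R^\ast$, and morphisms factoring through $\shift\tbar$ are already killed inside $\cbart/(T)$ in a compatible way, so the residual part of the ideal $(\shift T')$ maps precisely to $\add\cat(T,\shift R^\ast)$.

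The steps, in order, would be: (i) identify $\ec$ with (a quotient of) $\cbart$ via $\cat(T,-)$, checking that $M\in\operatorname{mod}\Lambda$ lies in $\ec$ — i.e.\ $\ext^1_\Lambda(M,S_j)=0$ for $j>n$ — exactly when $M$ is the image of an object of $\cbart$; the perpendicularity condition $\ext^1_\Lambda(M,S_j)=0$ should translate, via the triangles defining $\cbart=T\ast\shift\tbar$, into $M$ being presented by objects of $\add T$ with the relevant vanishing against $R$. (ii) Track the ideal: show that $\cat(T,-)$ sends the ideal $(\shift T')$ of $\cbart$ onto $\add\cat(T,\shift R^\ast)$ inside $\ec$, so that $\cbart/(\shift T')\simeq\ec/\add\cat(T,\shift R^\ast)$. (iii) Dually, using the Serre functor to pass between left- and right-perpendicular conditions (as in the remark after Proposition~\ref{proposition: left adjoint}), identify $\ec'/\add D\cat(R,\shift T')$ with the other side $\cbart/(T)$ via the functor $D\cat(-,\shift T')$; here the simple socles $S'_j$ and the injective modules $D\cat(\shift T'_j,\shift T')$ play the role dual to the simple tops $S_j$ and projectives $\cat(T,T_j)$. (iv) Compose these two identifications with the equivalence $\cbart/(\shift T')\simeq\cbart/(T)$ of Theorem~\ref{theorem: equivalence of categories}.

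The main obstacle I expect is step~(i)–(ii): precisely pinning down that the functor $\cat(T,-)$, which is well understood on $\ct=T\ast\shift T$, behaves correctly on the larger category $\cbart=T\ast\shift\tbar$ and lands in the exact subcategory $\ec$ rather than all of $\operatorname{mod}\Lambda$. One must verify both that every object of $\ec$ arises as $\cat(T,X)$ for some $X\in\cbart$ (essential surjectivity), and that the kernel of the induced functor on morphisms is exactly the ideal generated by $\shift T'$ — equivalently, by $\shift R^\ast$ modulo the part already in $(T)$. The characterisation of $\ec$ by the vanishing $\ext^1_\Lambda(M,S_j)=0$ must be matched with the homological data encoded in the triangles $R_i^\ast\to\ubar_i\to R_i\to\shift R_i^\ast$ of Remark~\ref{remark: exchange triangles}; getting this dictionary exactly right, including minimality of approximations, is where the real work lies. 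The duality in step~(iii) should then be formal once the covariant side is settled, since the Serre functor converts the left-perpendicular condition defining $\ec'$ into the right-perpendicular condition defining $\ec$ in the opposite category.
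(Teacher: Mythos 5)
Your proposal matches the paper's proof essentially step for step: the paper's Lemma~\ref{lemma: ec} carries out your steps (i)--(ii) (the functor $\cat(T,-)$ is fully faithful on $\cbart/(\shift\tbar)$ with essential image $\ec$, the image being computed via minimal projective presentations and the exchange triangles exactly as you anticipate, after which $\cbart/(\shift T')\simeq\ec/\add\cat(T,\shift R^\ast)$ follows by quotienting out the residual ideal $(\shift R^\ast)$), Lemma~\ref{lemma: ecprime} is your dual step (iii), and the theorem is then obtained, as in your step (iv), by composing with Theorem~\ref{theorem: equivalence of categories}. The only cosmetic slip is your remark that morphisms factoring through $\shift\tbar$ are ``already killed inside $\cbart/(T)$'': they are in fact killed by the functor $\cat(T,-)$ itself, since $T$ rigid gives $\cat(T,\shift\tbar)=0$, but this does not affect the argument.
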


The proof will be given later in this section.
We note that, if $\cat$ is 2-Calabi--Yau, then the modules $D\cat(R,\shift T')$
and $\cat(T',\shift R)$ are isomorphic.
We also note that although the statement of the
equivalence does not need a Serre functor,
the existence is needed in the proof, in order to
apply Theorem~\ref{theorem: equivalence of categories}.

In order to prove
Theorem~\ref{theorem: equivalence module version},
we will need the following two lemmas.

\begin{lemma}\label{lemma: ec}
The functor $\cat(T,-)$ induces a fully faithful functor
\[\cbart / (\shift\tbar) \gfl\operatorname{mod}\Lambda.\]
Its essential image is $\ec$.
\end{lemma}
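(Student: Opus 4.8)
The plan is to show that the functor $\cat(T,-)$, restricted to $\cbart$, factors through the quotient $\cbart/(\shift\tbar)$ and lands in $\ec$, and then to establish full faithfulness and identify the essential image. First I would check that $\cat(T,-)$ sends $\add\shift\tbar$ to zero: since $T$ is rigid, $\cat(T,\shift\tbar)=\ext^1_\cat(T,\tbar)=0$, so any morphism factoring through $\add\shift\tbar$ is killed by $\cat(T,-)$. Hence the functor descends to $\cbart/(\shift\tbar)\fl\modl$. I would compute its value on a typical object using the defining triangle: for $X\in\cbart$ there is a triangle $U_0\fl X\fl\shift\ubar_1\fl\shift U_0$ with $U_0\in\add T$ and $\ubar_1\in\add\tbar$, and applying $\cat(T,-)$ gives a presentation $\cat(T,U_0)\fl\cat(T,X)\fl\cat(T,\shift\ubar_1)$. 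Since $\cat(T,\shift\ubar_1)=\ext^1_\cat(T,\ubar_1)=0$ by rigidity of $T$, the module $\cat(T,X)$ is a cokernel of a map between projectives $\cat(T,U_0)\fl\cat(T,U_1)$, where I would also use the triangle $\tbar_1\fl U_0\fl X$ coming from the definition of $\cbart=T\ast\shift\tbar$ to get the projective cover term on the left.

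Next I would prove full faithfulness. For objects $X,Y\in\cbart$ I want to show that $\cat(T,-)$ induces an isomorphism $(\cbart/(\shift\tbar))(X,Y)\cong\homph_\Lambda(\cat(T,X),\cat(T,Y))$. The natural candidate for this kind of statement is a standard argument combining the projectivity of $\cat(T,T)=\Lambda$ and the presentation above: a module homomorphism $\cat(T,X)\fl\cat(T,Y)$ lifts along the projective presentation of $\cat(T,X)$ to a map of the underlying triangles, hence to a morphism $X\fl Y$ in $\cat$, and this lift is unique modulo morphisms factoring through $\add\shift\tbar$. This is exactly where the quotient by $(\shift\tbar)$ is needed: the ambiguity in the lift is precisely the image of $\cat(T,\shift\ubar_1)$-type data, which vanishes by rigidity, while surjectivity of $\cat(T,-)$ on Hom-spaces follows from the fact that $\cat(T,U_i)$ are the projectives and any module map is determined on a projective presentation. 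I would organise this as: faithfulness from the computation that the kernel of $\cat(T,-)$ on morphisms is exactly $(\shift\tbar)$, and fullness from lifting module maps through the projective presentations.

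Finally I would identify the essential image with $\ec$. I expect the inclusion ``essential image $\subseteq\ec$'' to come from a direct $\ext^1_\Lambda$ computation: for $X\in\cbart$, one shows $\ext^1_\Lambda(\cat(T,X),S_j)=0$ for $j>n$, i.e.\ for the simple tops $S_j$ corresponding to summands of $R$. Here I would use that $S_j=\cat(T,\shift T_j^\ast)$ (the simple top of $\cat(T,T_j)$, realised as in the introductory example via the exchange triangle of Remark~\ref{remark: exchange triangles}), and that the projective presentation of $\cat(T,X)$ has its left-hand term in $\add\cat(T,\tbar)$ because $X\in\cbart$ means the relevant map comes from $\tbar$ rather than all of $T$. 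The vanishing of $\ext^1$ against $S_j$ then reflects that no summand $T_j$ of $R$ appears as an obstruction. For the reverse inclusion $\ec\subseteq$ essential image, I would take $M\in\ec$, choose a minimal projective presentation $P_1\fl P_0\fl M\fl 0$ with $P_i\in\add\cat(T,T)$, lift it to a triangle in $\cat$ via the equivalence $\cat/(\shift T)\simeq\modl$ (the analogue of the equivalence used in the introduction), and argue that the condition $\ext^1_\Lambda(M,S_j)=0$ for $j>n$ forces $P_1$ to lie in $\add\cat(T,\tbar)$, so that the lifted object lies in $T\ast\shift\tbar=\cbart$. The main obstacle I anticipate is this last identification of the essential image: translating the homological condition $\ext^1_\Lambda(M,S_j)=0$ into the geometric condition that a projective presentation of $M$ can be chosen with $P_1\in\add\cat(T,\tbar)$ requires carefully relating $\ext^1_\Lambda(-,S_j)$ to the multiplicity of the indecomposable projective $\cat(T,T_j)$ in the top of $P_1$, and then showing that the minimal presentation can indeed be lifted to a triangle whose outer term lies in $\add\shift\tbar$ rather than merely in $\add\shift T$.
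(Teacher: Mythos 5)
Your strategy coincides with the paper's own proof: full faithfulness comes from showing that a morphism out of an object of $\cbart$ killed by $\cat(T,-)$ (equivalently, factoring through $\add\shift T$) already factors through $\add\shift\tbar$, via the defining triangle $\vbar_1\fl V_0\fl X\fl\shift\vbar_1$ and rigidity of $T$, together with fullness of $\cat(T,-)$ on $\cat(T)$ (which the paper simply cites from \cite[Prop.~6.2]{IY} and \cite[Lemma~4.3]{BMloc1}, and you re-derive by lifting along projective presentations); and the essential image is identified exactly as in the paper, through the key fact that for a \emph{minimal} projective presentation $\cat(T,U_\beta)\fl\cat(T,U_\al)\fl M\fl 0$ the dimension of $\ext^1_\Lambda(M,S_j)$ equals the multiplicity of $P_j$ in $\cat(T,U_\beta)$, so that the condition defining $\ec$ is equivalent to $U_\beta\in\add\tbar$, i.e.\ to the lifted object lying in $T\ast\shift\tbar=\cbart$.

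Two caveats. First, your side-claim $S_j=\cat(T,\shift T_j^\ast)$ is false in the generality of this paper: $\cat(T,\shift T_j^\ast)$ is the module $Q_j\simeq\Lambda/\Lambda(1-e)\Lambda$ of Lemma~\ref{lemma: QSm}, and it coincides with the simple $S_j$ only when the quiver of $\Lambda$ has no loop at the corresponding vertex (Corollary~\ref{corollary: no loop}); Example~\ref{ss:example2} mutates precisely at a loop, where the two differ. Fortunately this identification is never actually needed in your argument: for the inclusion of the image into $\ec$ you only need that $\homph_\Lambda(P_i,S_j)=0$ for $i\neq j$, so a presentation with left-hand term in $\add\cat(T,\tbar)$ forces $\ext^1_\Lambda(M,S_j)=0$ for all $j>n$ (and this part works even for non-minimal presentations, since $\ext^1_\Lambda(M,S_j)$ is a subquotient of $\homph_\Lambda(P_1,S_j)$; minimality is only needed for the converse inclusion $\ec\subseteq\operatorname{image}$). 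Second, the equivalence you invoke is $\cat(T)/(\shift T)\simeq\modl$, not $\cat/(\shift T)\simeq\modl$: for a rigid but non-cluster-tilting $T$, the functor $\cat(T,-)$ is full and dense only on $\cat(T)$, so the lift of a minimal presentation must be carried out inside $\add T$ (lift $P_1\fl P_0$ to a morphism $U_1\fl U_0$ of $\add T$ and take its cone), which is what your construction in fact does.
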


\begin{proof}
Let $X\stackrel{f}{\gfl}Y$ be a morphism in $\cat$ factoring through $\add\shift T$.
Recall that $\cbart=T\ast \shift \tbar$.
Assume that $X$ belongs to $\cbart$, and let  $\vbar_1 \fl V_0\fl X \fl \shift \vbar_1$ be a triangle in $\cat$
with $V_0\in\add T$ and $\vbar_1\in\add\tbar$.
Since $T$ is rigid and $f$ factors through $\add\shift T$, the composition $V_0\fl X\fl Y$ vanishes,
and $f$ factors through $\shift\vbar_1$. This implies the first part of the lemma (the fullness of $\cat(T,-)$ follows
from~\cite[Prop. 6.2]{IY}; see also~\cite[Lemma 4.3]{BMloc1}).

For any $M$ in $\operatorname{mod}\Lambda$, let $X\in\ct$ be such that
$X$ has no summands in $\add\shift T$ and $\cat(T,X)\simeq M$.
Let $U_\beta \fl U_\al \fl X \fl \shift U_\beta$ be a triangle with
$U_\al, U_\beta \in \add T$ and $U_\al \fl X$ right-minimal. Then
$\cat(T,U_\beta)\fl\cat(T,U_\al)\fl\cat(T,X)\fl0$ is a minimal
projective presentation of $\cat(T,X)$, and
the dimension of $\ext^1_\Lambda(M,S_j)$ is the multiplicity
of $P_j$ in $\cat(T,U_\beta)$.
\end{proof}

Dually, we obtain the following:

\begin{lemma}\label{lemma: ecprime}
The functor $D\cat(-,\shift T')$ induces a fully faithful functor
\[\cbart / (\tbar) \gfl\operatorname{mod}\Lambda'.\]
Its essential image is $\ec'$.
\end{lemma}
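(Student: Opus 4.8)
The plan is to obtain Lemma~\ref{lemma: ecprime} as the formal dual of Lemma~\ref{lemma: ec}, applying the latter inside the opposite category $\cat^{\text{op}}$ to the rigid object $\shift T'$ and then composing with the duality $D$ to pass from right to left $\Lambda'$-modules. The first task is to fix the translation dictionary. In $\cat^{\text{op}}$ the suspension is $\susm$; the rigid object $T$ is replaced by $\shift T'$, its retained summand $\tbar$ by $\shift\tbar$, and the mutated summand $R$ by $\shift R^\ast$. Under this substitution the covariant functor $\cat(T,-)$ becomes the contravariant $\cat(-,\shift T')$, the algebra $\End_\cat(T)^{\text{op}}$ becomes $\End_\cat(\shift T')\cong\End_\cat(T')$ (whose modules are right $\Lambda'$-modules, which $D$ turns into objects of $\operatorname{mod}\Lambda'$), and the defining ideal $(\shift\tbar)$ becomes $(\susm\shift\tbar)=(\tbar)$. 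Crucially, by Lemma~\ref{lemma: compute perps}(a) the category $\cbart=T\ast\shift\tbar$ also equals $\tbar\ast\shift T'$, which is precisely the $\cat^{\text{op}}$-analogue $\shift T'\ast^{\text{op}}\tbar$ of $\cbart$; so $\cbart$ is ``self-dual'' and the source category is unchanged. With this dictionary, everything to be proved is the image under the formal dual of what Lemma~\ref{lemma: ec} already gives.

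For the reader's convenience I would also spell out the two steps directly. For faithfulness, let $f\colon X\gfl Y$ be a morphism with $Y\in\cbart$ and $\cat(f,\shift T')=0$. Writing a triangle $A\fl Y\fl B\fl\shift A$ with $A\in\add\tbar$ and $B\in\add\shift T'$ (available since $\cbart=\tbar\ast\shift T'$), rigidity of $T'$ gives $\cat(A,\shift T')=\ext^1_\cat(\tbar,T')=0$, so $Y\fl B$ is a left $\add\shift T'$-approximation; hence $\cat(f,\shift T')=0$ forces the composite $X\fl Y\fl B$ to vanish, and $f$ factors through $A\in\add\tbar$, i.e.\ $f\in(\tbar)$. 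The converse inclusion is immediate since $\cat(\tbar,\shift T')=\ext^1_\cat(\tbar,T')=0$, and $D$ is faithful; together with fullness of $\cat(-,\shift T')$ (the dual of~\cite[Prop.~6.2]{IY}, valid because $\shift T'$ is rigid) this yields the desired fully faithful functor on $\cbart/(\tbar)$.

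For the essential image, given $N\in\operatorname{mod}\Lambda'$ I would choose $Y$ in the dual of $\ct$, namely $T'\ast\shift T'$, with no summand in $\add T'$ and $D\cat(Y,\shift T')\cong N$, and complete a left-minimal $\add\shift T'$-approximation to a triangle $Y\fl U^0\fl U^1\fl\shift Y$ with $U^0,U^1\in\add\shift T'$. Applying $D\cat(-,\shift T')$ produces a minimal injective copresentation $0\fl N\fl I^0\fl I^1$, so that $\dim\ext^1_{\Lambda'}(S'_j,N)$ equals the multiplicity of the injective $I_j$ in $I^1$, equivalently the multiplicity of $\shift T'_j$ in $U^1$. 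Now $Y$ lies in $\cbart=\tbar\ast\shift T'$ exactly when $U^1\in\add\shift\tbar$, i.e.\ when no summand $\shift T'_j$ with $j>n$ occurs in $U^1$; this is precisely the condition $\ext^1_{\Lambda'}(S'_j,N)=0$ for all $j>n$ that defines $\ec'$. The one thing to be careful about is the opposite-category bookkeeping in the first paragraph---in particular the self-duality $\overline{\cat^{\text{op}}}(\shift T')=\cbart$ coming from Lemma~\ref{lemma: compute perps}(a) and the matching of ideals $(\shift\tbar)\leftrightarrow(\tbar)$---since every subsequent step is then a routine transcription of the proof of Lemma~\ref{lemma: ec}.
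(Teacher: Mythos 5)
Your proposal is correct and takes essentially the same approach as the paper: the paper's own proof of Lemma~\ref{lemma: ecprime} is exactly the remark that Lemma~\ref{lemma: ec} dualises, using the description $\cbart=\tbar\ast\shift T'$ from Lemma~\ref{lemma: compute perps}(a) and the observation that a triangle $U'_1 \fl \ubar_0 \fl X \fl \shift U'_1$ with $U'_1\in\add T'$, $\ubar_0\in\add\tbar$ yields an injective copresentation $0 \fl D\cat(X,\shift T') \fl D\cat(\shift U'_1,\shift T') \fl D\cat(\shift \ubar_0,\shift T')$. Your write-up merely makes explicit the bookkeeping the paper leaves implicit (the opposite-category dictionary, faithfulness via factorisation through $\add\tbar$, fullness from the dual of \cite[Prop.~6.2]{IY}, and the minimal injective copresentation computing $\ext^1_{\Lambda'}(S'_j,-)$), all of which is accurate.
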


\begin{proof}
The proof is dual to that of Lemma~\ref{lemma: ec}.
We use the description $\cbart=\tbar\ast \shift T'$
from Lemma~\ref{lemma: compute perps}, and note that any triangle
$U'_1 \fl \ubar_0 \fl X \fl \shift U'_1$, where $U'_1$ belongs to $\add T'$
and $\ubar_0$ belongs to $\add \tbar$, gives rise to an injective co-presentation
$0 \fl D\cat(X,\shift T') \fl D\cat(\shift U'_1,\shift T') \fl D\cat(\shift \ubar_0,\shift T')$.
\end{proof}

\noindent \emph{Proof of Theorem~\ref{theorem: equivalence module version}}:
By Lemma~\ref{lemma: ec}, the functor $\cat(T,-)$ induces
an equivalence of categories from $\cbart / (\shift\tbar)$
to $\ec$. Since $\cbart/ (\shift T')$ is isomorphic to
$\big(\cbart/(\shift\tbar) \big) / (\shift R^\ast)$,
the functor $\cat(T,-)$ induces an equivalence of categories
from $\cbart/ \add \shift T'$ to $\ec / \add\cat(T,\shift R^\ast)$.
Dually, one can use Lemma~\ref{lemma: ecprime} to notice that
the functor $D\cat(-,\shift T')$ induces an equivalence of categories
from $(\tbar\ast T')/\add T$ to $\ec'/\add D\cat(R,\shift T')$.
The statement now follows from Theorem~\ref{theorem: equivalence of categories}.\qed
\vskip 0.2cm
There are two particular cases of Theorem~\ref{theorem: equivalence module version} that 
are worth noting. They are weak forms of nearly-Morita equivalences that we call pseudo-Morita equivalences.
They occur in the case where $R$ is indecomposable,
i.e. $m=n+1$, and we make this assumption for the rest
of the section. Note that $R=T_m$ and $R^{\ast}=T'_m$.

Let $Q_m$ be the $\Lambda$-module
$\cat(T,\shift R^{\ast})=\cat(T,\shift T'_m)$
appearing in Theorem~\ref{theorem: equivalence module version}. Similarly, we have the $\Lambda'$-modules
$Q'_m=D\cat(R,\shift T')=D\cat(T_m,\shift T')$.
Then we have the following:

\begin{lemma} \label{lemma: QSm}
Suppose that $R$ is indecomposable.
Let $e$ is the idempotent for $\Lambda$ corresponding
to $T_m$. Then we have the isomorphism
$$Q_m\simeq \Lambda / \Lambda(1-e)\Lambda.$$
Furthermore, $Q_m$ is a simple object of $\ec$.
Dually, let $e'$ be the idempotent for $\Lambda'$ corresponding to $\shift T'_m$. Then we have the
isomorphism
$$Q'_m\simeq \Lambda' / \Lambda(1-e')\Lambda'.$$ Furthermore, $Q'_m$ is an indecomposable
$\Lambda'$-module and a simple object of $\ec'$.
\end{lemma}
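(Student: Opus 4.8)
The plan is to treat the three assertions in turn: the module isomorphism $Q_m\cong\Lambda/\Lambda(1-e)\Lambda$, the simplicity of $Q_m$ in $\ec$, and then the dual statements for $Q'_m$. First I would compute $Q_m$ from the exchange triangle. Writing $R=T_m$ and letting $R^\ast \fl B \fl R \fl \shift R^\ast$ be the defining triangle with $B\in\add\tbar$ a minimal right $\add\tbar$-approximation of $R$ (see Remark~\ref{remark: exchange triangles}), I apply $\cat(T,-)$. Since $B\in\add T$ and $T$ is rigid, $\cat(T,\shift B)=0$, so the long exact sequence collapses to
\[\cat(T,B)\stackrel{b_\ast}{\fl}\cat(T,R)\fl Q_m\fl 0,\]
identifying $Q_m=\cat(T,\shift R^\ast)$ with $\coke(b_\ast)$. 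Because $b$ is a right $\add\tbar$-approximation, every morphism $T\fl T_m$ factoring through $\add\tbar$ factors through $b$, while $b$ itself factors through $\add\tbar$; hence $\im(b_\ast)$ is exactly the space $(\tbar)(T,T_m)$ of morphisms $T\fl T_m$ factoring through $\add\tbar$, and $Q_m\cong P_m/(\tbar)(T,T_m)$, where $P_m=\cat(T,T_m)=\Lambda e$.

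Next I would match this with $\Lambda/\Lambda(1-e)\Lambda$. Decomposing $\Lambda=\Lambda e\oplus\Lambda(1-e)$ on the right by $e$ and $1-e$, and using $\Lambda(1-e)\Lambda(1-e)=\Lambda(1-e)$, one obtains $\Lambda/\Lambda(1-e)\Lambda\cong \Lambda e/\Lambda(1-e)\Lambda e$; it then remains to recognise $\Lambda(1-e)\Lambda e$, as a submodule of $\Lambda e=P_m$, as the morphisms $T\fl T_m$ factoring through $\add\tbar$. This is the one genuinely fiddly book-keeping point, since it requires translating the categorical ideal $(\tbar)$ through the opposite-algebra convention; as $1-e$ is the idempotent projecting onto the $\tbar$-summands of $T$, the ideal $\Lambda(1-e)\Lambda e$ is precisely $(\tbar)(T,T_m)$, giving $Q_m\cong\Lambda/\Lambda(1-e)\Lambda$.

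For simplicity in $\ec$, I first note that $Q_m\in\ec$: since $\shift R^\ast\in\add\shift T'\subseteq\tbar\ast\shift T'=\cbart$ by Lemma~\ref{lemma: compute perps}, the module $Q_m$ lies in the essential image of the functor of Lemma~\ref{lemma: ec}, which is $\ec$; equivalently $\ext^1_\Lambda(Q_m,S_m)=0$. Moreover $Q_m$ is cyclic and concentrated at the vertex $m$, so it has simple top $S_m$. Now suppose $0\fl A\fl Q_m\fl C\fl 0$ is a conflation in $\ec$ with $A$ nonzero and proper. Then $A\subseteq\operatorname{rad}Q_m$, so every map $Q_m\fl S_m$ vanishes on $A$ and the restriction $\homph_\Lambda(Q_m,S_m)\fl\homph_\Lambda(A,S_m)$ is zero. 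Feeding this into the long exact sequence of $\homph_\Lambda(-,S_m)$ and using $\ext^1_\Lambda(Q_m,S_m)=0$ yields $\ext^1_\Lambda(C,S_m)\cong\homph_\Lambda(A,S_m)$, which is nonzero because $A\neq0$ is supported at $m$ and hence surjects onto $S_m$. Thus $C\notin\ec$, contradicting that the sequence is a conflation in $\ec$. Hence $Q_m$ has no nontrivial admissible subobject and is a simple object of $\ec$. I expect this $\ext$-computation to be the crux of the lemma, as it is the only step that really uses the defining condition of $\ec$.

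Finally, the statements for $Q'_m$ follow by the dual argument, replacing $\cat(T,-)$ and Lemma~\ref{lemma: ec} by $D\cat(-,\shift T')$ and Lemma~\ref{lemma: ecprime}, and interchanging tops and quotients with socles and submodules. In particular $Q'_m=D\cat(T_m,\shift T')\cong\Lambda'/\Lambda'(1-e')\Lambda'$; it has simple socle $S'_m$ and is therefore indecomposable, and dualising the $\ext$-argument (now using $\ext^1_{\Lambda'}(S'_m,-)$ applied to a proper nonzero quotient) shows that it is a simple object of $\ec'$.
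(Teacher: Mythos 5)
Your proof of the isomorphism $Q_m\simeq\Lambda/\Lambda(1-e)\Lambda$ is correct and is essentially the paper's: both arguments apply $\cat(T,-)$ to the exchange triangle, use rigidity of $T$ to truncate the long exact sequence, identify the image of $\cat(T,B)\fl\cat(T,R)$ with the morphisms $T\fl T_m$ factoring through $\add\tbar$ (using that $B\fl R$ is a right $\add\tbar$-approximation), and match $(\tbar)(T,T_m)$ with $\Lambda(1-e)\Lambda e$; the paper merely packages the same computation as the chain $\Lambda/\Lambda(1-e)\Lambda\simeq\cat(T,T)/(\tbar)\simeq\frac{\cat}{(\add\tbar)}(T_m,T_m)\simeq\cat(T_m,\shift T'_m)\simeq Q_m$. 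Where you genuinely diverge is the simplicity step. The paper never leaves the quotient algebra $\Gamma:=\Lambda/\Lambda(1-e)\Lambda$: since the ideal $\Lambda(1-e)\Lambda$ is idempotent, $\ext^1_\Lambda$ and $\ext^1_\Gamma$ agree on $\Gamma$-modules, so $Q_m={}_\Gamma\Gamma$ lies in $\ec$ by projectivity, and any nonzero submodule $N\subseteq Q_m$ lying in $\ec$ is projective, hence free, over the local algebra $\Gamma$, forcing $N=Q_m$; this gives the slightly stronger statement that $Q_m$ has no nonzero proper submodule in $\ec$ at all. You instead get $Q_m\in\ec$ from Lemma~\ref{lemma: ec} (via $\shift R^\ast\in\cbart$) and rule out a nontrivial conflation $0\fl A\fl Q_m\fl C\fl 0$ by a $\homph$--$\ext$ long exact sequence, using $A\subseteq\operatorname{rad}Q_m$ to get $\ext^1_\Lambda(C,S_m)\simeq\homph_\Lambda(A,S_m)\neq 0$. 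Both mechanisms are valid and elementary; the paper's buys the stronger subobject statement, while yours proves exactly the exact-category notion of simplicity claimed and avoids the $\ext$-transfer to the quotient algebra, which the paper uses without comment and which holds only because the ideal is idempotent.

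One caveat, which you inherit from the statement and about which the paper's proof is equally brief: the honest dualisation of the first part (the first part applied in $\cat^{\text{op}}$ to the rigid object $\shift T'$) identifies $\cat(R,\shift T')$ with $\Lambda'/\Lambda'(1-e')\Lambda'$ as a \emph{right} $\Lambda'$-module, so that $Q'_m=D\cat(R,\shift T')$ is identified with the $k$-dual of that right module, equivalently the largest submodule of the injective $D\cat(\shift T'_m,\shift T')$ supported at the vertex $m$. This is exactly why $Q'_m$ has simple socle, as you assert; but identifying that dual with $\Lambda'/\Lambda'(1-e')\Lambda'$ itself as a \emph{left} module, which is what the displayed isomorphism claims, amounts to self-duality of the local algebra $\Lambda'/\Lambda'(1-e')\Lambda'$, and one cannot sidestep this by applying the first part to $T'$ instead, since mutation of non-maximal rigid objects is not involutive. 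Neither your proposal nor the paper's proof addresses this point, so it is a shared imprecision rather than a gap in your argument relative to the paper's own.
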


\begin{proof}
The long exact sequences associated with the exchange triangle
$T'_m \fl \ubar_m \fl T_m \fl \shift T'_m$ of Remark~\ref{remark: exchange triangles}
show that the functor $\cat(-,\shift T'_m)$
vanishes on $\add (\tbar)$ and that the $\Lambda$-module
$\cat(T_m,\shift T'_m)$ is isomorphic to the module
$\cat/(\add\tbar)(T_m,T_m)$.
We thus have an isomorphism of $\Lambda$-modules:
$$
\frac{\Lambda}{\Lambda(1-e)\Lambda}\simeq
\frac{\cat(T,T)}{(\tbar)}\simeq
\frac{\cat}{(\add\tbar)}(T_m,T_m) \simeq
\cat(T_m,\shift T'_m)\simeq Q_m.
$$
Similarly, using the exchange triangle
as above, we obtain an isomorphism between the
$\Lambda'$-modules
$D\cat(T_m,\shift T')$ and $D\cat/(\add\shift\tbar)(\shift T',\shift T')$,
the latter being isomorphic to $Q'_m$.

Since $Q_m$ is projective over $\Lambda/\Lambda(1-e)\Lambda$, we have
$$\ext^1_{\Lambda}(Q_m,S_m)\simeq \ext^1_{\Lambda/\Lambda(1-e)\Lambda}(Q_m,S_m)=0,$$
and therefore $Q_m$ lies in $\ec$.
If $N$ is a non-trivial submodule of $Q_m$
lying in $\ec$, then $N$ is also a $\Lambda/\Lambda(1-e)\Lambda$-module satisfying $\ext^1_{\Lambda(1-e)\Lambda}(N,S_m)=0$. Since $S_m$ is the only simple
$\Lambda(1-e)\Lambda$-module, $N$ is projective over
$\Lambda(1-e)\Lambda$, so it must equal $Q_m$. It follows that $Q_m$ is a simple object of the
exact category $\ec$.
Since $\ec$ is closed under direct
summands, it now follows that
$Q_m$ is an indecomposable $\Lambda$-module.
The proofs of the duals of these last two statements are similar.
\end{proof}

\begin{cor}\label{corollary: loop}
Suppose that $\cat$ satisfies the assumptions in
Section~\ref{s:setup} and that it has a Serre functor.
Suppose further that
$R$ is indecomposable,
Then there is an equivalence of categories:
$$\ec / \add Q_m \simeq \ec' / \add Q'_m.$$
\end{cor}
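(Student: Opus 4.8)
The plan is to recognise Corollary~\ref{corollary: loop} as the specialisation of Theorem~\ref{theorem: equivalence module version} to the case where $R$ is indecomposable, so that essentially no new work is required beyond unwinding the definitions of $Q_m$ and $Q'_m$. No genuine obstacle arises; the content lies entirely in the interpretation supplied by Lemma~\ref{lemma: QSm}.

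First I would record the identifications coming from the assumption $m=n+1$. Since $R$ is indecomposable we have $R=T_m$ and $R^\ast=T'_m$, as noted just before the statement. By the definitions of $Q_m$ and $Q'_m$, this gives
$$Q_m=\cat(T,\shift R^\ast) \quad\text{and}\quad Q'_m=D\cat(R,\shift T'),$$
and hence $\add Q_m=\add\cat(T,\shift R^\ast)$ and $\add Q'_m=\add D\cat(R,\shift T')$. Substituting these equalities into the equivalence supplied by Theorem~\ref{theorem: equivalence module version}, namely
$$\ec / \add\cat(T,\shift R^\ast) \simeq \ec' / \add D\cat(R,\shift T'),$$
produces exactly $\ec / \add Q_m \simeq \ec' / \add Q'_m$, which is the desired statement.

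The step that deserves comment is not an obstacle to the proof but rather its significance: by Lemma~\ref{lemma: QSm} the objects $Q_m$ and $Q'_m$ being factored out are \emph{simple} objects of the exact categories $\ec$ and $\ec'$, respectively. Thus the corollary exhibits the promised pseudo-Morita equivalence — an equivalence of the exact categories $\ec$ and $\ec'$ after killing a single simple object on each side — which is the closest available analogue of nearly-Morita equivalence in this setting of a mutation at an indecomposable, non-maximal rigid summand.
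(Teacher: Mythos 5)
Your proposal is correct and matches the paper's own proof, which likewise deduces the corollary directly from Theorem~\ref{theorem: equivalence module version} (using that $Q_m=\cat(T,\shift R^\ast)$ and $Q'_m=D\cat(R,\shift T')$ by definition) together with Lemma~\ref{lemma: QSm}. Your remark that Lemma~\ref{lemma: QSm} serves to interpret $Q_m$ and $Q'_m$ as simple objects of $\ec$ and $\ec'$, rather than being needed for the formal substitution, is also accurate.
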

\begin{proof}
This follows from Theorem~\ref{theorem: equivalence module version} and Lemma~\ref{lemma: QSm}.
\end{proof}

\begin{cor}\label{corollary: no loop}
Suppose that the assumptions in
Corollary~\ref{corollary: loop} hold,
and, in addition, that the Gabriel quiver of
$\Lambda$ has no loop at the vertex corresponding to $R$.
Then there is an equivalence of categories:
$$\ec / \add S_m \simeq \ec' / \add S'_m.$$
\end{cor}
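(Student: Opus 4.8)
The plan is to deduce this from Corollary~\ref{corollary: loop}. Since $\ec/\add Q_m \simeq \ec'/\add Q'_m$ already holds and each quotient category depends only on $\add Q_m$, respectively $\add Q'_m$, it suffices to prove that the no-loop hypothesis forces $Q_m\cong S_m$ and $Q'_m\cong S'_m$; then $\add Q_m=\add S_m$ and $\add Q'_m=\add S'_m$, and the corollary gives the equivalence directly.

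First I would treat the $\Lambda$-side. By Lemma~\ref{lemma: QSm} we have $Q_m\cong\Lambda/\Lambda(1-e)\Lambda$, which (as in the proof of that lemma) is the local algebra $A:=\End_\cat(R)/(\tbar)$; its Gabriel quiver is the single vertex $m$, carrying exactly the loops of $\Lambda$ at $m$, since $\operatorname{rad}(A)/\operatorname{rad}^2(A)\cong e(\operatorname{rad}\Lambda/\operatorname{rad}^2\Lambda)e$. The hypothesis that $\Lambda$ has no loop at $m$ therefore gives $\operatorname{rad}(A)=\operatorname{rad}^2(A)$, hence $\operatorname{rad}(A)=0$ by Nakayama, so $A\cong S_m$ and $Q_m\cong S_m$.

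The key observation for the $\Lambda'$-side is that $Q_m$ and $Q'_m$ are $k$-dual. Indeed, since $\shift R^\ast\in\tbar^\perp$ we have $\cat(\tbar,\shift R^\ast)=0$, so $Q_m=\cat(T,\shift R^\ast)=\cat(R,\shift R^\ast)=\ext^1_\cat(R,R^\ast)$; and since $T$ is rigid we have $\cat(R,\shift\tbar)=\ext^1_\cat(R,\tbar)=0$, so $Q'_m=D\cat(R,\shift T')=D\cat(R,\shift R^\ast)=D\,\ext^1_\cat(R,R^\ast)$. Hence $Q'_m\cong DQ_m$, and in particular $\dim_k Q'_m=\dim_k Q_m=\dim_k S_m=1$. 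Now Lemma~\ref{lemma: QSm} also gives $Q'_m\cong\Lambda'/\Lambda'(1-e')\Lambda'$, an indecomposable module supported at the vertex $m$, so its only composition factor is $S'_m$; being one-dimensional, $Q'_m\cong S'_m$. Feeding $Q_m\cong S_m$ and $Q'_m\cong S'_m$ into Corollary~\ref{corollary: loop} yields $\ec/\add S_m\simeq\ec'/\add S'_m$.

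The step I expect to require the most care is the transfer of the no-loop condition from $\Lambda$ to $\Lambda'$: rather than comparing the two Gabriel quivers directly, the argument routes this through the dimension equality $\dim_k Q_m=\dim_k Q'_m$ supplied by the $k$-duality $Q'_m\cong DQ_m$, which itself rests on the perpendicularity $\shift R^\ast\in\tbar^\perp$ together with rigidity of $T$. One should also pin down the field-theoretic bookkeeping behind ``$Q_m\cong S_m\Leftrightarrow$ no loop'' (equivalently $\dim_k S_m=1$ and $\dim_k S'_m=1$), which is immediate when $k$ is algebraically closed, as is implicit in speaking of the Gabriel quiver.
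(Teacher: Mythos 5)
Your proposal is correct (with the caveat you flag), but it routes the key step differently from the paper. The paper's proof is a two-line deduction: it cites \cite[Proposition 2.6(1)]{IY}, under whose mutation equivalence (which identifies $\End_\cat(R)/(\tbar)$ with $\End_\cat(R^\ast)/(\tbar)$ as algebras) the object $R^\ast$ is again indecomposable \emph{and has no loop}; the implication ``no loop $\Rightarrow Q\cong S$'' is then applied on both sides simultaneously, though the paper leaves that implication implicit. You instead keep all the quiver-theoretic input on the $\Lambda$-side: you prove ``no loop at $m$ $\Rightarrow Q_m\cong S_m$'' explicitly (your radical computation $\operatorname{rad}(A)/\operatorname{rad}^2(A)\cong e(\operatorname{rad}\Lambda/\operatorname{rad}^2\Lambda)e$ plus Nakayama is correct), and you transfer to the $\Lambda'$-side via the vector-space duality $Q'_m\cong DQ_m$ --- a computation which is indeed already implicit in the proof of Lemma~\ref{lemma: QSm}, both modules being identified with $\cat(R,\shift R^\ast)$ up to duality --- followed by a dimension count. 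What each approach buys: the paper's argument is symmetric and valid over an arbitrary field (the setup assumes only a field $k$), because the Iyama--Yoshino equivalence transfers the no-loop condition as an algebra isomorphism; your dimension count needs $\dim_k Q'_m=1$, i.e.\ $\dim_k S_m=\dim_k S'_m=1$, so it requires $k$ algebraically closed (or at least split simples). You flag this correctly, and the phrase ``Gabriel quiver'' arguably presupposes that setting anyway, but it is a genuine loss of generality relative to the paper's proof. In exchange, your argument is more self-contained: it avoids invoking \cite{IY} for the transfer, uses only perpendicularity facts ($\shift R^\ast\in\tbar^\perp$ and rigidity of $T$) already established in Section~\ref{s:setup}, and makes explicit the no-loop implication that the paper's proof asserts without argument.
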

\begin{proof}
This is a particular case of Corollary~\ref{corollary: loop}. Indeed,
by \cite[Proposition 2.6 (1)]{IY} $R^\ast$ is also indecomposable
and has no loop. This implies that $Q_m$ and $Q'_m$ are isomorphic to
$S_m$ and $S'_m$ respectively.
\end{proof}

\section{Localisation}
\subsection{Notation and statement of main results}
\label{ssection: localisation}
We continue with the assumptions and notation from Section~\ref{s:setup}. We do not assume here that
$\cat$ has a Serre functor, except in Corollary~\ref{corollary: modllocalisation}.
Also, contrary to \cite{BMloc1}, we do not make any
skeletal smallness assumption. This is because all the localisations
that we consider are shown to be equivalent to a 
subquotient of $\cat$. Therefore no set-theoretic 
difficulties arise, and the localisations we consider 
are all categories without passing to a higher universe.

Recall that, by~\cite{KR1,BMloc1}, the
functor $\cat(T,-)$ induces an equivalence of
categories from $\cat(T)/\shift T$ to $\modl$.
In particular, it is dense and full when restricted to $\cat(T)$.

\begin{defi}
Let $\bc$ be the full subcategory of $\modl$ given by 
the (essential) image of $\tbar^\perp$ under $\cat(T,-)$. Let $\sc_{\bc,0}$ be the class of all epimorphisms
$f\in \modl$ whose kernel belongs to $\bc$.
Dually, we let $\bc'$ be the full subcategory of $\modl'$ given by the (essential) image of
${}^{\perp}\shift \tbar$ under
$D\cat(-,\shift T')$
and set $\sc_{0,\bc'}$ to be the
class of all monomorphisms $g\in \modl'$
whose cokernel belongs to $\bc'$.
\end{defi}

Let $F$ be the composition of the fully faithful functor $\cbart/\shift\tbar \fl \cat(T)/\shift T \fl \modl$
and the localisation functor $\modl \stackrel{L_{\sc_{\bc,0}}}{\gfl}(\modl)_{\sc_{\bc,0}}$.
Then, since
$\cat(T,\shift R^\ast)$ belongs to $\bc$, we have that
$F(\shift R^\ast )\simeq 0$ in $(\modl)_{\sc_{\bc,0}}$.
Hence, $F$ induces a functor $\fbar$ as in the following diagram:
\begin{equation}
\label{e:Fdiagram}
\xymatrix{
\cbart/(\shift\tbar) \ar@{->>}[d] \ar@{^(->}[r] \ar[drr]^F
& \cat(T)/(\shift T) \ar[r]^\simeq
& \modl \ar[d]^{L_{\sc_{\bc,0}}} \\
\cbart/(\shift T') \ar@{-->}[rr]^{\fbar}
&& (\modl)_{\sc_{\bc,0}}.
}
\end{equation}

Our main aim in this section is to show that
the following holds:

\begin{theo}\label{theorem: fbar equivalence}
The functor $\fbar : \cbart/(\shift T') \gfl (\modl)_{\sc_{\bc,0}}$ is an equivalence
of categories. Dually, there is an equivalence
$\cbart/(T) \gfl (\modl')_{\sc_{0,\bc'}}$.
\end{theo}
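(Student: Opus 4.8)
The plan is to prove that $\fbar : \cbart/(\shift T') \to (\modl)_{\sc_{\bc,0}}$ is an equivalence by verifying that it is essentially surjective, full, and faithful, exploiting the factorisation in diagram~\eqref{e:Fdiagram}. The functor $F$ is the composite of the fully faithful functor $\cbart/(\shift\tbar)\to\cat(T)/(\shift T)\xrightarrow{\simeq}\modl$ (which lands in the essential image $\ec$ by Lemma~\ref{lemma: ec}) with the localisation $L_{\sc_{\bc,0}}$. Since $\cat(T,\shift R^\ast)\in\bc$, the object $\shift R^\ast$ is killed, which is exactly why $\fbar$ factors through the further quotient $\cbart/(\shift T')$. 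The key idea is that localising $\modl$ at $\sc_{\bc,0}$ should have the same effect as first restricting to $\ec$ and then killing $\add Q_m$ (i.e.\ the image $\cat(T,\shift R^\ast)$ of $\shift T'$), so that $\fbar$ becomes a quotient-by-an-ideal realisation of a localisation.

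\emph{Essential surjectivity and fullness.}
First I would use density of $\cat(T,-):\cat(T)\to\modl$ to write an arbitrary $M\in\modl$ as $\cat(T,X)$ with $X\in\ct\subseteq\cbart$; its image under $L_{\sc_{\bc,0}}$ is then $\fbar$ of the class of $X$, giving essential surjectivity. Fullness is the crucial analytic point: a morphism in the localisation $(\modl)_{\sc_{\bc,0}}$ is represented by a \emph{roof} $M\xleftarrow{s}M''\xrightarrow{g}N$ with $s\in\sc_{\bc,0}$, i.e.\ $s$ an epimorphism with kernel in $\bc$. Here I would show that inverting such $s$ changes nothing on objects coming from $\ec$: using the defining property $\ext^1_\Lambda(M,S_j)=0$ of $\ec$ together with the identification $\bc=\im(\tbar^\perp)$, any extension of an object of $\bc$ by something giving an object of $\ec$ splits off the $\bc$-part, so each roof can be straightened into an honest morphism between preimages in $\cbart/(\shift\tbar)$. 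Translating across Lemma~\ref{lemma: ec} and then quotienting by $\add\shift T'$ (equivalently $\add Q_m=\add\cat(T,\shift R^\ast)$, which is what $\cbart/(\shift\tbar)\to\cbart/(\shift T')$ does) yields fullness.

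\emph{Faithfulness.}
For faithfulness I would take a morphism $f$ in $\cbart/(\shift T')$ with $\fbar(f)=0$ and show $f=0$. Lifting $f$ to $\tilde f$ in $\cbart/(\shift\tbar)$, its image $\cat(T,\tilde f)$ becomes zero in the localisation, which by the calculus of fractions means it factors so that it is annihilated after inverting some $s\in\sc_{\bc,0}$; concretely the image of $\cat(T,\tilde f)$ lands in $\bc$, i.e.\ in the image of $\tbar^\perp$. Pulling this back through the fully faithful functor of Lemma~\ref{lemma: ec}, $\tilde f$ factors through $\tbar^\perp\cap\cbart=\add\shift T'$ by Lemma~\ref{lemma: compute perps}(b). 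Hence $f=0$ already in $\cbart/(\shift T')$. Finally the dual statement $\cbart/(T)\to(\modl')_{\sc_{0,\bc'}}$ follows by applying the same argument in $\cat^{\mathrm{op}}$, replacing $\cat(T,-)$ by $D\cat(-,\shift T')$ and Lemma~\ref{lemma: ec} by Lemma~\ref{lemma: ecprime}, with epimorphisms/kernels traded for monomorphisms/cokernels.

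\emph{Main obstacle.}
The hard part will be fullness, specifically controlling the roofs in the localisation: I must verify that the class $\sc_{\bc,0}$ admits a calculus of fractions (or at least enough of one) so that morphisms in $(\modl)_{\sc_{\bc,0}}$ between objects of $\ec$ are represented by genuine module maps, and that inverting a $\bc$-kernel epimorphism never introduces new morphisms out of an $\ec$-object. This is precisely where the exactness of $\ec$ and the vanishing condition $\ext^1_\Lambda(-,S_j)=0$ must be combined with the characterisation $\bc=\im(\tbar^\perp)$ under $\cat(T,-)$; getting the lift of a fraction back to $\cbart$ compatibly is the delicate step, and everything else is bookkeeping through the equivalence of Lemma~\ref{lemma: ec}.
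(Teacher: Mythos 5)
Your overall skeleton (dense, full, faithful, then dualise) matches the paper's, but two of your three steps have genuine gaps. First, essential surjectivity: you write an arbitrary $M\in\modl$ as $\cat(T,X)$ with ``$X\in\ct\subseteq\cbart$'', but this inclusion is backwards. Since $\add\tbar\subseteq\add T$, we have $\cbart=T\ast\shift\tbar\subseteq T\ast\shift T=\ct$, and the containment is strict in general; the essential image of $\cbart$ under $\cat(T,-)$ is the proper subcategory $\ec$ of Lemma~\ref{lemma: ec} (if $\ct\subseteq\cbart$ held, $\ec$ would be all of $\modl$ and much of the paper would be vacuous). Density of $F$ is therefore \emph{not} automatic: it holds only because the approximation map $\eta_X\colon R_0X\to X$ of Lemma~\ref{lemma: cbart contravariantly finite} becomes invertible after localisation --- $\cat(T,\eta_X)$ is an epimorphism whose kernel receives a surjection from $\cat(T,Z)$ with $Z\in\tbar^\perp$, and one needs closure of $\bc$ under images (Lemma~\ref{lemma: B stable}) to conclude $\cat(T,\eta_X)\in\sc_{\bc,0}$. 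This is exactly the paper's Proposition~\ref{proposition: F dense}, and it is absent from your argument.

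Second, both your fullness and your faithfulness arguments presuppose a calculus of fractions for $\sc_{\bc,0}$: you assume morphisms in $(\modl)_{\sc_{\bc,0}}$ are single roofs, and (for faithfulness) that a morphism vanishing in the localisation must be annihilated by, hence factor through the kernel of, some $s\in\sc_{\bc,0}$. Neither fact is established, by you or by the paper; in a localisation built from zigzags these statements can fail. You name this as the ``main obstacle'' for fullness but leave it open, and you use it silently for faithfulness, where it is equally essential. The paper avoids fractions altogether: for fullness it proves a hook-lifting statement (Proposition~\ref{proposition: F full}, resting on Lemmas~\ref{lemma: Z in CT}, \ref{lemma: B iff XT} and~\ref{lemma: Stilde and SB0}, the last identifying $\cat(T,\widetilde{\sc})=\sc_{\bc,0}$ so that every $\cat(T,s)\in\sc_{\bc,0}$ lifts to $s\in\widetilde{\sc}$) and then inducts on the number of hooks in an arbitrary zigzag; for faithfulness it constructs a functor \emph{out of} the localisation, $\homph_\Lambda(U,-)$ with $U=\cat(T,\tbar)$, into $\modlbar$ (Lemmas~\ref{lemma: modlbar} and~\ref{lemma: modl loc to modlbar}), so that $\fbar(u)=\fbar(v)$ forces $\cat(\tbar,u)=\cat(\tbar,v)$, whence $u-v\in(\tbar^\perp)$ by~\cite[Lemma 2.3]{BMloc1} and then $u-v\in(\shift T')$ using $\cbart=\tbar\ast\shift T'$. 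Your one good new idea --- that $\ext^1_\Lambda(M,K)=0$ for $M\in\ec$ and $K\in\bc$, so hooks out of $\ec$-objects can be straightened module-theoretically --- is correct (it follows from the projective presentation $\cat(T,\tbar_1)\to\cat(T,T_0)\to M\to 0$ and $\homph_\Lambda(\cat(T,\tbar_1),\bc)=0$) and could replace the paper's triangulated lifting computation, but only if you run it along zigzags by induction rather than assuming roofs; as written, the proposal closes neither gap.
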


This has two key corollaries, which we state below,
after a lemma needed in the proof of the first one.

\begin{lemma}\label{lemma: Tbarintersection}
For any object $M\in\bc$, there exists
$X\in\cat(T)\cap\tbar^\perp$ such that
$\cat(T,X)\simeq M$.
\end{lemma}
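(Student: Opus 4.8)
The plan is to produce, for a given $M\in\bc$, an object $X$ that simultaneously lies in $\ct$ and in $\tbar^\perp$ and whose image under $\cat(T,-)$ recovers $M$. Since $M$ is in $\bc$, by definition there is some object $Y\in\tbar^\perp$ with $\cat(T,Y)\simeq M$. The difficulty is that such a $Y$ need not lie in $\ct$, so the first task is to replace $Y$ by a suitable object of $\ct$ while staying inside $\tbar^\perp$ and without changing the value of $\cat(T,-)$ up to isomorphism.

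First I would recall that $\cat(T,-)$ induces an equivalence $\ct/(\shift T)\simeq\modl$, so there is already \emph{some} $X_0\in\ct$ with no summands in $\add\shift T$ and $\cat(T,X_0)\simeq M$; the point is to arrange additionally that $X_0\in\tbar^\perp$. To do this I would take the right $\cbart$-approximation machinery of Lemma~\ref{lemma: cbart contravariantly finite}, or more directly exploit that $Y\in\tbar^\perp$. The natural move is to form a minimal right $\add T$-approximation $T_0\fl Y$ and complete it to a triangle $K\fl T_0\fl Y\fl\shift K$; applying $\cat(T,-)$ and using $Y\in\tbar^\perp$ should let me control the summands. Since $Y\in\tbar^\perp$, the approximation $T_0\to Y$ can be taken in $\add T$ but with the $\tbar$-part contributing nothing to the module, so the relevant cone lands in $\ct$ and the functor $\cat(T,-)$ is unchanged.

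The cleanest route, however, is to use that $\tbar^\perp$ is closed under the relevant constructions and to reduce to a representative that is manifestly in $\ct\cap\tbar^\perp$. I would argue as follows: given $Y\in\tbar^\perp$ with $\cat(T,Y)\simeq M$, decompose $Y$ into an object $X\in\ct$ plus summands in $\add\shift T$; the summands in $\add\shift T$ must in fact lie in $\tbar^\perp\cap\add\shift T$, and one checks using Lemma~\ref{lemma: compute perps}(b) (which identifies $\cbart\cap\tbar^\perp=\add\shift T'$) and rigidity that these extra summands are killed by $\cat(T,-)$ while the core $X$ stays in $\tbar^\perp$. Because $\tbar^\perp$ is closed under direct summands, $X$ itself lies in $\tbar^\perp$, and $X\in\ct$ by construction, so $X\in\ct\cap\tbar^\perp$ with $\cat(T,X)\simeq\cat(T,Y)\simeq M$.

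The main obstacle will be verifying that one can simultaneously keep the object in $\ct$ and in $\tbar^\perp$: the perpendicular condition $\tbar^\perp$ is not obviously preserved when one passes from an arbitrary preimage $Y$ to a $\ct$-representative, and conversely the $\ct$-membership is not automatic for objects of $\tbar^\perp$. The key technical point is therefore to show that the discrepancy between $Y$ and its $\ct$-core consists only of summands in $\add\shift T\cap\tbar^\perp=\add\shift T'$, which is precisely where Lemma~\ref{lemma: compute perps} does the essential work; once that is in place, closure of $\tbar^\perp$ under summands finishes the argument.
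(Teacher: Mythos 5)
Your ``cleanest route'' has a genuine gap at its central step: you cannot, in general, decompose a given $Y\in\tbar^\perp$ with $\cat(T,Y)\simeq M$ as $X\oplus W$ with $X\in\ct$ and $W\in\add\shift T$. The equivalence $\ct/(\shift T)\simeq\modl$ only guarantees that \emph{some} object of $\ct$ realises $M$; it gives no structural control over an arbitrary preimage $Y$. Objects with isomorphic images under $\cat(T,-)$ differ by far more than $\add\shift T$-summands, because the functor kills the whole ideal $(T^\perp)$, and when $T$ is not cluster-tilting the subcategory $T^\perp$ is strictly larger than $\add\shift T$: it contains indecomposables lying neither in $\ct$ nor in $\add\shift T$. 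A preimage $Y$ may be indecomposable and outside $\ct$, or a non-split extension, in which case no direct-sum decomposition of the required form exists. Lemma~\ref{lemma: compute perps}(b) cannot repair this: it computes $\cbart\cap\tbar^\perp$ and says nothing about arbitrary objects of $\tbar^\perp$. Your fallback, the approximation machinery of Lemma~\ref{lemma: cbart contravariantly finite}, is also insufficient, for a different reason: that lemma produces a triangle $Z\fl R_0Y\fl Y\fl\shift Z$ with only $Z\in\tbar^\perp$ (not $Z\in T^\perp$), so applying $\cat(T,-)$ yields an epimorphism $\cat(T,R_0Y)\fl\cat(T,Y)$ with kernel in $\bc$, not necessarily an isomorphism.

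What the paper does instead is to replace $Y$ by a genuinely different object via an approximation triangle with \emph{full} perpendicular control: by~\cite[Lemma 3.3]{BMloc1} there is a triangle $Z\fl X\fl Y\stackrel{\eps}{\fl}\shift Z$ with $X\in\ct$, $Z\in T^\perp$ and $\eps\in(T^\perp)$. Applying $\cat(T,-)$, the conditions $\cat(T,Z)=0$ and $\cat(T,\eps)=0$ give $\cat(T,X)\simeq\cat(T,Y)\simeq M$; applying $\cat(\tbar,-)$ and using $Z\in T^\perp\subseteq\tbar^\perp$ together with $Y\in\tbar^\perp$ gives $X\in\tbar^\perp$. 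Your middle paragraph (one minimal right $\add T$-approximation of $Y$) gestures at this construction but is incomplete: a single approximation step does not land in $\ct$, since the cocone of the approximation need not lie in $\add T$, and securing the crucial condition $Z\in T^\perp$ requires the two-step construction with the octahedral axiom carried out in~\cite[Lemma 3.3]{BMloc1}.
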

\begin{proof}
For any object $M\in\bc$, there exists an object $Y\in\tbar^\perp$ such that $\cat(T,Y) \simeq M$.
By~\cite[Lemma 3.3]{BMloc1}, there is a triangle
$Z\fl X\fl Y \stackrel{\eps}{\fl}\shift Z$ in $\cat$, where
$X\in\cat(T)$, $Z\in T^\perp$ and $\eps\in(T^\perp)$.
Then we have $\cat(T,X)\simeq\cat(T,Y)\simeq M$, which can be
seen by applying the functor $\cat(T,-)$ to the triangle above.
Moreover, $X$ belongs to $\tbar^\perp$, since both $Z$ and $Y$ belong to $\tbar^{\perp}$.
\end{proof}

\begin{cor} \label{corollary: gentheoremB}
There is an equivalence of categories
$$(\modl)_{\sc_{\bc,0}} \simeq \ec / \add\cat(T,\shift R^\ast)$$
and, dually, an equivalence of categories
$$(\modl')_{\sc_{0,\bc'}} \simeq \ec' / \add D\cat(R,\shift T').$$
\end{cor}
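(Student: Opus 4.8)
The plan is to establish the two equivalences by composing the localisation equivalence of Theorem~\ref{theorem: fbar equivalence} with the module-theoretic description of $\ec$ coming from Lemma~\ref{lemma: ec}. The strategy is to show that the target of $\fbar$, namely the localisation $(\modl)_{\sc_{\bc,0}}$, coincides (up to equivalence) with the quotient $\ec/\add\cat(T,\shift R^\ast)$, and then invoke the first equivalence of Theorem~\ref{theorem: fbar equivalence} to identify the source $\cbart/(\shift T')$ with this localisation. Concretely, Theorem~\ref{theorem: fbar equivalence} already gives $\cbart/(\shift T')\simeq(\modl)_{\sc_{\bc,0}}$, so the entire content of the corollary is to match $\cbart/(\shift T')$ with $\ec/\add\cat(T,\shift R^\ast)$. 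But this latter identification is precisely what was proved in the course of Theorem~\ref{theorem: equivalence module version}: Lemma~\ref{lemma: ec} shows $\cat(T,-)$ induces an equivalence $\cbart/(\shift\tbar)\simeq\ec$, and since $\cbart/(\shift T')\cong\bigl(\cbart/(\shift\tbar)\bigr)/(\shift R^\ast)$ one obtains $\cbart/(\shift T')\simeq\ec/\add\cat(T,\shift R^\ast)$.

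First I would recall the chain of isomorphisms $\cbart/(\shift T')\cong\bigl(\cbart/(\shift\tbar)\bigr)/(\shift R^\ast)$; this holds because $\shift T'=\shift\tbar\oplus\shift R^\ast$, so factoring out the ideal $(\shift T')$ can be done in two stages, first killing morphisms through $\add\shift\tbar$ and then killing morphisms through $\add\shift R^\ast$. Next, applying Lemma~\ref{lemma: ec}, the functor $\cat(T,-)$ sends the quotient $\cbart/(\shift\tbar)$ equivalently onto $\ec$, carrying the object $\shift R^\ast$ to $\cat(T,\shift R^\ast)$; passing to the further quotient on both sides yields the equivalence
$$\cbart/(\shift T')\simeq\ec/\add\cat(T,\shift R^\ast).$$
Composing this with the equivalence $\fbar:\cbart/(\shift T')\gfl(\modl)_{\sc_{\bc,0}}$ of Theorem~\ref{theorem: fbar equivalence} gives the first claimed equivalence $(\modl)_{\sc_{\bc,0}}\simeq\ec/\add\cat(T,\shift R^\ast)$. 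The dual statement follows by the same argument applied in the opposite category, using Lemma~\ref{lemma: ecprime} in place of Lemma~\ref{lemma: ec} and the dual half of Theorem~\ref{theorem: fbar equivalence}.

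I expect the main subtlety to lie not in the composition of equivalences but in confirming that Lemma~\ref{lemma: Tbarintersection} is genuinely needed and correctly applied to match the essential images. The role of Lemma~\ref{lemma: Tbarintersection} is presumably to guarantee that every object of the subcategory $\bc$—through which the localising class $\sc_{\bc,0}$ is defined—can be represented by an object of $\cat(T)\cap\tbar^\perp$, so that the localisation $(\modl)_{\sc_{\bc,0}}$ really does invert exactly the morphisms whose images vanish in $\cbart/(\shift T')$ and no others. The delicate point will be checking that the objects killed in the localisation correspond precisely to $\add\cat(T,\shift R^\ast)$ under the equivalence of Lemma~\ref{lemma: ec}, rather than to some larger class; here one uses that $\shift R^\ast\in\tbar^\perp\cap\cbart=\add\shift T'$ by Lemma~\ref{lemma: compute perps}(b), so the image of $\shift R^\ast$ under $\cat(T,-)$ lands in $\bc$ and is exactly the simple-type object being factored out. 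Once this bookkeeping is in place, the corollary is a formal consequence of the three cited results, so the proof should be short.
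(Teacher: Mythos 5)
Your proposal is correct and is essentially the paper's own proof: the corollary is obtained by combining Theorem~\ref{theorem: fbar equivalence} with Lemma~\ref{lemma: ec} (respectively, Lemma~\ref{lemma: ecprime}), and the identification $\cbart/(\shift T')\simeq\ec/\add\cat(T,\shift R^\ast)$ via the two-stage quotient $\cbart/(\shift T')\cong\bigl(\cbart/(\shift\tbar)\bigr)/(\shift R^\ast)$ is exactly the argument already recorded in the proof of Theorem~\ref{theorem: equivalence module version}. Your closing worry about Lemma~\ref{lemma: Tbarintersection} is unfounded here: in the paper that lemma enters the proofs of Theorem~\ref{theoremB} and Lemma~\ref{lemma: B stable}, not this corollary, whose proof is precisely the formal composition of equivalences you describe.
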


\begin{proof}
For the first statement, combine Theorem~\ref{theorem: fbar equivalence} with Lemma~\ref{lemma: ec}, and for
the second statement, combine Theorem~\ref{theorem: fbar equivalence} with Lemma~\ref{lemma: ecprime}.
\end{proof}

\begin{proof}[Proof of Theorem~\ref{theoremB}]
We set $\cat$ to be an acyclic cluster category and
$T$ a rigid object in $\cat$. 
We consider the case $m=n+1$ and $R=T_m$ is indecomposable. As in the proofs of Corollaries~\ref{corollary: loop} and~\ref{corollary: no loop}, $\cat(T,\Sigma R^*)\simeq Q_m\simeq S_m$ in this case.
In particular, there are no loops in the quiver of $\End_{\cat}(T)$ at the vertex corresponding to $S_m$. 

By Lemma~\ref{lemma: Tbarintersection},
we have $\bc=\cat(T,\tbar^{\perp})=\cat(T,\cat(T)\cap \tbar^{\perp})$.
For $j=1,\ldots ,n+1$, let $P_j=\cat(T,T_j)$ be the
$j$th indecomposable projective in $\modl$.
Then an object $X$ in $\cat(T)$ lies in $\tbar^{\perp}$
if and only if $\homph_{\Lambda}(P_j,\cat(T,X))=0$
for $j=1,2,\ldots ,n$, which holds if and only if
$\cat(T,X)$ lies in $\add(S_m)$. It follows that
$\bc=\add(S_m)$ in this case, and hence $\sc_{\bc,0}$
coincides with the class $\morone$ of morphisms considered
in the introduction. We see that the first statement
in Theorem~\ref{theoremB}
follows from the first statement in
Corollary~\ref{corollary: gentheoremB}. The second statement in Theorem~\ref{theoremB} follows from the
second statement in Corollary~\ref{corollary: gentheoremB}.
\end{proof}

\begin{cor} \label{corollary: modllocalisation}
If the category $\cat$ admits a Serre functor, then
there is an equivalence of categories:
$$(\modl)_{\sc_{\bc,0}}\simeq (\modl')_{\sc_{0,\bc'}}.$$
\end{cor}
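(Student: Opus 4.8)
The plan is to obtain Corollary~\ref{corollary: modllocalisation} by transitivity, gluing together two equivalences we have already established with the central equivalence of Theorem~\ref{theorem: equivalence module version}. Concretely, I would first invoke Corollary~\ref{corollary: gentheoremB}, whose two statements give
$$(\modl)_{\sc_{\bc,0}} \simeq \ec / \add\cat(T,\shift R^\ast) \quad\text{and}\quad (\modl')_{\sc_{0,\bc'}} \simeq \ec' / \add D\cat(R,\shift T').$$
Since the hypothesis of the corollary is precisely that $\cat$ admits a Serre functor, Theorem~\ref{theorem: equivalence module version} applies and furnishes the middle equivalence
$$\ec / \add\cat(T,\shift R^\ast) \simeq \ec' / \add D\cat(R,\shift T').$$
Composing these three equivalences yields the desired equivalence $(\modl)_{\sc_{\bc,0}}\simeq (\modl')_{\sc_{0,\bc'}}$.

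\emph{Why the hypotheses line up.} The only nontrivial point is to check that the Serre-functor hypothesis of Corollary~\ref{corollary: modllocalisation} is exactly what is needed to activate Theorem~\ref{theorem: equivalence module version}, and that the quotients appearing on the two sides of Corollary~\ref{corollary: gentheoremB} match verbatim the source and target of the equivalence in Theorem~\ref{theorem: equivalence module version}. Both statements are phrased in terms of the same subcategories $\ec$, $\ec'$ and the same additive subcategories $\add\cat(T,\shift R^\ast)$ and $\add D\cat(R,\shift T')$, so no reindexing or identification is required. Note that Corollary~\ref{corollary: gentheoremB} itself does not require a Serre functor (it rests on Theorem~\ref{theorem: fbar equivalence} and Lemmas~\ref{lemma: ec} and~\ref{lemma: ecprime}); the Serre functor enters the argument solely through Theorem~\ref{theorem: equivalence module version}.

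\emph{Main obstacle.} There is essentially no obstacle here, as the result is a formal consequence of three previously established equivalences; the proof is a one-line transitivity argument. The only thing to be careful about is to confirm that Theorem~\ref{theorem: equivalence module version} genuinely relates the \emph{same} pair of quotient categories that occur as the right-hand sides in Corollary~\ref{corollary: gentheoremB}, rather than merely isomorphic-looking ones. Since all three results are stated with identical notation for $\ec$, $\ec'$ and the subcategories being killed, this verification is immediate, and the proof reduces to concatenating the equivalences.
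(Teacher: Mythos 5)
Your proof is correct and is essentially the paper's own argument: the paper obtains the corollary by combining Theorem~\ref{theorem: fbar equivalence} with Theorem~\ref{theorem: equivalence of categories}, i.e.\ by passing through the subfactor categories $\cbart/(\shift T')\simeq\cbart/(T)$, whereas you pass through their module-theoretic images $\ec/\add\cat(T,\shift R^\ast)$ and $\ec'/\add D\cat(R,\shift T')$ via Corollary~\ref{corollary: gentheoremB} and Theorem~\ref{theorem: equivalence module version}. Since those two results are themselves deduced from Theorem~\ref{theorem: fbar equivalence} and Theorem~\ref{theorem: equivalence of categories} by applying Lemmas~\ref{lemma: ec} and~\ref{lemma: ecprime}, your chain of equivalences is the paper's chain with a harmless extra detour through $\operatorname{mod}\Lambda$ and $\operatorname{mod}\Lambda'$, and the Serre functor hypothesis enters in exactly the same place.
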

\begin{proof}
Combine Theorem~\ref{theorem: fbar equivalence}
with Theorem~\ref{theorem: equivalence of categories}.
\end{proof}

\begin{proof}[Proof of Theorem~\ref{theoremA}]
Since an acyclic cluster category has a Serre functor,
Theorem~\ref{theoremA} follows from Corollary~\ref{corollary: modllocalisation} and the observations
in the proof of Theorem~\ref{theoremB} above.
\end{proof}

We shall also use Theorem~\ref{theorem: fbar equivalence} to show that the categories $\cat_{\sc}$ and $\cat_{\widetilde{\sc}}$ are isomorphic
(Theorem~\ref{theorem: more localisations}).
We also remark that Lemma~\ref{lemma: Z in CT}
may be of independent interest.

\subsection{Proof of Theorem~\ref{theorem: fbar equivalence}}
We show the first statement of the Theorem. The second statement follows from a dual argument.
In order to prove that $\overline{F}$ is full and dense, it is enough to prove that $F$
is full and dense. The functor $F$ is easily seen to be dense (Proposition~\ref{proposition: F full}).
Showing that it is full requires a bit more work (Lemmas~\ref{lemma: Z in CT} and~\ref{lemma: B iff XT}),
and in order to do so we describe, in Lemma~\ref{lemma: Stilde and SB0},
the category $(\modl)_{\sc_{B,0}}$ as a localisation of $\cat$.
We then show that the functor $\homph_\Lambda(U,-)$ induces a functor $(\modl)_{\sc_{\bc,0}}\gfl\modlbar$
(Lemma~\ref{lemma: modl loc to modlbar}).
Composing $F$ with this induced functor and
applying results from~\cite{BMloc1} then gives us the
faithfulness of $\overline{F}$ (Proposition~\ref{proposition: faithful}).

\begin{lemma}\label{lemma: B stable}
The full subcategory $\bc$ of $\modl$ is closed
under taking images and submodules.
\end{lemma}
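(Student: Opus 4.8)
The plan is to give an intrinsic description of $\bc$ as the full subcategory of $\modl$ consisting of those modules $M$ with $\homph_\Lambda(P_j,M)=0$ for all $j\le n$, where $P_j=\cat(T,T_j)$ is the indecomposable projective attached to the summand $T_j$ of $\tbar$. Once this description is available, the statement is immediate: each functor $\homph_\Lambda(P_j,-)$ is exact (as $P_j$ is projective) and identifies with the coordinate functor $M\mapsto e_jM$ for the idempotent $e_j$ corresponding to $T_j$, so the condition $e_jM=0$ passes to any submodule of $M$ and to any quotient of $M$. Since the image of a morphism $M\fl M'$ with $M\in\bc$ is a quotient of $M$, both closure properties follow at once; in other words, $\bc$ will be exhibited as a Serre subcategory of $\modl$.

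To establish the description I would argue as follows. For $X\in\cat(T)$, rigidity of $T$ gives $\cat(T_j,\shift T)=0$, so no nonzero morphism $T_j\fl X$ factors through $\add\shift T$; the equivalence $\cat(T)/(\shift T)\simeq\modl$ of~\cite{KR1,BMloc1} therefore yields a natural isomorphism $\homph_\Lambda(P_j,\cat(T,X))\cong\cat(T_j,X)$. Consequently, $X\in\tbar^\perp$ if and only if $\homph_\Lambda(P_j,\cat(T,X))=0$ for every $j\le n$. Now let $M\in\bc$. By Lemma~\ref{lemma: Tbarintersection} there is $X\in\cat(T)\cap\tbar^\perp$ with $\cat(T,X)\cong M$, whence $\homph_\Lambda(P_j,M)=0$ for $j\le n$. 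Conversely, if $M$ satisfies this vanishing, the density of $\cat(T,-)$ on $\cat(T)$ produces $X\in\cat(T)$ with $\cat(T,X)\cong M$; the displayed isomorphism then forces $X\in\tbar^\perp$, so $M\in\bc$. This shows the two descriptions coincide.

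The only genuine work is the Hom-identification $\homph_\Lambda(P_j,\cat(T,X))\cong\cat(T_j,X)$ and, in particular, the converse half of the characterisation, which is where Lemma~\ref{lemma: Tbarintersection} and the density of $\cat(T,-)$ on $\cat(T)$ are essential; everything after that is formal Serre-subcategory bookkeeping. I expect no real obstacle beyond keeping track of which direction of the characterisation each closure property uses: closure under submodules needs both directions (the forward one to see $e_jM=0$, the converse one to recognise the submodule as lying in $\bc$), whereas closure under images then follows since images are quotients of objects of $\bc$.
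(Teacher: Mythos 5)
Your proof is correct, and it takes a genuinely different route from the paper's. The paper argues inside the triangulated category: it writes the given morphism of $\modl$ as $\cat(T,f)$ for some $f\colon X\fl Y$ with $X,Y\in\cat(T)$, uses Lemma~\ref{lemma: Tbarintersection} to choose $X$ (in the epimorphism case) or $Y$ (in the monomorphism case) inside $\tbar^\perp$, completes $f$ to a triangle, quotes \cite[Lemma 2.5]{BMloc1} to see that the relevant map of that triangle factors through $T^\perp$, and then applies $\cat(\tbar,-)$ to the triangle to conclude. You instead work entirely in $\modl$, identifying $\bc$ with $\ens{M : \homph_\Lambda(P_j,M)=0 \text{ for all } j\le n}$, i.e.\ with the modules killed by the idempotent $e=\sum_{j\le n}e_j$, after which the closure properties are formal. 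Both of your key steps are sound: the identification $\homph_\Lambda(P_j,\cat(T,X))\cong\cat(T_j,X)$ for $X\in\cat(T)$ does follow from rigidity together with full faithfulness of the induced functor on $\cat(T)/(\shift T)$ (in fact it holds for every $X\in\cat$, since $\homph_\Lambda(\Lambda e_j,M)\cong e_jM$ and $e_j\cat(T,X)\cong\cat(T_j,X)$, so the appeal to the equivalence could even be avoided), and there is no circularity in invoking Lemma~\ref{lemma: Tbarintersection}, which precedes this lemma and is independent of it --- the paper's own proof uses it in exactly the same way. Your route buys a formally stronger and reusable statement: $\bc$ is a Serre subcategory, namely the module category of the quotient algebra $\Lambda/\Lambda e\Lambda$, hence closed under extensions and all quotients as well; it is also precisely the description the paper itself rediscovers in the proof of Theorem~\ref{theoremB}, where $\bc=\add(S_m)$. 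What the paper's argument buys is uniformity: it stays within the triangle calculus and the lemmas from \cite{BMloc1} used throughout the section, with no idempotent bookkeeping. One cosmetic quibble: your closing remark that closure under images comes more cheaply than closure under submodules is not quite right, since both properties use the two directions of your characterisation (the forward one to obtain the annihilation condition, the converse one to land back in $\bc$), but this does not affect correctness.
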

\begin{proof}
Let $M\stackrel{u}{\fl}N$ be a morphism in $\modl$.
Then there are objects $X,Y\in \cat(T)$ such that
$\cat(T,X)\simeq M$ and $\cat(T,Y)\simeq N$, and a morphism $f:X\rightarrow Y$ such that
$\cat(T,f)\simeq u$.
We complete $f$ to a triangle
$$Z\stackrel{g}{\fl} X \stackrel{f}{\fl} Y \stackrel{h}{\fl} \shift Z.$$

If $M$ lies in $\bc$ and $u$ is an epimorphism then,
by Lemma~\ref{lemma: Tbarintersection}, we may take
$X$ in $\tbar^{\perp}$ and, by~\cite[Lemma 2.5]{BMloc1},
$h$ factors through $T^\perp$.
If $N$ lies in $\bc$ and $u$ is a monomorphism then,
by Lemma~\ref{lemma: Tbarintersection}, we may take
$Y$ in $\tbar^{\perp}$ and, by~\cite[Lemma 2.5]{BMloc1},
$f$ factors through $T^\perp$.

In either case, the result follows from applying the functor $\cat(\tbar,-)$ to this triangle.
\end{proof}

\begin{prop}\label{proposition: F dense}
The functor $F$ is dense.
\end{prop}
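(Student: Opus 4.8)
The goal is to show that the functor $F$, which is defined as the composition $\cbart/(\shift\tbar) \hookrightarrow \cat(T)/(\shift T) \xrightarrow{\sim} \modl \xrightarrow{L_{\sc_{\bc,0}}} (\modl)_{\sc_{\bc,0}}$, is dense. Since the middle equivalence $\cat(T)/(\shift T) \simeq \modl$ is already an equivalence (hence dense), and the localisation functor $L_{\sc_{\bc,0}}$ is always dense by construction, the real content is to show that every object of $(\modl)_{\sc_{\bc,0}}$ is isomorphic, \emph{in the localisation}, to an object coming from the subcategory $\cbart/(\shift\tbar)$. Equivalently, after passing through the equivalence $\cat(T)/(\shift T) \simeq \modl$, every object $\cat(T,X)$ with $X \in \cat(T)$ must become isomorphic in $(\modl)_{\sc_{\bc,0}}$ to some $\cat(T,X')$ with $X' \in \cbart$.

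The plan is as follows. Start with an arbitrary $M \in \modl$ and write $M \simeq \cat(T,X)$ for some $X \in \cat(T)$, which is possible by the density of $\cat(T,-)|_{\cat(T)}$. Recall $\cat(T) = T \ast \shift T$ and $\cbart = T \ast \shift \tbar$, so that $\cbart$ is obtained from $\cat(T)$ by restricting the ``$\shift$-part'' from $\add \shift T$ down to $\add \shift \tbar$; the difference is controlled by the summand $\shift R$. First I would take a triangle presentation $T_1 \fl T_0 \fl X \fl \shift T_1$ with $T_0, T_1 \in \add T$ exhibiting $X \in \cat(T)$, and split off the part of $T_1$ lying in $\add R$: write $T_1 = \tbar_1 \oplus R_1$ with $\tbar_1 \in \add\tbar$ and $R_1 \in \add R$. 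The obstruction to $X$ lying in $\cbart$ is precisely this $R_1$-contribution. The idea is that one can ``absorb'' or ``correct'' the $R_1$-part at the cost of a morphism in $\sc_{\bc,0}$, which becomes invertible in the localisation.

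Concretely, I would use the exchange triangle $R^\ast \fl B \fl R \fl \shift R^\ast$ (with $B \in \add\tbar$) to replace occurrences of $R$ by objects built from $\tbar$ and $R^\ast$. Applying the octahedral axiom to combine the defining triangle of $X$ with the exchange triangle, one produces a new object $X'$ sitting in a triangle that certifies $X' \in T \ast \shift \tbar = \cbart$, together with a comparison morphism between $X$ and $X'$ whose image under $\cat(T,-)$ is an epimorphism with kernel landing in $\bc$ (the image of $\tbar^\perp$); note $\cat(T,\shift R^\ast) \in \bc$, which is exactly why these correction terms vanish in the localisation. Thus $F(X') \simeq M$ in $(\modl)_{\sc_{\bc,0}}$, giving density.

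The main obstacle I anticipate is the bookkeeping in the octahedral step: one must verify precisely that the comparison morphism produced is genuinely an epimorphism of $\Lambda$-modules with kernel in $\bc$ (so that it lies in $\sc_{\bc,0}$ and becomes invertible after localising), rather than merely a morphism that becomes invertible for some vaguer reason. This requires applying $\cat(T,-)$ to the constructed triangles and reading off the long exact sequences, using that $\cat(T,\shift\tbar) = 0$ (rigidity of $T$ relative to the $\shift\tbar$-part) and that the kernel term is the image of an object of $\tbar^\perp$, hence in $\bc$ by definition. Once the relevant triangle is in place, the module-theoretic identification of the kernel is the delicate point; the density conclusion then follows formally from the fact that morphisms in $\sc_{\bc,0}$ are inverted by $L_{\sc_{\bc,0}}$.
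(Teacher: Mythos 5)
Your proposal is correct, and it reaches the paper's conclusion by a genuinely different construction. The paper's own proof is very short because it reuses machinery already in place: for \emph{any} $X\in\cat$, Lemma~\ref{lemma: cbart contravariantly finite} provides a triangle $Z \fl R_0X \stackrel{\eta_X}{\gfl} X \stackrel{g}{\gfl} \shift Z$ with $R_0X\in\cbart$, $Z\in\tbar^\perp$ and $g\in(T^\perp)$; applying $\cat(T,-)$ shows that $\cat(T,\eta_X)$ is an epimorphism whose kernel is an image of $\cat(T,Z)\in\bc$, hence $\cat(T,\eta_X)\in\sc_{\bc,0}$ and $M\simeq FR_0X$ in the localisation. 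You instead build the approximating object by hand, exploiting that $X$ lies in $\cat(T)$: write $T_1\cong\tbar_1\oplus R_1$ in a presentation triangle $T_1\stackrel{\al}{\gfl} T_0\gfl X\gfl\shift T_1$, take the exchange triangle $R_1^\ast\gfl B_1\stackrel{b}{\gfl} R_1\gfl\shift R_1^\ast$ with $B_1\in\add\tbar$ (a direct sum of exchange triangles of indecomposable summands, as in Remark~\ref{remark: exchange triangles}), and apply the octahedral axiom to the composition $\tbar_1\oplus B_1 \stackrel{1\oplus b}{\gfl} T_1 \stackrel{\al}{\gfl} T_0$. Since the cone of $1\oplus b$ is $\shift R_1^\ast$, this yields $X' = \operatorname{cone}\big(\al\circ(1\oplus b)\big)\in T\ast\shift\tbar=\cbart$ and a triangle $\shift R_1^\ast \gfl X' \stackrel{q}{\gfl} X \stackrel{r}{\gfl} \shift^2 R_1^\ast$ in which $r$ factors through $\shift T_1$, hence through $T^\perp$ by rigidity of $T$; so $\cat(T,q)$ is surjective with kernel a quotient of $\cat(T,\shift R_1^\ast)$, and $\shift R_1^\ast\in\tbar^\perp$ gives $\cat(T,\shift R_1^\ast)\in\bc$. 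This is exactly the bookkeeping you anticipated, and it has the merit of exhibiting the error term explicitly as $\shift R^\ast$, the summand created by the mutation; what the paper's route buys is brevity and reusability, since $R_0X$ is defined on all of $\cat$ (not only on $\cat(T)$) and reappears in Proposition~\ref{proposition: right adjoint} and Theorem~\ref{theorem: more localisations}. One point you should tighten: the kernel of $\cat(T,q)$ is not in $\bc$ ``by definition'' --- it is a homomorphic image of $\cat(T,\shift R_1^\ast)$, not a priori of the form $\cat(T,W)$ with $W\in\tbar^\perp$ --- so you must invoke the closure of $\bc$ under taking images, which is the paper's Lemma~\ref{lemma: B stable}, cited at precisely this step in the paper's own proof.
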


\begin{proof}
For any module $M\in\modl$, let $X$ be an object
in $\cat(T)$ such that $\cat(T,X)\simeq M$.
In Lemma~\ref{lemma: cbart contravariantly finite},
we constructed a triangle
$Z \gfl R_0X\stackrel{\eta_X}{\gfl}X\stackrel{g}{\gfl}\shift Z$,
with $R_0X\in\cbart$, $Z\in\tbar^\perp$, and $g\in(T^\perp)$.
We claim that the morphism
$\eta_X$ is inverted in $(\modl)_{\sc_{\bc,0}}$.
There is an exact sequence in $\modl$:
$$\cat(T,Z) \fl \cat(T,R_0X) \fl \cat(T,X) \stackrel{0}{\fl}\cat(T,\shift Z).$$
Therefore, the morphism $\cat(T,\eta_X)$ is surjective and $\cat(T,Z)$ surjects onto its kernel.
Since $\bc$ is closed under images (Lemma~\ref{lemma: B stable}), we may conclude that $\cat(T,\eta_X)$ belongs to $\sc_{\bc,0}$, and the claim is shown.
This shows that $M\simeq FR_0X$ in
$(\modl)_{\sc_{\bc,0}}$ and we are done.
\end{proof}

\begin{lemma}\label{lemma: Z in CT}
Let $Z\stackrel{u}{\gfl} X\stackrel{v}{\gfl} Y \stackrel{\eps}{\gfl} \shift Z$ be a triangle in $\cat$
with $X,Y\in\cat(T)$ and $\eps \in (T^\perp)$. Then $Z$ belongs to $\cat(T)$.
\end{lemma}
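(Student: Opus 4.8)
The plan is to build $Z$ out of objects of $\add T$ by a single application of the octahedral axiom, using the hypothesis $\eps\in(T^\perp)$ at exactly one point. Rotating the given triangle backwards exhibits $Z$ as an extension of $X$ by $\susm Y$, so a naive computation (using associativity of $\ast$ together with the rigidity of $T$, whence $\add T\ast\add T=\add T$ and $\shift T\ast\shift T=\add\shift T$) only yields $Z\in\susm T\ast\cat(T)$. The entire content of the lemma is therefore to remove the leading $\susm T$, and this is precisely where the assumption $\eps\in(T^\perp)$ must be spent.

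First I would fix a presentation of $Y$: since $Y\in\cat(T)=T\ast\shift T$, choose a triangle $T_1^Y\fl T_0^Y\stackrel{p_Y}{\fl}Y\fl\shift T_1^Y$ with $T_0^Y,T_1^Y\in\add T$, so that the cone of $p_Y$ is $\shift T_1^Y\in\add\shift T$. The key observation is that $\eps\, p_Y=0$: writing $\eps=b a$ with $a\colon Y\fl W$, $b\colon W\fl\shift Z$ and $W\in T^\perp$, the morphism $a\,p_Y\colon T_0^Y\fl W$ lies in $\cat(T_0^Y,W)=0$ because $T_0^Y\in\add T$ and $W\in T^\perp$; hence $\eps\,p_Y=0$. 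Applying this to the rotated triangle $X\stackrel{v}{\fl}Y\stackrel{\eps}{\fl}\shift Z\fl\shift X$, the vanishing $\eps\,p_Y=0$ lets me lift $p_Y$ through $v$: there is a morphism $q\colon T_0^Y\fl X$ with $v q=p_Y$.

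Next I would apply the octahedral axiom to the composition $T_0^Y\stackrel{q}{\fl}X\stackrel{v}{\fl}Y$. The three relevant cones are the cone $C_q$ of $q$, the cone of $v$ (which is $\shift Z$, read off from the original triangle), and the cone of $v q=p_Y$ (which is $\shift T_1^Y$). The octahedron produces a triangle $C_q\fl\shift T_1^Y\fl\shift Z\fl\shift C_q$. Desuspending and rotating gives a triangle $T_1^Y\fl Z\fl C_q\fl\shift T_1^Y$, so that $Z\in\add T\ast\add C_q$. It remains to check $C_q\in\cat(T)$: the defining triangle $T_0^Y\fl X\fl C_q\fl\shift T_0^Y$ shows $C_q\in\add X\ast\add\shift T_0^Y\subseteq(T\ast\shift T)\ast\shift T=T\ast(\shift T\ast\shift T)=T\ast\shift T=\cat(T)$, using associativity of $\ast$ and $\shift T\ast\shift T=\add\shift T$. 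Finally $Z\in\add T\ast\cat(T)=(\add T\ast\add T)\ast\shift T=\add T\ast\shift T=\cat(T)$, which is the desired conclusion.

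The main obstacle is conceptual rather than computational: recognising that $\eps\in(T^\perp)$ should be exploited by lifting the approximation $p_Y$ coming from the presentation of $Y$ through $v$, as this is exactly what trims the spurious $\susm T$ factor. Once the lift $q$ is in hand, the remainder is a routine octahedron plus the two bookkeeping identities $\add T\ast\add T=\add T$ and $\shift T\ast\shift T=\add\shift T$ supplied by the rigidity of $T$.
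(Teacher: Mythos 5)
Your proof is correct, and it takes a genuinely different route from the paper's. Both arguments spend the hypothesis $\eps\in(T^\perp)$ in exactly the same way: a presentation triangle $T_1^Y\fl T_0^Y\stackrel{p_Y}{\fl}Y\fl\shift T_1^Y$ is chosen and $p_Y$ is lifted through $v$ (in the paper this is the morphism $b$ with $a=vb$). After that the two proofs diverge. The paper takes a minimal right $\add T$-approximation $T^Z\fl Z$, whose co-cone $U$ satisfies $\shift U\in T^\perp$, forms a morphism of triangles from the split triangle $T^Z\fl T^Z\oplus T_0^Y\fl T_0^Y$ to $Z\fl X\fl Y$, completes it to a nine diagram, observes that the top row splits (since $\cat(T_1^Y,\shift U)=0$), and is thereby reduced to showing that $[uf\;b]$ is a right $\add T$-approximation of $X$ — proved by an explicit diagram chase; this yields $U\in\add T$ and hence $Z\in\cat(T)$. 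You instead apply a single octahedron to the composite $vq=p_Y$, obtaining the triangle $T_1^Y\fl Z\fl C_q\fl\shift T_1^Y$, and finish with the $\ast$-calculus that the paper sets up in Section~\ref{s:setup}: $C_q\in\cat(T)\ast\add\shift T=\cat(T)$ and then $Z\in\add T\ast\cat(T)=\cat(T)$, both equalities coming from associativity of $\ast$ and rigidity of $T$. Your route is shorter and more formal: it needs no approximation of $Z$, no minimality, no nine lemma, and no appeal to the Krull--Schmidt property, whereas the paper's argument has the side benefit of producing explicit approximation data (the co-cone of a minimal right $\add T$-approximation of $Z$ lies in $\add T$), in the style of the constructions used elsewhere in the paper. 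One cosmetic point: your inclusion $\add X\ast\add\shift T_0^Y\subseteq(T\ast\shift T)\ast\shift T$ implicitly assumes $\add X\subseteq\cat(T)$, i.e.\ closure of $\cat(T)$ under direct summands, which you have not justified; this is harmless, because the specific triangle $X\fl C_q\fl\shift T_0^Y\fl\shift X$ already has its outer terms in $\cat(T)$ and in $\add\shift T$ respectively, so $C_q\in\cat(T)\ast\add\shift T$ follows directly from the definition of $\ast$ without any closure assumption.
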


\begin{proof}
 Let $T^Z \stackrel{f}{\gfl}Z$ be a minimal right $\add T$-approximation.
Complete it to a triangle $U\fl T^Z \stackrel{f}{\fl} Z \stackrel{\delta}{\fl} \shift U$.
We note that, since $f$ is an approximation and $T$ is rigid, we have $\shift U\in T^\perp$,
as can be seen by applying the functor $\cat(T,-)$ to the triangle above, in a manner similar to
that of~\cite[Lemma 6.3]{BMRRT}
(a more general version of this assertion can be found in~\cite[Lemma 2.1]{Jorgensen-Wakamatsu}).

Since $Y$ belongs to $\cat(T)$, there is a triangle
$T_1^Y\fl T_0^Y \stackrel{a}{\fl}Y \stackrel{\eta}{\fl}\shift T_1^Y$.
By assumption, the composition $\eps a$ vanishes, so that there is
a morphism $T_0^Y\stackrel{b}{\fl}X$ such that $a = vb$.
We thus have a morphism of triangles
$$
\xymatrix{
T^Z \ar[r]^{\left[^0_1\right]} \ar[d]^f
& T^Z\oplus T_0^Y \ar[r]^{[0\,1]} \ar[d]^{[uf\,b]}
& T_0^Y \ar[r]^0 \ar[d]^a
& \shift T^Z \ar[d]^{\shift f} \\
Z \ar[r]^u
& X \ar[r]^v
& Y \ar[r]^\eps
& \shift Z
}
$$
that we complete to a nine diagram
$$
\xymatrix{
U \ar[d] \ar[r]
& V \ar[d] \ar[r]
& T_1^Y \ar[d] \ar[r]
& \shift U \ar[d] \\
T^Z \ar[r]^{\left[^0_1\right]} \ar[d]^f
& T^Z\oplus T_0^Y \ar[r]^{[0\,1]} \ar[d]^{[uf\,b]}
& T_0^Y \ar[r]^0 \ar[d]^a
& \shift T^Z \ar[d]^{\shift f} \\
Z \ar[r]^u \ar[d]^{\delta}
& X \ar[r]^v \ar[d]
& Y \ar[r]^\eps \ar[d]^{\eta}
& \shift Z \\
\shift U \ar[r]
& \shift V \ar[r]
& \shift T_1^Y
&
}
$$
Since $\shift U \in T^\perp$, the morphism
$T_1^Y\fl\shift U$ vanishes and the top triangle
splits. Thus $U$ is a summand of $V$ and it is enough to prove
that $V$ belongs to $\add T$. Since $X\in\cat(T)$, this amounts
to proving that the morphism $T^Z\oplus T_0^Y \fl X$
is an $\add T$-approximation. Let us thus prove the latter statement.
Let $W \stackrel{g}{\gfl}X$ be a morphism in $\cat$ with
$W\in\add T$ (the morphisms are illustrated in
the diagram below). Then the composition
$\eta v g$ is zero so that there is a morphism $W \stackrel{c}{\gfl}T_0^Y$
with $vg = ac$. This implies $vg = vbc$, and there is a morphism
$W\stackrel{d}{\gfl}Z$ such that $g-bc = ud$. Now $f$
is an $\add T$-approximation so that there is a morphism $W\stackrel{e}{\gfl}T^Z$
satisfying $d = fe$. The last two equalities give
$g = ufe + bc = [uf\;b]\left[^e_c\right]$
and we have shown that $[uf\;b]$ is an $\add T$-approximation.
\end{proof}

The following diagram shows the morphisms
$g,b,c,d$ and $e$.
$$
\xymatrix{
&&& W \ar@/^2pc/[ddll]^g
\ar@/_/@{-->}[dl]_c \ar@/_1.5pc/@{-->}[ddlll]_d
\ar@/_2pc/@{..>}[dlll]_e \\
T^Z \ar[r]^{\left[^0_1\right]} \ar[d]_f
& T^Z\oplus T_0^Y \ar[r]^{[0\,1]} \ar[d]_{[uf\,b]}
& T_0^Y \ar[r]^0 \ar@{-->}[dl]_b \ar[d]_a
& \shift T^Z \ar[d]^{\shift f} \\
Z \ar[r]_u
& X \ar[r]_v
& Y \ar[r]_\eps \ar[d]^\eta
& \shift Z \\
&& \shift T_1^Y &
}
$$

\begin{lemma}\label{lemma: B iff XT}
 Let $Z\stackrel{f}{\gfl}X$ be a morphism in $\cat$ with $Z\in\cat(T)$.
Then $\cat(T,f)$ factors through $\bc$ if and only if $f$ factors
through $\tbar^\perp$.
\end{lemma}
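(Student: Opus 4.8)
The plan is to prove the two implications separately, the direction ``$f$ factors through $\tbar^\perp$ $\Rightarrow$ $\cat(T,f)$ factors through $\bc$'' being the easy one. If $f=h\circ e$ with $e\colon Z\to W$, $h\colon W\to X$ and $W\in\tbar^\perp$, then applying $\cat(T,-)$ gives $\cat(T,f)=\cat(T,h)\circ\cat(T,e)$, which factors through $\cat(T,W)$; and $\cat(T,W)\in\bc$ by the very definition of $\bc$ as the essential image of $\tbar^\perp$ under $\cat(T,-)$. So this direction is immediate.

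For the converse, I would first record a reformulation of the property ``$f$ factors through $\tbar^\perp$'' that removes all reference to modules. Let $\tbar_0\stackrel{\alpha}{\gfl}Z$ be a right $\add\tbar$-approximation of $Z$, completed to a triangle $\tbar_0\stackrel{\alpha}{\gfl}Z\gfl Z'\gfl\shift\tbar_0$; applying $\cat(\tbar,-)$ shows $Z'\in\tbar^\perp$, exactly as in~\cite{BMRRT}. I claim that $f$ factors through $\tbar^\perp$ if and only if $f\alpha=0$. Indeed, if $f=he$ factors through some $W\in\tbar^\perp$, then $e\alpha\in\cat(\tbar_0,W)=0$ (as $\tbar_0\in\add\tbar$), whence $f\alpha=0$; conversely, if $f\alpha=0$ then $f$ factors through the cofiber map $Z\gfl Z'$ of $\alpha$, hence through $Z'\in\tbar^\perp$. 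Since $\tbar_0\in\add T$, the functor $\cat(T,-)$ is faithful on morphisms out of $\tbar_0$ (the canonical map $\cat(\tbar_0,X)\to\homph_\Lambda(\cat(T,\tbar_0),\cat(T,X))$ is bijective because $\cat(T,\tbar_0)$ is projective), so this last condition is in turn equivalent to $\cat(T,f)\circ\cat(T,\alpha)=0$.

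It then remains to show that $\cat(T,f)\circ\cat(T,\alpha)=0$ under the hypothesis that $\cat(T,f)$ factors through $\bc$. Write $\cat(T,f)=q\circ p$ with $p\colon\cat(T,Z)\to M$, $q\colon M\to\cat(T,X)$ and $M=\cat(T,Y)\in\bc$ for some $Y\in\tbar^\perp$. The point is that $M$ is annihilated by the idempotent $e_{\tbar}$ of $\Lambda$ corresponding to $\tbar$, since $e_{\tbar}M=\cat(\tbar,Y)=0$. Now $\cat(T,\tbar_0)$ is a projective $\Lambda$-module which is a direct sum of indecomposable projectives $P_j=\cat(T,T_j)$ with $j\leq n$, and for each such $j$ one has $\homph_\Lambda(P_j,M)\cong e_jM\subseteq e_{\tbar}M=0$. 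Hence $\homph_\Lambda(\cat(T,\tbar_0),M)=0$, so the composite $p\circ\cat(T,\alpha)\colon\cat(T,\tbar_0)\to M$ vanishes, and therefore $\cat(T,f)\circ\cat(T,\alpha)=q\circ\bigl(p\circ\cat(T,\alpha)\bigr)=0$, as required.

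The main obstacle is the converse direction, and specifically finding the right bridge between the module statement and a statement purely inside $\cat$; once the reformulation ``$f$ factors through $\tbar^\perp$ iff $f\alpha=0$'' is in place, the module computation is very short. One should take some care that $X$ is allowed to be an arbitrary object of $\cat$ (only $Z\in\cat(T)$ is assumed), but the argument above never uses anything beyond $Z\in\cat(T)$ and the fully faithfulness of $\cat(T,-)$ on $\add T$, so no extra hypothesis on $X$ is needed. Note also that this approach does not invoke Lemma~\ref{lemma: Tbarintersection}, since $M\in\bc$ already supplies a $Y\in\tbar^\perp$ with $e_{\tbar}M=\cat(\tbar,Y)=0$ directly.
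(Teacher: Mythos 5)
Your proof is correct, but it takes a genuinely different route from the paper's. The paper proves the nontrivial direction by lifting the module-level factorisation back into $\cat$: it invokes~\cite[Lemma 3.3]{BMloc1} to replace the object $U\in\tbar^\perp$ and the target $X$ by objects $U',X'$ lying in $\cat(T)$, lifts the two module maps to morphisms in $\cat$ using the fullness of $\cat(T,-)$ restricted to $\cat(T)$, and then handles the discrepancy between $f$ and the lifted composite --- a morphism factoring through $T^\perp$ --- by means of a presentation triangle $T_1\fl T_0\fl Z\fl\shift T_1$: the discrepancy vanishes on $T_0$, hence factors through $\shift T_1$, which lies in $\tbar^\perp$ by rigidity, so that $f$ factors through $U'\oplus\shift T_1\in\tbar^\perp$. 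This is exactly where the hypothesis $Z\in\cat(T)$ enters. You never lift the factorisation at all: you reduce ``$f$ factors through $\tbar^\perp$'' to the single vanishing $f\al=0$ against a right $\add\tbar$-approximation $\al\colon\tbar_0\fl Z$, and verify that vanishing purely on the module side, via projectivisation (the canonical bijection $\cat(\tbar_0,X)\cong\homph_\Lambda(\cat(T,\tbar_0),\cat(T,X))$ for $\tbar_0\in\add T$, of which only injectivity is needed) together with $\homph_\Lambda(\cat(T,\tbar_0),M)=0$ for $M\in\bc$. Your route buys two things: it is more self-contained (no appeal to~\cite[Lemma 3.3]{BMloc1}, nor to the fullness of $\cat(T,-)$ on $\cat(T)$), and it is strictly more general, since --- as you observe --- the hypothesis $Z\in\cat(T)$ is never used, so your argument proves the lemma for an arbitrary $Z\in\cat$; in particular it would allow one to bypass the appeal to Lemma~\ref{lemma: Z in CT} in the proof of Lemma~\ref{lemma: Stilde and SB0}. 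Two minor polish points: the idempotent computation can be shortcut, since projectivisation applied with $Y$ in place of $X$ gives $\homph_\Lambda(\cat(T,\tbar_0),\cat(T,Y))\cong\cat(\tbar_0,Y)=0$ directly; and the bijectivity you assert is best justified not by projectivity of $\cat(T,\tbar_0)$ per se but by reduction to the Yoneda isomorphism $\homph_\Lambda(\Lambda,N)\cong N$ via additivity, writing $\tbar_0$ as a direct summand of some $T^r$.
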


\begin{proof}
If $f$ belongs to the ideal $(\tbar^\perp)$, then $\cat(T,f)$
factors through $\bc$ by the definition of $\bc$. Let us prove the converse. Since $Z\in\cat(T)$,
there is a triangle $T_1\fl T_0\stackrel{g}{\fl} Z\fl \shift T_1$ in $\cat$
with $T_0,T_1\in\add T$. 
Assume that $\cat(T,f)$ belongs to $(\bc)$. Then there exists
$U\in\tbar^\perp$,
and there exist maps $\cat(T,Z)\stackrel{b}{\gfl}\cat(T,U)$ and $\cat(T,U)\stackrel{a}{\gfl}\cat(T,X)$ such that
$\cat(T,f) = a\circ b$.

We would like to lift $a$ and $b$ to morphisms in the category $\cat$. This cannot be done
in general, since the functor $\cat(T,-)$ is not full. Fortunately, it is full when restricted to $\cat(T)$.
We thus use~\cite[Lemma 3.3]{BMloc1} in order to replace the object $U$ by an object $U'$
whose image under $\cat(T,-)$ is isomorphic to that of $U$, but with the additional property that
$U'$ is in $\cat(T)$. Let us therefore apply~\cite[Lemma 3.3]{BMloc1} so as to get triangles
$Y_U \fl U'\fl U \stackrel{\eps}{\fl} \shift Y_U$ and $Y_X \fl X'\stackrel{u}{\fl} X \stackrel{\eta}{\fl} \shift Y_X$ in $\cat$,
where $U',X'$ belong to $\cat(T)$,
where $Y_U,Y_X$ belong to $T^\perp$, and where the morphisms
$\eps$ and $\eta$ factor through $T^\perp$. Since $U$ is in $\tbar^\perp$
and $Y_U$ in $T^\perp$, $U'$ is in $\tbar^\perp$ as well.

The modules $\cat(T,U)$ and $\cat(T,U')$ are isomorphic and
$\cat(T,u)$ is an isomorphism so that
there are morphisms $\cat(T,Z)\stackrel{b'}{\gfl}\cat(T,U')$
and $\cat(T,U')\stackrel{a'}{\gfl}\cat(T,X')$ satisfying
$\cat(T,u)\circ a'\circ b' = \cat(T,f)$. Now, the objects
$Z,U'$ and $X'$ all belong to $\cat(T)$ so that there exist
morphisms $\al,\beta$ in $\cat$ with $\cat(T,\al)=a'$
and $\cat(T,\beta)=b'$. We thus have the following diagram in $\cat$:
$$
\xymatrix{
T_0 \bas^g & U' \dr^\al & X' \bas_u \\
Z \ddr^f \hdr^\beta \bas && X \\
\shift T_1 \ar@<-0.6ex>@/^.7pc/@{-->}[urr] &&
}
$$
where the square $f-u\al\beta$ commutes up to a summand in $T^\perp$.
Since $T_0\in\add T$, the composition $(f-u\al\beta)g$ vanishes
and $f-u\al\beta$ factors through $\shift T_1$. This shows that
$f$ factors through $U'\oplus\shift T_1$ which belongs to $\tbar^\perp$,
and we are done.
\end{proof}

\begin{defi}
Let $\widetilde{\sc}$ be the class of morphisms $X\stackrel{s}{\gfl}Y$
in $\cat$ such that, for any triangle
$Z\stackrel{f}{\gfl}X\stackrel{s}{\gfl}Y\stackrel{g}{\gfl}\shift Z$
we have $f\in(\tbar^\perp)$ and $g\in(T^\perp)$.
Note that this is a weaker property than that defining
$\sc$ (where instead of the property
$f\in (\tbar^{\perp})$ we had $Z\in \tbar^{\perp})$.
Therefore $\sc\subseteq \widetilde{\sc}$.

Let $\cat\stackrel{L_{\widetilde{\sc}}}{\gfl}\cat_{\widetilde{\sc}}$
be the localisation functor with respect to the class $\widetilde{\sc}$.
\end{defi}

\begin{lemma}\label{lemma: Stilde and SB0}
 There is a commutative diagram
$$\xymatrix@!C{
\cat \ar[r]^{\cat(T,-)} \ar[d]_{L_{\widetilde{\sc}}}
& \modl \ar[d]^{L_{\sc_{\bc,0}}} \\
\cat_{\widetilde{\sc}} \ar[r]^{G'}
& (\modl)_{\sc_{\bc,0}},
}$$
where $G'$ is an equivalence of categories.
\end{lemma}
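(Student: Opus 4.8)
The plan is to construct $G'$ from the universal property of $L_{\widetilde{\sc}}$ and then exhibit an explicit quasi-inverse, built from the equivalence $\cat(T)/(\shift T)\simeq\modl$. First I would check that the composite $L_{\sc_{\bc,0}}\circ\cat(T,-)$ inverts every morphism of $\widetilde{\sc}$. Given $s\in\widetilde{\sc}$ with a triangle $Z\stackrel{u}{\fl}X\stackrel{s}{\fl}Y\stackrel{g}{\fl}\shift Z$ where $u\in(\tbar^\perp)$ and $g\in(T^\perp)$, applying $\cat(T,-)$ and reading off the long exact sequence shows that $\cat(T,s)$ is an epimorphism (since $\cat(T,g)=0$) whose kernel is the image of $\cat(T,u)$; as $u$ factors through $\tbar^\perp$, this image lies in $\bc$, which is closed under images by Lemma~\ref{lemma: B stable}. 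Hence $\cat(T,s)\in\sc_{\bc,0}$ and becomes invertible after localisation. The universal property of $L_{\widetilde{\sc}}$ then yields $G'$ with $G'L_{\widetilde{\sc}}=L_{\sc_{\bc,0}}\cat(T,-)$, i.e.\ commutativity of the square.

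Two observations drive the rest. Since $T$ is rigid, $\shift T\in T^\perp$, so the morphism $0\fl\shift T$ lies in $\widetilde{\sc}$; thus $\add\shift T\simeq 0$ in $\cat_{\widetilde{\sc}}$ and every morphism of the ideal $(\shift T)$ dies there. Consequently the composite $\cat(T)\hookrightarrow\cat\fl\cat_{\widetilde{\sc}}$ kills $(\shift T)$ and factors through the equivalence $\cat(T)/(\shift T)\simeq\modl$, producing a functor $\overline{P}\colon\modl\fl\cat_{\widetilde{\sc}}$. Secondly, the approximation triangle $Z\fl R_0X\stackrel{\eta_X}{\fl}X\fl\shift Z$ of Lemma~\ref{lemma: cbart contravariantly finite} has $Z\in\tbar^\perp$ (so its first map lies in $(\tbar^\perp)$) and $g\in(T^\perp)$, so $\eta_X\in\widetilde{\sc}$; since $R_0X\in\cbart\subseteq\cat(T)$, every object of $\cat$ is isomorphic in $\cat_{\widetilde{\sc}}$ to an object of $\cat(T)$. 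Density of $G'$ is then immediate, because $L_{\sc_{\bc,0}}$ is the identity on objects and $\cat(T,-)$ is dense onto $\modl$ (cf.\ Proposition~\ref{proposition: F dense}).

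Next I would show that $\overline{P}$ inverts $\sc_{\bc,0}$, so that it descends to $H'\colon(\modl)_{\sc_{\bc,0}}\fl\cat_{\widetilde{\sc}}$. Take $s\in\sc_{\bc,0}$ and lift it, using fullness of $\cat(T,-)$ on $\cat(T)$, to $f\colon X\fl Y$ with $X,Y\in\cat(T)$ and $\cat(T,f)=s$; complete to a triangle $Z\stackrel{u}{\fl}X\stackrel{f}{\fl}Y\stackrel{g}{\fl}\shift Z$. Since $s$ is epi, $\cat(T,g)=0$, and as $Y\in\cat(T)$ this forces $g\in(T^\perp)$ (compose with a right $\add T$-approximation of $Y$ and use $\shift T\in T^\perp$). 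Lemma~\ref{lemma: Z in CT} then gives $Z\in\cat(T)$. The kernel of $\cat(T,f)=s$ is the image of $\cat(T,u)$, which lies in $\bc$, so $\cat(T,u)$ factors through $\bc$, whence $u\in(\tbar^\perp)$ by Lemma~\ref{lemma: B iff XT}. Therefore $f\in\widetilde{\sc}$, so $L_{\widetilde{\sc}}(f)$ is invertible and $\overline{P}(s)$ is an isomorphism. Independence of the chosen lift $f$ modulo $(\shift T)$, together with the vanishing of $(\shift T)$ in $\cat_{\widetilde{\sc}}$, makes $\overline{P}$ (hence $H'$) well defined and functorial.

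Finally I would verify that $G'$ and $H'$ are mutually quasi-inverse. On objects and morphisms arising from $\cat(T)$ the two round-trips reduce to the identity by construction; the two observations of the second paragraph (that $\add\shift T\simeq 0$ and that every object is isomorphic in $\cat_{\widetilde{\sc}}$ to one in $\cbart\subseteq\cat(T)$) then propagate these isomorphisms to all objects, giving natural isomorphisms $G'H'\simeq 1$ and $H'G'\simeq 1$. This simultaneously yields fullness and faithfulness of $G'$. I expect the main obstacle to be the step showing $\overline{P}$ inverts $\sc_{\bc,0}$: there one must recover \emph{both} cone conditions defining $\widetilde{\sc}$ from the purely module-theoretic data ``epimorphism with kernel in $\bc$'', which is exactly where Lemmas~\ref{lemma: Z in CT} and~\ref{lemma: B iff XT} enter, and where the bookkeeping needed to pass between $\cat$, $\cat(T)$ and $\modl$ is most delicate.
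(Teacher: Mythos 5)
Your core computations are exactly the two inclusions that drive the paper's proof, but your overall architecture is genuinely different. The paper quotes from \cite{BMloc1} the fact that $\cat(T,-):\cat\fl\modl$ is itself a localisation functor, namely at the class $\sc_T$ of morphisms whose triangles have both outer maps in $(T^\perp)$; since $\sc_T\subseteq\widetilde{\sc}$, the standard principle for composing localisations reduces the whole lemma to the single equality of classes $\cat(T,\widetilde{\sc})=\sc_{\bc,0}$, and the equivalence $G'$ then comes for free, with no functor to construct and no quasi-inverse to check. Your first paragraph ($s\in\widetilde{\sc}\Rightarrow\cat(T,s)\in\sc_{\bc,0}$, via Lemma~\ref{lemma: B stable}) and your third paragraph (lifting $s\in\sc_{\bc,0}$ to a morphism of $\cat(T)$ lying in $\widetilde{\sc}$, via Lemmas~\ref{lemma: Z in CT} and~\ref{lemma: B iff XT}) are precisely the paper's two inclusions. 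What you do instead of invoking the localisation theorem of \cite{BMloc1} is to rebuild its consequence by hand: define $G'$ by the universal property, define a candidate quasi-inverse $H'$ through $\cat(T)/(\shift T)\simeq\modl$, and check the two composites. This buys self-containedness but costs length, and it pushes work into naturality verifications (your last paragraph) that the paper's route bypasses entirely; those verifications are routine but not contentless, since the round trip $H'G'$ on a general object $X\notin\cat(T)$ returns a chosen lift $W$ of $\cat(T,X)$, and identifying $L_{\widetilde{\sc}}(W)\simeq L_{\widetilde{\sc}}(X)$ already requires combining $\eta_X$ with your third-paragraph argument.

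The one step that needs repair is the assertion that, because $0\fl\shift T$ lies in $\widetilde{\sc}$, ``every morphism of the ideal $(\shift T)$ dies'' in $\cat_{\widetilde{\sc}}$. A localisation of an additive category at an arbitrary class of morphisms is, a priori, only a category: you do not yet know it is additive, nor that $L_{\widetilde{\sc}}(0)$ is a zero object, so ``factors through an object isomorphic to $L_{\widetilde{\sc}}(0)$'' does not yet imply ``is zero'' --- indeed ``zero morphism'' has no meaning there at this stage. Moreover, to factor $\cat(T)\fl\cat_{\widetilde{\sc}}$ through the additive quotient $\cat(T)/(\shift T)$ you need the a priori stronger statement that $L_{\widetilde{\sc}}(u)=L_{\widetilde{\sc}}(u+w)$ whenever $w\in(\shift T)$, because hom-sets in the quotient are quotient groups. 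Both points are fixed by the section trick of \cite[Lemma 3.5]{BMloc1}, which the paper itself uses in the proof of Theorem~\ref{theorem: more localisations}: for $V\in\add\shift T$ (note $\shift T\in\tbar^\perp$ by rigidity of $T$) the projection $[1\;0]:X\oplus V\fl X$ sits in a triangle whose first map factors through $V\in\tbar^\perp$ and whose connecting map is $0$, hence lies in $\widetilde{\sc}$; its two sections $\left[\begin{smallmatrix}1\\ a\end{smallmatrix}\right]$ and $\left[\begin{smallmatrix}1\\ 0\end{smallmatrix}\right]$ therefore have the same image under $L_{\widetilde{\sc}}$, and composing with $[u\;c]$ gives $L_{\widetilde{\sc}}(u+ca)=L_{\widetilde{\sc}}(u)$. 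With this lemma inserted, your construction of $\overline{P}$, hence of $H'$, is sound and the rest of your argument goes through.
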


\begin{proof}
 It is proved in~\cite{BMloc1} that the functor $\cat(T,-) : \cat \fl \modl$
is a localisation functor for the class $\sc_T$ of morphisms $X\stackrel{f}{\gfl}Y$
such that, when completed to a triangle $Z\stackrel{g}{\gfl}X\stackrel{f}{\gfl}Y\stackrel{h}{\gfl}\shift Z$,
we have $g,h\in(T^\perp)$. Since this class is contained in the class $\widetilde{\sc}$,
it is enough to prove $\cat(T,\widetilde{\sc}) = \sc_{\bc,0}$.
Let $s$ be in $\widetilde{\sc}$. There is a triangle
$Z\stackrel{g}{\gfl}X\stackrel{s}{\gfl}Y\stackrel{h}{\gfl}\shift Z$ in $\cat$
with $g\in(\tbar^\perp)$ and $h\in(T^\perp)$. Applying the functor $\cat(T,-)$ gives
an exact sequence in $\modl$:
$$\cat(T,Z)\gfl \cat(T,X) \gfl \cat(T,Y) \stackrel{0}{\gfl} \cat(T,\shift Z),$$
where $\cat(T,g): \cat(T,Z)\fl \cat(T,X)$ factors through some $B\in\bc$.
Thus $\cat(T,s)$ is an epimorphism
and its kernel is isomorphic to a quotient of a submodule of $B$
(see Remark~\ref{remark: quotient of a submodule} below).
By Lemma~\ref{lemma: B stable} the subcategory $\bc$ is stable under taking images and submodules, so that $\cat(T,s)$ belongs to $\sc_{\bc,0}$.

Conversely, let $0\fl B\gfl M\stackrel{f}{\gfl}N\fl 0$ be a short exact sequence in $\modl$,
with $B\in\bc$. There is a morphism $X\stackrel{s}{\gfl}Y$ in $\cat$,
with $X,Y\in\cat(T)$ such that $\cat(T,s)\simeq f$. Complete it to a triangle
$Z\stackrel{u}{\gfl}X\stackrel{s}{\gfl}Y\stackrel{v}{\gfl}\shift Z$ in $\cat$.
Then $v\in(T^\perp)$ since $f$ is an epimorphism, and $\cat(T,u)$ factors through $B$ since $su = 0$.
Lemma~\ref{lemma: Z in CT} shows that $Z$ lies in $\cat(T)$ and we can apply Lemma~\ref{lemma: B iff XT}
to conclude that $u$ factors through $\tbar^\perp$.
\end{proof}

\begin{rk}\label{remark: quotient of a submodule}
Let $L\stackrel{g}{\gfl} M\stackrel{f}{\gfl} N$ be exact in an abelian category.
Assume that the morphism $g$ factors as $u\circ v$ through some object $B$.
Then the kernel of $f$ is isomorphic to a quotient of a subobject of $B$. 
\end{rk}

\begin{proof}
Let $K\stackrel{i}{\fl}M$ be a kernel for $f$. Since the sequence is exact,
there is an epimorphism $L\stackrel{g'}{\gfl}K$ such that $ig' = g$.
The morphism $v$ factors as in the following diagram:
$$
\xymatrix{
B'\; \dri^j & B \bdr^u & & & \\
L \bdre_{g'} \ar@{->>}[u]^p \hdr^v \ddr^g & & M \ddr^f & & N \\
 & K^{\phantom{n}} \hdri_i & & &
}
$$
The composition $fujp = fg$ vanishes so that $fuj = 0$ and there is
some $B'\stackrel{q}{\gfl}K$ so that $iq = uj$. It remains to show that
$q$ is an epimorphism. Since $i$ is a monomorphism, the equalities
$iqp=ujp=g=ig'$ imply $qp=g'$. Since the morphism
$g'$ is an epimorphism, $q$ is an epimorphism also.
\end{proof}

\begin{prop} \label{proposition: F full}
 The functor $F$ is full.
\end{prop}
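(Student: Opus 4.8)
The goal is to prove that the functor $F$ of diagram~\eqref{e:Fdiagram} is full. Recall that $F$ is the composition $\cbart/(\shift\tbar)\hookrightarrow\cat(T)/(\shift T)\xrightarrow{\sim}\modl\xrightarrow{L_{\sc_{\bc,0}}}(\modl)_{\sc_{\bc,0}}$. By Lemma~\ref{lemma: Stilde and SB0}, the target localisation $(\modl)_{\sc_{\bc,0}}$ is equivalent, via $G'$, to the localisation $\cat_{\widetilde\sc}$ of $\cat$ itself, and under this identification $F$ becomes (the restriction to $\cbart/(\shift\tbar)$ of) the localisation functor $L_{\widetilde\sc}\colon\cat\fl\cat_{\widetilde\sc}$ composed with $\cat(T,-)$. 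So the plan is to work entirely inside $\cat_{\widetilde\sc}$, where I have a concrete calculus-of-fractions description of morphisms, rather than fighting directly with morphisms in the module localisation.

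The plan is as follows. Let $X,Y\in\cbart$ and let $\xi$ be a morphism $FX\fl FY$ in $(\modl)_{\sc_{\bc,0}}$; transporting along $G'$, I get a morphism $G'^{-1}\xi$ from $L_{\widetilde\sc}X$ to $L_{\widetilde\sc}Y$ in $\cat_{\widetilde\sc}$. First I would verify that $\widetilde\sc$ admits a calculus of (left or right) fractions, so that this morphism is represented by a roof $X\xleftarrow{s}W\xrightarrow{h}Y$ (or the dual) with $s\in\widetilde\sc$; the fact that $\widetilde\sc$ arises as $\cat(T,-)^{-1}(\sc_{\bc,0})$ together with the localisation statement of Lemma~\ref{lemma: Stilde and SB0} should make the relevant Ore-type conditions available. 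Next, since $X\in\cbart$ is a fixed object and $s\in\widetilde\sc$, I would use the defining property of $\widetilde\sc$: completing $s$ to a triangle $Z\fl W\xrightarrow{s}X\fl\shift Z$ gives a connecting map landing in $(T^\perp)$ and a map out of $Z$ in $(\tbar^\perp)$. The aim is to replace the roof by an honest morphism $X\fl Y$ in $\cbart$ up to the ideal $(\shift T')$, i.e.\ to show that the denominator $s$ can be inverted at the level of $\cbart/(\shift\tbar)$ after composing with an element of $\cat(T,-)$.

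Concretely, the key reduction I would aim for is this: because $X$ lies in $\cbart=T\ast\shift\tbar$ and $s\in\widetilde\sc$, the map $s$ becomes invertible after applying $\cat(T,-)$ and the induced map $\cat(T,s)$ is an element of $\sc_{\bc,0}$, so its kernel lies in $\bc$. I would then lift the composite $\cat(T,h)\circ\cat(T,s)^{-1}$ to a genuine map in $\cat$ using that $\cat(T,-)$ is \emph{full when restricted to $\cat(T)$} (as recalled before Definition~2 of the previous subsection, citing~\cite{KR1,BMloc1}), after first replacing $W$ by an object of $\cat(T)$ via~\cite[Lemma 3.3]{BMloc1}, exactly as in the proof of Lemma~\ref{lemma: B iff XT}. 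Combining this lift with the right $\cbart$-approximation machinery of Lemma~\ref{lemma: cbart contravariantly finite}(b), which produces $\eta_W\colon R_0W\fl W$ inverted in the localisation, should let me factor the roof through $\cbart$ and thereby exhibit $\xi$ as $F(\overline\psi)$ for some $\overline\psi\colon X\fl Y$ in $\cbart/(\shift T')$.

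I expect the main obstacle to be the passage between the two localisation pictures while keeping track of the ideal $(\shift T')$ rather than just $(\shift\tbar)$. The subtlety is that $F$ factors through the quotient $\cbart/(\shift\tbar)\defl\cbart/(\shift T')$, and $\shift R^\ast$ is sent to zero precisely because $\cat(T,\shift R^\ast)\in\bc$; so when I invert $s$ I must ensure that the ambiguity in the lift is absorbed exactly into the ideal $(\shift T')$ and not something larger. Here the computation $\tbar^\perp\cap\cbart=\add\shift T'$ from Lemma~\ref{lemma: compute perps}(b) is the crucial input: any correction term produced by the approximation arguments factors through $\tbar^\perp$, and being in $\cbart$ it must then lie in $\add\shift T'$, hence vanish in $\cbart/(\shift T')$. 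Making this cancellation precise — matching the fraction calculus on $\cat_{\widetilde\sc}$ with the quotient by $(\shift T')$ via Lemmas~\ref{lemma: Z in CT} and~\ref{lemma: B iff XT} — is the technical heart of the argument, and is what I would write out in full.
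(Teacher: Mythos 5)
Your plan has a genuine gap at its very first step: you assume that $\widetilde{\sc}$ admits a calculus of fractions, so that a morphism in $\cat_{\widetilde{\sc}}$ (equivalently, via $G'$ of Lemma~\ref{lemma: Stilde and SB0}, in $(\modl)_{\sc_{\bc,0}}$) can be represented by a single roof $X\stackrel{s}{\longleftarrow}W\stackrel{h}{\gfl}Y$ with $s\in\widetilde{\sc}$. Nothing in the paper establishes Ore-type conditions for $\widetilde{\sc}$ or for $\sc_{\bc,0}$, and your justification ("should make the relevant Ore-type conditions available") is not an argument. This is not a cosmetic omission: the paper explicitly remarks, at the end of the localisation section, that while $\widetilde{\sc}$ is stable under composition it does not appear to satisfy the 2-out-of-3 property, i.e.\ the authors deliberately avoid attributing any good localisation-theoretic behaviour to this class. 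Without the fraction calculus, a morphism $FX\fl FY$ is only a zig-zag of morphisms of $\modl$ in which the backward arrows lie in $\sc_{\bc,0}$, and your whole reduction never gets started. There is also a secondary issue of orientation: for a roof (backward arrow first) you would need to factor $h$ through $s$, which requires $hg=0$ for the map $g\in(\tbar^\perp)$ entering $W$, and this vanishing is not available; the shape that can actually be straightened is the co-roof $X\fl V\longleftarrow U$, because there the vanishing needed comes from the source $X$ lying in $\cbart$.

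The paper's proof is designed precisely to avoid any Ore hypothesis. It takes a single ``hook'' $\cat(T,X)\stackrel{\cat(T,f)}{\gfl}\cat(T,V)\stackrel{\cat(T,s)}{\longleftarrow}\cat(T,U)$ with $X\in\cbart$ and $\cat(T,s)\in\sc_{\bc,0}$, notes (from the proof of Lemma~\ref{lemma: Stilde and SB0}, hence Lemmas~\ref{lemma: Z in CT} and~\ref{lemma: B iff XT}) that $s\in\widetilde{\sc}$, and then uses the triangle $\tbar_1\fl T_0\stackrel{\al}{\fl}X\stackrel{\gamma}{\fl}\shift\tbar_1$ expressing $X\in\cbart$ to produce an honest morphism $c$ with $f=b\gamma+sc$; since $b\gamma$ factors through $\shift\tbar_1$ and $T$ is rigid, applying $\cat(T,-)$ kills it, giving the \emph{exact} identity $\cat(T,s)\circ\cat(T,c)=\cat(T,f)$ in $\modl$. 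Because this straightening keeps the source equal to $\cat(T,X)$ with $X\in\cbart$, one concludes by induction on the number of hooks in an arbitrary zig-zag. So the key content is an exact lifting property of objects of $\cbart$ against $\sc_{\bc,0}$, not a representation of morphisms as fractions; your ingredients (the defining property of $\widetilde{\sc}$, the triangle for $X\in\cbart$, Lemma~\ref{lemma: compute perps}) are the right ones, but they must be deployed hook-by-hook rather than after an unproven reduction to roofs. Incidentally, your worry about absorbing ambiguities into the ideal $(\shift T')$ is unnecessary here: fullness of $F$ concerns $\cbart/(\shift\tbar)$, and the paper's argument produces a genuine morphism in $\cat$, with no ideal bookkeeping required; the passage to $(\shift T')$ only enters for $\fbar$, where it is immediate since the quotient functor is full.
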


\begin{proof}
Let $X\in\cbart$. Then there is a triangle
$\tbar_1 \stackrel{\beta}{\gfl} T_0\stackrel{\al}{\gfl} X\stackrel{\gamma}{\gfl} \shift \tbar_1$ in $\cat$.
Consider a hook diagram
$$
\xymatrix{
& \cat(T,X) \ar[d]^{\cat(T,f)} \\
\cat(T,U) \ar[r]_{\cat(T,s)} & \cat(T,V)
}
$$
in $\modl$, with
$U,V\in\cat(T)$ and $\cat(T,s)\in\sc_{\bc,0}$.
Let us prove that the morphism $\cat(T,f)$ lifts through
the morphism $\cat(T,s)$.
The proof of Lemma~\ref{lemma: Stilde and SB0} shows that
$s$ belongs to $\widetilde{\sc}$.
We thus have a triangle
$W \stackrel{g}{\gfl} U \stackrel{s}{\gfl} V \stackrel{h}{\gfl}\shift U$
in $\cat$ with $g\in(\tbar^\perp)$ and $h\in(T^\perp)$.
The composition $hf\al$ vanishes, so that $f$ induces a morphism of triangles
$$
\xymatrix{
& \tbar_1 \ar[r] \ar[d]^a \ar@/^1pc/@{-->}[dl]_{\susm b}
& T_0 \ar[r]^\al \ar[d]
& X \ar[d]_f \ar@{..>}[dl]_c \ar[r]^\gamma
& \shift\tbar_1\ar[d]^v \ar@{-->}[dl]_b \\
\susm V \ar[r]
& W \ar[r]^g
& U \ar[r]^s
& V \ar[r]^h
& \shift W.
}
$$
The morphism $g$ factors through $\tbar^\perp$ so that
the composition $ga$ is zero, giving the existence of a morphism $b$ such that $hb=v$. The equalities $hf = v\gamma = hb\gamma$
imply the existence of a morphism $c$ such that $f = b\gamma + sc$.
Therefore $\cat(T,s)\circ\cat(T,c) = \cat(T,f)$.
We can conclude by induction on the number of hooks
in a morphism from $\cat(T,X)$ to $\cat(T,Y)$.
\end{proof}

We write $U$ for $\cat(T,\tbar)$. Define
$\lbar$ to be the endomorphism algebra
of $U$ in $\modl$. Then $\lbar\simeq\End_{\cat}(\tbar)$.

\begin{lemma}\label{lemma: modlbar}
The diagram
$$\xymatrix@!C{
\cbart / (\shift\tbar) \ar[r]^{\cat(T,-)}
\ar[dr]_{\cat(\tbar,-)}
& \modl \ar[d]^{\homph_\Lambda(U,-)} \\
& \modlbar
}$$
commutes up to a natural isomorphism.
\end{lemma}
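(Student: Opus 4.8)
The plan is to exhibit, for each object $X\in\cbart$, a comparison map
\[
\theta_X\colon \cat(\tbar,X)\gfl \homph_\Lambda\big(U,\cat(T,X)\big),\qquad
\psi\longmapsto \cat(T,\psi),
\]
and to check that $\theta=(\theta_X)_X$ is an $\lbar$-linear isomorphism, natural in $X$, that descends to $\cbart/(\shift\tbar)$. The map $\theta_X$ is simply the restriction to $\add\tbar$ of the canonical comparison morphism $\cat(T',X)\to\homph_\Lambda(\cat(T,T'),\cat(T,X))$ available for every $T'\in\add T$. Since $\cat(T,-)$ is a covariant functor it identifies $\End_\cat(\tbar)$ with $\End_\Lambda(U)=\lbar$, which is precisely the identification underlying $\lbar\simeq\End_\cat(\tbar)$; I would use it to transport the (pre-composition) $\End_\cat(\tbar)$-action on $\cat(\tbar,X)$ to the $\lbar$-action on $\homph_\Lambda(U,\cat(T,X))$. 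Note also that $\cbart\subseteq\ct=\cat(T)$, so $\cat(T,X)$ is indeed a finitely presented $\Lambda$-module for $X\in\cbart$.

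The first, routine, verifications are $\lbar$-linearity and naturality. For $\phi\in\End_\cat(\tbar)$ one has $\theta_X(\psi\circ\phi)=\cat(T,\psi\circ\phi)=\cat(T,\psi)\circ\cat(T,\phi)$, which is exactly $\theta_X(\psi)$ acted on by the element of $\lbar$ corresponding to $\phi$; thus $\theta_X$ intertwines the two actions. Naturality in $X$ is the functoriality of post-composition: for $h\colon X\to Y$ one gets $\cat(T,h\psi)=\cat(T,h)\circ\cat(T,\psi)$, so $\theta_Y\circ\cat(\tbar,h)=\homph_\Lambda(U,\cat(T,h))\circ\theta_X$.

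The central step is that $\theta_X$ is bijective. Here I would argue by additivity: both sides are additive functors of $\tbar\in\add T$, so it suffices to treat $\tbar=T_i$ a single indecomposable summand. Writing $e_i$ for the corresponding idempotent of $\Lambda$, we have $U=\cat(T,T_i)\cong\Lambda e_i$ projective, whence $\homph_\Lambda(\Lambda e_i,\cat(T,X))\cong e_i\cat(T,X)\cong\cat(T_i,X)$, and one checks this composite isomorphism coincides with $\theta_X$. This establishes that $\theta_X$ is an isomorphism for every $X\in\cat$, in particular for $X\in\cbart$; simultaneously it shows that $\homph_\Lambda(U,\cat(T,X))$ lies in $\modlbar$, since $\cat(T,X)$ is finitely generated and $U$ is finitely generated projective.

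Finally I would check that everything is defined on the quotient $\cbart/(\shift\tbar)$. The functor $\cat(\tbar,-)$ kills the ideal $(\shift\tbar)$: a morphism factoring through $\add\shift\tbar$ yields, after applying $\cat(\tbar,-)$, a factor lying in $\cat(\tbar,\shift\tbar)=\ext^1_\cat(\tbar,\tbar)=0$ by rigidity of $\tbar$. That $\homph_\Lambda(U,\cat(T,-))$ is well defined on the quotient follows from Lemma~\ref{lemma: ec}, which already factors $\cat(T,-)$ through $\cbart/(\shift\tbar)$; since $\theta$ is natural, it descends as well. The only delicate point in the whole argument is the bookkeeping of the module structures and the left/right (opposite-ring) conventions relating $\End_\Lambda(U)$ to $\End_\cat(\tbar)$; once the canonical comparison map is identified with $\theta_X$ and its compatibility with the pre-composition actions is confirmed, the remaining steps are formal.
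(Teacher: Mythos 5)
Your proposal is correct, and its skeleton coincides with the paper's proof: the comparison map is the same ($\psi\mapsto\cat(T,\psi)$), as are the verifications of $\lbar$-linearity (covariance of $\cat(T,-)$) and of naturality. The genuine difference is how bijectivity is established. The paper does it in one clause: $\vph_X$ is an isomorphism because $\cat(T,-)\colon\cbart/(\shift\tbar)\to\modl$ is fully faithful (Lemma~\ref{lemma: ec}), together with the implicit observation that $\cat(\tbar,X)$ equals the Hom-space in the quotient, since rigidity gives $\cat(\tbar,\shift\tbar)=0$. You instead give a self-contained projectivization argument: reduce by additivity to an indecomposable summand $T_i$ of $\tbar$, identify $\cat(T,T_i)\cong\Lambda e_i$, and use $\homph_\Lambda(\Lambda e_i,M)\cong e_iM\cong\cat(T_i,X)$ for $M=\cat(T,X)$. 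Your route is more elementary and slightly more general --- it shows that $\theta_X$ is an isomorphism for every $X\in\cat$, not just for $X\in\cbart$, and it does not need Lemma~\ref{lemma: ec} for this key step (you invoke that lemma only to see that $\homph_\Lambda(U,\cat(T,-))$ descends to the quotient $\cbart/(\shift\tbar)$, where plain rigidity of $T$, i.e.\ $\cat(T,\shift\tbar)=0$, would already suffice). The paper's route is shorter because Lemma~\ref{lemma: ec} is already available at that point; your reduction to indecomposable summands is routine, since the comparison maps commute with the inclusions and projections of a direct-sum decomposition of $\tbar$ by additivity of $\cat(T,-)$.
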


\begin{proof}
 For any $X\in\cbart$, define a map
$\vph_X : \cat(\tbar,X)\gfl \homph_\Lambda\big(\cat(T,\tbar),\cat(T,X)\big)$
by $\vph_X(\al) = \cat(T,\al)$.
Then $\vph_X$ is $\lbar$-linear, since $\cat(T,-)$ is a covariant functor,
and it is an isomorphism, since $\cat(T,-) : \cbart/(\shift\tbar) \fl \modl$ is
fully faithful.
The transformation $\vph$ is easily seen to be natural: Let
$X\stackrel{f}{\gfl}Y$ be a morphism in $\cat$. Write $f_\ast$
for the image of $f$ under the functor $\homph_\Lambda(U,-)\circ\cat(T,-)$.
We have to check that $\phi_Y\circ\cat(\tbar,f) = f_\ast\circ\phi_X$.
The left-hand side of this equality sends a morphism $\tbar\stackrel{\al}{\gfl}X$
to the map sending $u\in\cat(T,\tbar)$ to $(f\circ\al)\circ u$,
while the right-hand side sends $\al$ to
$u\mapsto f\circ(\al\circ u)$.
\end{proof}

\begin{lemma}\label{lemma: modl loc to modlbar}
The functor $\homph_\Lambda(U,-)$ induces a functor
$(\modl)_{\sc_{\bc,0}}\gfl\modlbar$.
\end{lemma}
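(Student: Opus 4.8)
The plan is to apply the universal property of the localisation functor $L_{\sc_{\bc,0}}\colon\modl\gfl(\modl)_{\sc_{\bc,0}}$. Since the localisation has the same objects as $\modl$, a functor out of $\modl$ descends to $(\modl)_{\sc_{\bc,0}}$ precisely when it sends every morphism in $\sc_{\bc,0}$ to an isomorphism. Thus the entire content of the lemma reduces to showing that $\homph_\Lambda(U,-)$ inverts each $f\in\sc_{\bc,0}$.

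The observation that makes this work is that $U=\cat(T,\tbar)=\bigoplus_{j\le n}P_j$ is a \emph{projective} $\Lambda$-module, being a direct sum of the indecomposable projectives $P_j=\cat(T,T_j)$. Consequently $\homph_\Lambda(U,-)$ is exact. Now any $f\in\sc_{\bc,0}$ fits, by the very definition of $\sc_{\bc,0}$, into a short exact sequence $0\fl B\fl M\stackrel{f}{\fl}N\fl 0$ in $\modl$ with $B\in\bc$. Applying the exact functor $\homph_\Lambda(U,-)$ produces a short exact sequence $0\fl\homph_\Lambda(U,B)\fl\homph_\Lambda(U,M)\fl\homph_\Lambda(U,N)\fl 0$ whose middle arrow is $\homph_\Lambda(U,f)$, so it is enough to prove that $\homph_\Lambda(U,B)=0$ for every $B\in\bc$.

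For the vanishing, I would write $B\cong\cat(T,Y)$ with $Y\in\tbar^\perp$, as is possible by the definition of $\bc$ as the essential image of $\tbar^\perp$ under $\cat(T,-)$. Using the standard identification $\homph_\Lambda(P_j,\cat(T,Y))\cong e_j\cat(T,Y)\cong\cat(T_j,Y)$ (the $T_j$-component of $\cat(T,Y)$), one obtains $\homph_\Lambda(U,\cat(T,Y))\cong\bigoplus_{j\le n}\cat(T_j,Y)=\cat(\tbar,Y)$, and this is zero exactly because $Y\in\tbar^\perp$. Hence $\homph_\Lambda(U,f)$ is an isomorphism and the factorisation through $L_{\sc_{\bc,0}}$ follows.

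The only genuinely delicate point --- and hence the step I would treat most carefully --- is this identification $\homph_\Lambda(U,B)\cong\cat(\tbar,Y)$ together with the vanishing $\cat(\tbar,Y)=0$; everything else is formal (the universal property of the localisation and the exactness coming from projectivity of $U$). In fact this vanishing has already been recorded in the discussion around the proof of Theorem~\ref{theoremB}, where it is noted that an object $X$ of $\cat(T)$ lies in $\tbar^\perp$ if and only if $\homph_\Lambda(P_j,\cat(T,X))=0$ for $j\le n$, so I would simply invoke that characterisation of the objects of $\bc$.
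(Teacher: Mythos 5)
Your proof is correct, and it takes a genuinely different route from the paper's. Both arguments reduce, via the universal property of localisation, to showing that $\homph_\Lambda(U,-)$ inverts every morphism of $\sc_{\bc,0}$, but the paper does this by passing back to the triangulated category: it invokes Lemma~\ref{lemma: Stilde and SB0} to write $f\in\sc_{\bc,0}$ as $\cat(T,s)$ with $s\in\widetilde{\sc}$, deduces from the triangle $(r,s,t)$ with $r,t\in(\tbar^\perp)$ and \cite[Lemma 2.5]{BMloc1} that $\cat(\tbar,s)$ is invertible, and then transports this through the natural isomorphism $\homph_\Lambda(U,\cat(T,-))\simeq\cat(\tbar,-)$ of Lemma~\ref{lemma: modlbar}. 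You instead stay entirely inside $\modl$: projectivity of $U$ makes $\homph_\Lambda(U,-)$ exact, the defining short exact sequence $0\fl B\fl M\stackrel{f}{\fl}N\fl 0$ with $B\in\bc$ reduces the claim to the vanishing $\homph_\Lambda(U,B)=0$, and that vanishing follows from the standard identification $\homph_\Lambda(P_j,-)\cong e_j(-)$, which gives $\homph_\Lambda(U,\cat(T,Y))\cong\cat(\tbar,Y)=0$ for $Y\in\tbar^\perp$. Your argument is more elementary and self-contained: it uses neither Lemma~\ref{lemma: Stilde and SB0} (whose proof rests on Lemmas~\ref{lemma: Z in CT} and~\ref{lemma: B iff XT}) nor the result quoted from \cite{BMloc1}; moreover, your idempotent computation is valid on all of $\modl$, whereas Lemma~\ref{lemma: modlbar} as stated only covers $\cbart$, while the paper applies it to morphisms between objects of $\cat(T)$ (a harmless but genuine imprecision that your route avoids). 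What the paper's route buys is economy within its own architecture: Lemma~\ref{lemma: Stilde and SB0} is needed anyway for Theorem~\ref{theorem: fbar equivalence}, and identifying $\homph_\Lambda(U,-)\circ\cat(T,-)$ with $\cat(\tbar,-)$ is exactly the form in which the present lemma is used in Proposition~\ref{proposition: faithful}. Finally, your closing appeal to the proof of Theorem~\ref{theoremB} is not circular (the characterisation recorded there is the same idempotent computation and does not depend on the present lemma), but it is also unnecessary, since you prove the required vanishing directly.
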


\begin{proof}
Suppose that the morphism $f=\cat(T,s)$ lies
in $\sc_{\bc,0}$.
Then $s$ belongs to $\widetilde{\sc}$ by Lemma~\ref{lemma: Stilde and SB0}.
In particular, $s$ is part of a triangle $(r,s,t)$ with $r,t\in (\tbar^\perp)$,
so that $\cat(\tbar,s)$ is an isomorphism (as proved in~\cite[Lemma 2.5]{BMloc1}).
Hence, by Lemma~\ref{lemma: modlbar}, $\homph_\Lambda(U,f)$ is an isomorphism.
\end{proof}

\begin{prop} \label{proposition: faithful}
 The functor $\fbar$ is faithful.
\end{prop}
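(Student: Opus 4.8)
The plan is to build a faithful functor out of $(\modl)_{\sc_{\bc,0}}$ whose composite with $\fbar$ can be recognised directly as faithful; since $G_1\circ G_2$ faithful forces $G_2$ faithful, this suffices. The functor to use is the functor $K\colon(\modl)_{\sc_{\bc,0}}\gfl\modlbar$ induced by $\homph_\Lambda(U,-)$ in Lemma~\ref{lemma: modl loc to modlbar}. So it is enough to show that $K\circ\fbar$ is faithful.

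First I would identify the composite $K\circ\fbar$. By construction $\fbar$ is induced by $F=L_{\sc_{\bc,0}}\circ\cat(T,-)$ restricted to $\cbart/(\shift\tbar)$, and $K$ satisfies $K\circ L_{\sc_{\bc,0}}\cong\homph_\Lambda(U,-)$; hence $K\circ F\cong\homph_\Lambda(U,-)\circ\cat(T,-)$ as functors on $\cbart/(\shift\tbar)$, which by Lemma~\ref{lemma: modlbar} is naturally isomorphic to $\cat(\tbar,-)$. Since $\shift R^\ast\in\tbar^\perp$ and $\tbar$ is rigid we have $\cat(\tbar,\shift T')=0$, so $\cat(\tbar,-)$ annihilates every morphism factoring through $\add\shift T'$ and therefore descends to a functor $\overline{\cat(\tbar,-)}\colon\cbart/(\shift T')\gfl\modlbar$. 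Passing from $F$ to $\fbar$ upon taking the quotient, this yields $K\circ\fbar\cong\overline{\cat(\tbar,-)}$.

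It then remains to show $\overline{\cat(\tbar,-)}$ is faithful. I would take a morphism $X\stackrel{f}{\gfl}Y$ with $X,Y\in\cbart$ such that $\cat(\tbar,f)=0$, and argue that $f=0$ in $\cbart/(\shift T')$. Composing $f$ with a right $\add\tbar$-approximation $\tbar_0\gfl X$ gives zero, because $\cat(\tbar,-)$ is faithful on morphisms with source in $\add\tbar$; hence $f$ factors through the cone of $\tbar_0\gfl X$, which lies in $\tbar^\perp$, so $f$ factors through $\tbar^\perp$. As $X\in\cbart$, Lemma~\ref{lemma: factors} refines this to a factorisation through $\tbar^\perp\cap\cbart$, and by Lemma~\ref{lemma: compute perps}(b) that intersection is exactly $\add\shift T'$. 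Thus $f=0$ in $\cbart/(\shift T')$, which is precisely faithfulness of $\overline{\cat(\tbar,-)}$.

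Finally, $K\circ\fbar\cong\overline{\cat(\tbar,-)}$ is faithful, so $\fbar$ is faithful, completing the proof. I expect the main obstacle to be the bookkeeping in the second paragraph: one must thread the several induced functors and quotient functors carefully so that the composite is genuinely identified with $\overline{\cat(\tbar,-)}$, in particular checking that the descent to $\cbart/(\shift T')$ is compatible on both sides. By contrast, the faithfulness computation in the third paragraph is short once the perpendicularity lemmas are invoked.
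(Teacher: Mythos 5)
Your proof is correct and takes essentially the same route as the paper's: both arguments reduce faithfulness of $\fbar$ to the statement that a morphism out of an object of $\cbart$ killed by $\cat(\tbar,-)$ lies in the ideal $(\shift T')$, using Lemmas~\ref{lemma: modlbar} and~\ref{lemma: modl loc to modlbar} to identify the relevant composite with $\homph_\Lambda(U,-)$ as $\cat(\tbar,-)$. The only cosmetic differences are that the paper cites \cite[Lemma 2.3]{BMloc1} for the step you prove directly (vanishing under $\cat(\tbar,-)$ implies factoring through $\tbar^\perp$), and then concludes via a triangle $\tbar_\al \fl X \fl \shift T'_\beta$ coming from $\cbart=\tbar\ast\shift T'$ rather than via Lemma~\ref{lemma: factors} and Lemma~\ref{lemma: compute perps}(b), which is the same computation in a slightly different package.
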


\begin{proof}
 Assume that $Fu=Fv$ for some
$u,v : X\fl Y$ in $\cbart$.
Then Lemmas~\ref{lemma: modlbar} and~\ref{lemma: modl loc to modlbar}
imply $\cat(\tbar,u) = \cat(\tbar,v)$ and
\cite[Lemma 2.3]{BMloc1} implies $u-v\in(\tbar^\perp)$.
Since $X$ belongs to $\cbart = \tbar\ast\shift T'$, there is a triangle
$\tbar_\al \stackrel{w}{\fl} X \fl \shift T'_\beta \fl$, with
$\tbar_\al\in\add\tbar$ and $T'_\beta\in\add T'$. The composition
$(u-v)w$ vanishes so that $u-v\in(\shift T')$ and the functor $\fbar$ is faithful.
\end{proof}

\begin{proof}[Proof of Theorem~\ref{theorem: fbar equivalence}]
By Proposition~\ref{proposition: F dense},
$F$ is dense, and by Proposition~\ref{proposition: F full}, $F$ is full. Hence $\fbar$ is also full and dense.
By Proposition~\ref{proposition: faithful},
$\fbar$ is faithful, and
Theorem~\ref{theorem: fbar equivalence} follows.
\end{proof}

\subsection{More localisations}
In this section, we prove, under the assumptions as in Section~\ref{s:setup},
that the localisations $\cat_\sc$ and
$\cat_{\widetilde{\sc}}$ are isomorphic.
We note that this result does not seem to follow easily from Lemma~\ref{lemma: Stilde and SB0}, as one would expect by analogy
with~\cite[Section 4]{BMloc1}.

\begin{lemma}\label{lemma: summands}
The full subcategory $\cbart$ of $\cat$ is stable
under taking direct summands.
\end{lemma}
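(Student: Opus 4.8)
The plan is to give an intrinsic characterisation of membership in $\cbart$ in terms of the cone of a minimal right $\add\tbar$-approximation, and then deduce closure under summands from the Krull--Schmidt property. Throughout I would use the description $\cbart = \tbar\ast\shift T'$ from Lemma~\ref{lemma: compute perps}(a), together with the fact that $\shift T'\in\tbar^\perp$ (which holds because $\tbar$ is rigid and $\shift R^\ast\in\tbar^\perp$ by the standing assumptions).

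First I would establish the following characterisation: \emph{an object $X$ lies in $\cbart$ if and only if the cone of a minimal right $\add\tbar$-approximation $\tbar_X\fl X$ of $X$ lies in $\add\shift T'$}. Such a minimal approximation exists since $\cat$ is Krull--Schmidt and Hom-finite and $\add\tbar$ has finitely many indecomposables. For the ``only if'' direction, if $X\in\cbart$ there is a triangle $\tbar_0\fl X\fl\shift T'_1\fl\shift\tbar_0$ with $\tbar_0\in\add\tbar$ and $T'_1\in\add T'$; applying $\cat(\tbar,-)$ and using $\shift T'_1\in\add\shift T'\subseteq\tbar^\perp$ shows that $\tbar_0\fl X$ is itself a right $\add\tbar$-approximation. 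Comparing it with the minimal one gives $\tbar_0\cong\tbar_X\oplus P$ with the approximation vanishing on the summand $P\in\add\tbar$, so its cone is $\shift T'_1\cong C_X\oplus\shift P$, where $C_X$ denotes the cone of $\tbar_X\fl X$. Krull--Schmidt then forces $C_X$ to be a summand of $\shift T'_1\in\add\shift T'$, hence $C_X\in\add\shift T'$. The ``if'' direction is immediate: a triangle $\tbar_X\fl X\fl C_X\fl\shift\tbar_X$ with $C_X\in\add\shift T'$ exhibits $X\in\tbar\ast\shift T'=\cbart$.

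With the characterisation in hand, closure under summands is short. Given $X\oplus Y\in\cbart$, choose minimal right $\add\tbar$-approximations $\tbar_X\fl X$ and $\tbar_Y\fl Y$ with cones $C_X$ and $C_Y$; their direct sum $\tbar_X\oplus\tbar_Y\fl X\oplus Y$ is again a minimal right $\add\tbar$-approximation (the direct sum of right minimal morphisms is right minimal), with cone $C_X\oplus C_Y$. Applying the characterisation to $X\oplus Y$ and using uniqueness of minimal approximations, we get $C_X\oplus C_Y\in\add\shift T'$; since $\add\shift T'$ is closed under direct summands, both $C_X$ and $C_Y$ lie in $\add\shift T'$, and the ``if'' direction of the characterisation gives $X,Y\in\cbart$.

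The main obstacle is the forward direction of the characterisation: a defining triangle for $X\in\cbart$ supplies only \emph{some} right $\add\tbar$-approximation, not the minimal one, and passing to the minimal approximation alters the cone by a shifted summand of $\tbar$. The point to handle carefully is that this extra term is a summand of an object of $\add\shift T'$, so that after cancellation the minimal cone still lies in $\add\shift T'$; this is precisely where the Krull--Schmidt property is used.
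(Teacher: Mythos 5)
Your proof is correct, but it takes a genuinely different route from the paper's. You work with the exchange description $\cbart=\tbar\ast\shift T'$ from Lemma~\ref{lemma: compute perps}(a) and with minimal right $\add\tbar$-approximations, distilling an intrinsic membership criterion ($X\in\cbart$ if and only if the cone of its minimal right $\add\tbar$-approximation lies in $\add\shift T'$); closure under summands then follows from two soft facts --- a direct sum of right minimal morphisms is right minimal, and a direct sum of triangles is a triangle, so the minimal approximation triangle of $X\oplus Y$ is the direct sum of those of $X$ and $Y$ --- together with summand-closure of $\add\shift T'$. The paper instead argues from the defining description $\cbart=T\ast\shift\tbar$ with minimal right $\add T$-approximations: writing $U_1$, $V_1$, $W$ for the objects completing the minimal approximations of $X$, $X'$ and $X\oplus X'$ to triangles, it invokes the nine lemma to produce a triangle $U_1\fl W\fl V_1\fl\shift U_1$ and then shows this triangle splits, because the connecting morphism $V_1\fl\shift U_1$ factors through $V_0$ and $\cat(V_0,\shift U_1)=0$ (rigidity of $T$ plus the approximation property); hence $U_1,V_1$ are summands of $W\in\add\tbar$. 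Both proofs share the same Krull--Schmidt cancellation core: any right approximation is the minimal one plus a zero map, so the minimal (co)cone is a summand of the (co)cone of any approximation --- this is how the paper gets $W\in\add\tbar$ and how you get $C_X\in\add\shift T'$. What your argument buys is the complete avoidance of the nine-lemma diagram chase (indeed, your direct-sum-of-triangles observation, applied to the paper's triangles, would yield $W\cong U_1\oplus V_1$ at once and shorten the paper's proof as well), plus a reusable characterisation of $\cbart$. What the paper's argument buys is independence from Lemma~\ref{lemma: compute perps}(a), since it works straight from the definition of $\cbart$; your reliance on that lemma is nonetheless unproblematic, as it is proved earlier in the paper and without reference to the present statement, so there is no circularity.
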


\begin{proof}
Let $X,X'\in\cat$ be so that
$X\oplus X'$ belongs to $\cbart$. Let
$U_0 \fl X$, $V_0\fl Y$ be minimal right $\add T$-approximations.
Then $U_0\oplus V_0 \fl X\oplus X'$ is a minimal right $\add T$-approximation.
When completing it to a triangle $W \fl U_0\oplus V_0 \fl X\oplus X' \fl$,
we thus have $W\in\add\tbar$. The nine lemma gives a commutative diagram
whose rows and columns are triangles in $\cat$:
$$
\xymatrix@-.5pc{
\susm X \dr \bas & \susm(X\oplus X') \dr \bas & \susm X' \bas \dr^0 & X \bas \\
U_1 \dr \bas & W \dr \bas & V_1 \dr \bas & \shift U_1 \bas \\
U_0 \dr \bas & U_0\oplus V_0 \dr \bas & V_0\dr^0 \bas & \shift U_0 \bas\\
X \dr & X\oplus X' \dr & X' \dr^0 & \shift X \\
}
$$
We want to show that the triangle in the second row splits, which
would then imply that $U_1$ and $V_1$, being summands of $W$, belong to $\add\tbar$.
The composition $\susm X' \fl V_1 \fl \shift U_1$ is zero so that
the morphism $V_1 \fl \shift U_1$ factors through $V_1\fl V_0$.
But there are no non-zero morphisms from $V_0$ to $\shift U_1$ since
$V_0\in\add T$ and $\shift U_1$ is the cone of the right $\add T$-approximation
$U_0\fl X$.
\end{proof}

\begin{lemma}\label{lemma: inverse}
Let $X$ and $Y$ be objects in $\cbart$, and let
$X\stackrel{s}{\gfl}Y$ be a morphism in $\widetilde{\sc}$.
Then there exist $\ubar\in\add\tbar$, and morphisms
$\shift\ubar\stackrel{c}{\gfl}Y$, $Y\stackrel{a}{\gfl}\shift\ubar$,
and $Y\stackrel{d}{\gfl}X$ in $\cat$ such that:
\begin{enumerate}
 \item The morphism $X\oplus\shift\ubar \stackrel{[s\;c]}{\gfl}Y$
is in $\sc$;
 \item The image in $\cat_\sc$ of the morphism $Y\stackrel{\left[^d_a\right]}{\gfl}
X\oplus\shift\ubar$ is inverse to $[s\;c]$.
\end{enumerate}
In particular, all morphisms in $\cbart$ which belong to $\widetilde{\sc}$
are inverted by the localisation functor $L_\sc: \cat\fl\cat_\sc$.
\end{lemma}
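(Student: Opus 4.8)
The plan is to realise $s$ as the restriction of a morphism in $\sc$ by correcting its cone, and then to invert that corrected morphism explicitly. Throughout, write the defining triangle of $s\in\widetilde{\sc}$ as $Z\stackrel{f}{\fl}X\stackrel{s}{\fl}Y\stackrel{g}{\fl}\shift Z$, so that $f\in(\tbar^\perp)$ and $g\in(T^\perp)$. First I would take a minimal right $\add\tbar$-approximation $u:\ubar\fl Z$ (so $\ubar\in\add\tbar$) and complete it to a triangle $\ubar\stackrel{u}{\fl}Z\stackrel{p}{\fl}Z''\stackrel{w}{\fl}\shift\ubar$; applying $\cat(\tbar,-)$ gives $Z''\in\tbar^\perp$. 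Since $f$ factors through $\tbar^\perp$ and $\ubar\in\add\tbar$, the composite $fu$ vanishes, so $f=\bar f\circ p$ for some $\bar f:Z''\fl X$. Feeding the factorisation of $Z\stackrel{p}{\fl}Z''\stackrel{\bar f}{\fl}X$ into the octahedral axiom produces a morphism $c:\shift\ubar\fl Y$ together with a triangle $Z''\fl X\oplus\shift\ubar\stackrel{[s\;c]}{\fl}Y\stackrel{g'}{\fl}\shift Z''$ whose connecting map is $g'=-\shift p\circ g$.

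For part (1) I must check $[s\;c]\in\sc$. Its fibre term is $Z''\in\tbar^\perp$, as required, and its connecting map is $g'=-\shift p\circ g$; since $g$ factors through some $P\in T^\perp$, so does $g'=(-\shift p)\circ g$, whence $g'\in(T^\perp)$ and $[s\;c]\in\sc$. (Using $Y\in\ct=T\ast\shift T$, or better $Y\in\cbart=\tbar\ast\shift T'$, one sees in fact that $g$, and hence $g'$, factors through $\add\shift T'$, which lies in $\tbar^\perp$ because $\ext^1(\tbar,T')=0$; this refinement is used below.) This also yields the final clause at once: as $\tbar$ is rigid, $\cat(\tbar,\shift\tbar)=\ext^1(\tbar,\tbar)=0$, so $\shift\ubar\in\add\shift\tbar\subseteq\tbar^\perp$, and for any $W\in\tbar^\perp$ the morphism $W\fl 0$ lies in $\sc$ (its triangle has fibre $W\in\tbar^\perp$ and zero connecting map), so $W\cong 0$ in $\cat_\sc$. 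Thus $\shift\ubar\cong 0$ in $\cat_\sc$, the split inclusion $X\hookrightarrow X\oplus\shift\ubar$ becomes invertible under $L_\sc$, and since $[s\;c]\in\sc$ is inverted and $[s\;c]\circ(\mathrm{incl})=s$, the morphism $s$ is inverted by $L_\sc$.

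It remains to produce $d:Y\fl X$ and $a:Y\fl\shift\ubar$ with $L_\sc\!\left[^d_a\right]=\bigl(L_\sc[s\;c]\bigr)^{-1}$. As $[s\;c]$ is already inverted, it suffices to exhibit $\left[^d_a\right]$ as a one-sided inverse in $\cat_\sc$, i.e.\ to arrange that $1_Y-[s\;c]\circ\left[^d_a\right]$ is killed by $L_\sc$. Applying $\cat(Y,-)$ to the triangle above gives an exact sequence in which post-composition with $g'$ sends $1_Y$ to $g'$; hence the achievable differences $1_Y-[s\;c]\circ\left[^d_a\right]$ are precisely the endomorphisms $e$ of $Y$ with $g'\circ e=g'$. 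The task is therefore to find such an $e$ factoring through an object that vanishes in $\cat_\sc$ (for instance through $\add\shift T'\subseteq\tbar^\perp$), exploiting the factorisation $g'=m\circ\pi'$ through the component $\pi':Y\fl\shift T'_Y$ of the defining triangle $\tbar_Y\fl Y\stackrel{\pi'}{\fl}\shift T'_Y\fl\shift\tbar_Y$ of $Y\in\cbart$; a dual argument supplies the matching left inverse, and the two assemble into $\left[^d_a\right]$.

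The main obstacle is exactly this last step. The construction of $\ubar$ and $c$ and the verifications in the first two paragraphs are formal manipulations with the octahedral axiom and the rigidity vanishings. In contrast, an isomorphism in the localisation need not be represented by an honest morphism of $\cat$, so producing $d$ and $a$ in $\cat$ (rather than as a roof/fraction) is genuinely delicate: it amounts to splitting $[s\;c]$ modulo $(\tbar^\perp)$, and this does \emph{not} follow from $g'\in(\tbar^\perp)$ alone, since $\pi'$ is typically not a split epimorphism. I expect one must use the minimality of the approximation $u:\ubar\fl Z$ together with the specific shape $Y\in\tbar\ast\shift T'$ to build the section, and then verify — by a diagram chase with the exact $\cat(Y,-)$- and $\cat(-,X\oplus\shift\ubar)$-sequences of the triangle and the vanishings $\ext^1(T,\tbar)=\ext^1(T,T)=0$ — that both composites with $[s\;c]$ agree with the relevant identities modulo morphisms through $\tbar^\perp$, so that they become mutually inverse after applying $L_\sc$.
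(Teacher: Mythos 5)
Your construction of $c$ and of the triangle $Z''\fl X\oplus\shift\ubar\stackrel{[s\;c]}{\gfl}Y\stackrel{g'}{\gfl}\shift Z''$ (the octahedron applied to the factorisation $f=\bar f\circ p$ through the cone of a right $\add\tbar$-approximation of $Z$) is sound; it is the same homotopy-pushout technology the paper invokes via Hubery's Axiom B', and it does give part (1), and from it the final clause (your observation that $\shift\ubar$, and hence the split inclusion $X\fl X\oplus\shift\ubar$, die in $\cat_\sc$ is correct and is in fact a slightly cleaner route to that clause than the paper's). But part (2) is a genuine gap, and you say so yourself: you never produce the morphisms $d$ and $a$ in $\cat$, only a reduction (``find $e$ with $g'e=g'$ factoring through something killed by $L_\sc$'') that you do not carry out. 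Part (2) is not an optional refinement: the statement asserts the existence of $d$ and $a$ as morphisms of $\cat$, and the paper later uses precisely the identity $sd=1_Y-ca$ with $ca\in(\tbar^\perp)$, in the proof of Theorem~\ref{theorem: more localisations}, to establish that $L_\sc$ is full. So your proposal proves strictly less than the lemma.

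The missing idea is the choice of $\ubar$. You take $\ubar$ to approximate the cocone $Z$; the paper instead takes $\ubar$ from a presentation of $Y$ itself: since $Y\in\cbart=T\ast\shift\tbar$, there is a triangle $\ubar\fl U\stackrel{\al}{\gfl}Y\stackrel{a}{\gfl}\shift\ubar$ with $U\in\add T$ and $\ubar\in\add\tbar$, which hands you $a$ for free. Writing the triangle of $s$ as $X\stackrel{s}{\fl}Y\stackrel{v}{\fl}\shift Z\stackrel{u}{\fl}\shift X$ (so $v\in(T^\perp)$ and $u\in((\shift\tbar)^\perp)$), one then gets in three steps: $v\al=0$ since $U\in\add T$, so $v=ba$ for some $b$; $ub=0$ since $\shift\ubar\in\add\shift\tbar$, so $b=vc$ for some $c:\shift\ubar\fl Y$; and $v(1_Y-ca)=v-ba=0$, so $1_Y-ca=sd$ for some $d:Y\fl X$. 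Thus $[s\;c]\left[^d_a\right]=1_Y$ holds \emph{on the nose} in $\cat$ --- no splitting ``modulo $(\tbar^\perp)$'' is needed --- and (2) follows immediately from (1), since a one-sided inverse in $\cat$ of a morphism inverted by $L_\sc$ is its two-sided inverse in $\cat_\sc$. (With this choice of $c$ one still gets (1): the section forces the pushout triangle to split, so the connecting map is $0$, and $b$ is checked to be a right $\add\shift\tbar$-approximation of $\shift Z$, which puts the cocone in $\tbar^\perp$; this replaces your direct computation of $g'$.) Your closing instinct was right that the membership of $Y$ in $\cbart$ must be exploited, but it is the description $Y\in T\ast\shift\tbar$, used at the very start to choose $\ubar$ and $a$, rather than the description $Y\in\tbar\ast\shift T'$ applied after the fact to the connecting map.
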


\begin{proof}
Let $X\stackrel{s}{\fl}Y$ be a morphism in
$\widetilde{\sc}$, with $X$ and $Y$ in $\cbart$.
Complete it to a triangle
$X\stackrel{s}{\fl}Y \stackrel{v}{\fl} \shift Z \stackrel{u}{\fl}\shift X$.
We first show how to define the object $\ubar$ and the morphisms $a,c,d$.
By assumption, the morphisms $v$ and $u$ factor through
$T^\perp$ and $(\shift\tbar)^\perp$, respectively.
There is a triangle $\ubar \fl U \stackrel{\al}{\fl} Y \stackrel{a}{\fl} \shift \ubar$,
with $U\in\add T$ and $\ubar\in\add\tbar$.
Since $v$ is in $(T^\perp)$, the composition $v\al$ vanishes
and there is a morphism $b$ as in the diagram below, such that
$v = ba$.
$$
\xymatrix{
& U \bas_\al & & \\
X \dr^s & Y \bas_a \dr^v \ar@/^.5pc/@{-->}[l]^d & \shift Z \dr^u & \shift X \\
& \shift\ubar \ar@/_.5pc/@{-->}[ur]_b \ar@/_1pc/@{-->}[u]_c & &
}
$$
The composition $ub$ also vanishes since $u$ factors through $(\shift\tbar)^\perp$.
Therefore, there is a morphism $c$ such that $b=vc$. Moreover, there is a morphism
$d$ such that $1_Y = sd +ca$. Indeed, we have the following equalities:
$vca = ba = v$ so that $1_Y-ca$ factors through $s$.

Before showing that (1) and (2) are satisfied, we need a bit of preparation.
We may complete the above diagram to the following
commutative diagram, whose rows and columns are
triangles in $\cat$:
$$
\xymatrix{
X \ar[r]^s & Y \ar[r]^v & \shift Z \ar[r]^u & \shift X \\
Z' \ar[u]_e \ar[r]_f & \shift \ubar \ar[r]_b \ar[u]_c & \shift Z \ar[r] \ar@{=}[u] & \shift Z' \ar[u]
}$$
so as to obtain the triangle:
\begin{equation}
\label{e:thirdrow}
\xymatrix{
Z' \ar[r]^(0.4){\left[ \begin{smallmatrix} e \\ -f \end{smallmatrix} \right]} & X \oplus \shift \ubar \ar[r]^(0.6){[s\;c]} & Y \ar[r] & \shift Z'. }
\end{equation}
(see~\cite[Axiom B']{Hubery}).
Applying the octahedral axiom to the composition
$$X \stackrel{\left[^1_0\right]}{\gfl} X\oplus\shift\ubar \stackrel{[s\;c]}{\gfl}Y$$
yields the following commutative diagram whose rows and columns are triangles in $\cat$.
$$
\xymatrix{
& \ubar \bas \dreg & \ubar \bas^0 & \\
\susm Y \baseg \dr & Z \dr \bas & X \dr^s \bas & Y \baseg \\
\susm Y \dr & Z' \bas \dr & X\oplus\shift\ubar \bas \dr^{\;\;\;[s\;c]} & Y \\
& \shift\ubar \dreg & \shift \ubar &
}
$$
Note that, via an isomorphism of triangles if necessary, we may assume that the triangle in the lower row is the same as that in~\eqref{e:thirdrow}, and
thus that the morphism from $Z'$ to $\shift U$ is $-f$.
Hence, similarly, we may assume that the morphism from
$\ubar$ to $Z$ is $\shift^{-1}b$.

By construction, the morphism $[s\;c]$ admits a section $\left[^d_a\right]$
so that the triangle in the lower row splits.
Hence, $Z'$ is a summand
of $X\oplus\shift\ubar$, and thus belongs to $\cbart$ by Lemma~\ref{lemma: summands}.

We show moreover, that $Z'$ belongs to $\tbar^\perp$. 
Firstly, we note that $b$ is a right $\add \shift \tbar$-approximation of $\shift Z$. This holds since any morphism $\shift\overline{V}\fl\shift Z$ with $\overline{V}\in\add\tbar$ factors through $v$ since its composition with $u$ is zero, and thus factors through $b$ since $ba=v$. Hence $-\shift^{-1}b$ is a right
$\add \tbar$-approximation of $Z$.

Applying the functor $\cat(\tbar,-)$ to the triangle
$\ubar\fl Z\fl Z'\fl \shift\ubar$ gives $Z'\in\tbar^\perp$. By Lemma~\ref{lemma: compute perps}, $Z'$ belongs to $\add\shift T'$.
It is now easy to check (1): We constructed a triangle
$$Z'\fl X\oplus\shift\ubar \stackrel{[s\;c]}{\gfl} Y \stackrel{0}{\fl}\shift Z'$$
in $\cat$, where $Z'$ belongs to $\add\shift T'$. Hence $Z'$ belongs to $\tbar^\perp$.

We now check (2). We have
$[s\;c]\left[^d_a\right] = 1_Y$ in $\cat$.
Since $[s\;c]$ lies in $\sc$ by (1),
it is invertible in $\cat_{\sc}$ and
(2) follows.

Finally, we check the last part of the statement.
Let $\pi:X\oplus \shift \ubar \fl X$ be the first
projection. Extending $\pi$ to a triangle in $\cat$,
we have:
$$\shift \ubar \fl X\oplus \shift \ubar \stackrel{\pi}{\fl} X \stackrel{0}{\fl} \shift^2 \ubar$$.
Since $\shift \ubar\in \tbar^{\perp}$ and the zero map
factors through $T^{\perp}$, we see that $\pi\in \sc$.
Furthermore, $s\pi=[s\,0]:X\oplus \shift \ubar \fl Y$,
so $s\pi-[s\,c]=[0,-c]$ factors through $\shift\ubar$,
where $\shift\ubar$ lies in $\shift T'$. Morphisms of the form
$A\oplus V \stackrel{[1\,0]}{\gfl} A$, with $V\in\tbar^\perp$, lie in $\sc$.
Hence, as in \cite[Lemma 3.5]{BMloc1}, $L_{\sc}(s)L_{\sc}(\pi)=L_{\sc}([s\,c])$.
Since $\pi,[s\,c]$ both lie in $\sc$, their images
under $L_{\sc}$ are invertible in $\cat_{\sc}$, and it
follows that the image $L_{\sc}(s)$ is also invertible
in $\cat_\sc$, as required.
\end{proof}

For a morphism $f$ which is part of a triangle
$Z\stackrel{g}{\fl} X\stackrel{f}{\fl}Y \stackrel{h}{\fl}\shift Z$,
recall that $f$ belongs to the collection $\sc$ if and only if
$Z$ belongs to $\tbar^\perp$ and $h$ factors through $T^\perp$;
and that $f$ belongs to $\widetilde{\sc}$ if and only if
$g$ factors through $\tbar^\perp$ and $h$ factors through $T^\perp$.

\begin{theo}\label{theorem: more localisations}
There is an isomorphism of categories
$$\cat_{\sc} \simeq \cat_{\widetilde{\sc}}.$$
\end{theo}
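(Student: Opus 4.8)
The plan is to exploit the inclusion $\sc\subseteq\widetilde{\sc}$ together with the universal property of localisation. Since $\sc\subseteq\widetilde{\sc}$, the functor $L_{\widetilde{\sc}}$ inverts every morphism of $\sc$, so it factors uniquely through $L_\sc$, yielding $P\colon\cat_\sc\to\cat_{\widetilde{\sc}}$ with $PL_\sc=L_{\widetilde{\sc}}$; as all the localisation functors are the identity on objects, so is $P$. To produce an inverse it suffices to show that $L_\sc$ inverts every morphism of $\widetilde{\sc}$: then the universal property of $L_{\widetilde{\sc}}$ yields $Q\colon\cat_{\widetilde{\sc}}\to\cat_\sc$ with $QL_{\widetilde{\sc}}=L_\sc$, and the uniqueness clauses force $QP=\mathrm{id}$ and $PQ=\mathrm{id}$. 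Since $P$ and $Q$ are the identity on objects, this is an isomorphism of categories. So the whole theorem reduces to the claim that every $s\in\widetilde{\sc}$ is inverted by $L_\sc$.

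My strategy for that claim is to reduce to morphisms inside $\cbart$, where Lemma~\ref{lemma: inverse} already does the job. Given $s\colon X\to Y$ in $\widetilde{\sc}$, I would use the contravariant finiteness of $\cbart$ (Lemma~\ref{lemma: cbart contravariantly finite}) to choose right $\cbart$-approximations $\eta_X\colon R_0X\to X$ and $\eta_Y\colon R_0Y\to Y$, both lying in $\sc$. Since $R_0X\in\cbart$ and $\eta_Y$ is a right $\cbart$-approximation, the composite $s\eta_X$ lifts to $\tilde s\colon R_0X\to R_0Y$ with $\eta_Y\tilde s=s\eta_X$. Because $\eta_X,\eta_Y\in\sc$ are inverted by $L_\sc$, it is enough to invert $L_\sc(\tilde s)$, and as $\tilde s$ is a morphism between objects of $\cbart$ this would follow from Lemma~\ref{lemma: inverse}, provided $\tilde s\in\widetilde{\sc}$.

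The hard part is exactly here. Membership in $\widetilde{\sc}$ is detected functorially: writing $Z\xrightarrow{g}A\xrightarrow{f}B\xrightarrow{h}\shift Z$ for a completing triangle, one has $g\in(\tbar^\perp)$ iff $\cat(\tbar,f)$ is mono and $h\in(T^\perp)$ iff $\cat(T,f)$ is epi, using that the kernel ideals of $\cat(\tbar,-)$ and $\cat(T,-)$ are $(\tbar^\perp)$ and $(T^\perp)$ respectively (as in \cite[Lemma 2.3]{BMloc1} and its analogue for $T$, both valid since $T$ and $\tbar$ are rigid). A short diagram chase shows $\cat(\tbar,\tilde s)$ is an isomorphism: $\cat(\tbar,\eta_X),\cat(\tbar,\eta_Y)$ are isomorphisms because $\eta_X,\eta_Y\in\sc$, and $\cat(\tbar,s)$ is an isomorphism because $s\in\widetilde{\sc}$, while $\eta_Y\tilde s=s\eta_X$. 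However $\cat(T,\tilde s)$ need \emph{not} be surjective: its cokernel is a subquotient of $\ker\cat(T,\eta_Y)$ and hence lies in $\bc$, but may be nonzero. Thus $\tilde s$ need not lie in $\widetilde{\sc}$, and this defect is what must be repaired.

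The repair is to enlarge the source of $\tilde s$ by a right $\add\shift T'$-approximation. I would pick $p\colon\shift T'_0\to R_0Y$ a right $\add\shift T'$-approximation of $R_0Y$ and form $[\,\tilde s\ p\,]\colon R_0X\oplus\shift T'_0\to R_0Y$. Since $\shift T'\subseteq\tbar^\perp$ (Lemma~\ref{lemma: compute perps}) we have $\cat(\tbar,\shift T'_0)=0$, so $\cat(\tbar,[\,\tilde s\ p\,])=\cat(\tbar,\tilde s)$ is still an isomorphism, giving the $(\tbar^\perp)$ condition. For surjectivity of $\cat(T,[\,\tilde s\ p\,])$, the cokernel of $\cat(T,\tilde s)$ is covered by maps $T\to R_0Y$ factoring through $\tbar^\perp$; since $T\in\cbart$, Lemma~\ref{lemma: factors} factors each such map through $\tbar^\perp\cap\cbart=\add\shift T'$, hence through $p$. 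So $\cat(T,[\,\tilde s\ p\,])$ is epi, whence $[\,\tilde s\ p\,]\in\widetilde{\sc}$ with source and target in $\cbart$, and Lemma~\ref{lemma: inverse} inverts it under $L_\sc$. Finally, the projection $R_0X\oplus\shift T'_0\to R_0X$ splits off a $\tbar^\perp$-summand and so lies in $\sc$; hence the inclusion $R_0X\to R_0X\oplus\shift T'_0$ is inverted by $L_\sc$, and composing it with $[\,\tilde s\ p\,]$ recovers $\tilde s$, showing $L_\sc(\tilde s)$ is invertible. Tracing back through $\eta_X,\eta_Y$ then inverts $L_\sc(s)$, which completes the reduction and the proof.
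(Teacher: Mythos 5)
Your proof is correct, and it takes a genuinely different route from the paper's. The paper argues indirectly: it chains the equivalences $\cat_{\widetilde{\sc}}\simeq(\modl)_{\sc_{\bc,0}}$ (Lemma~\ref{lemma: Stilde and SB0}) and $(\modl)_{\sc_{\bc,0}}\simeq\cbart/(\shift T')$ (Theorem~\ref{theorem: fbar equivalence}), then shows that $L_\sc$ induces an equivalence $\cbart/(\shift T')\to\cat_\sc$ (dense by Lemma~\ref{lemma: cbart contravariantly finite}, full by Lemma~\ref{lemma: inverse}, faithful via a detour through $\modlbar$ and \cite[Lemma 2.3]{BMloc1}), and concludes via an identity-on-objects comparison functor. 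You instead prove directly that $L_\sc$ inverts every morphism of $\widetilde{\sc}$ and finish with the universal properties. Notably, the remark that the paper places immediately after its proof flags exactly your strategy and explains why the authors abandoned it: after lifting $s\in\widetilde{\sc}$ to $\tilde s\colon R_0X\to R_0Y$ between objects of $\cbart$, they saw no way to conclude $\tilde s\in\widetilde{\sc}$, since $\widetilde{\sc}$ does not appear to satisfy the 2-out-of-3 property. Your repair resolves precisely this obstruction: you never claim $\tilde s\in\widetilde{\sc}$, but instead pass to $[\tilde s\;p]\colon R_0X\oplus\shift T'_0\to R_0Y$ with $p$ a right $\add\shift T'$-approximation, verify membership in $\widetilde{\sc}$ through the functorial criterion ($\cat(\tbar,-)$ mono and $\cat(T,-)$ epi), where the crucial point --- that the defect of surjectivity of $\cat(T,\tilde s)$ factors through $p$ --- follows from Lemma~\ref{lemma: factors} together with $\cbart\cap\tbar^\perp=\add\shift T'$ (Lemma~\ref{lemma: compute perps}), then apply Lemma~\ref{lemma: inverse} and strip off the added summand using that the projection off a $\tbar^\perp$-summand lies in $\sc$. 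This is the same ``stabilise by a $\shift T'$-approximation'' device the paper itself uses in proving Theorem~\ref{theorem: equivalence of categories}, here redeployed to close the gap the authors pointed out. As for what each approach buys: yours is shorter and independent of the module-theoretic machinery of Theorem~\ref{theorem: fbar equivalence}; the paper's longer route is needed anyway, since identifying both localisations with the subquotient $\cbart/(\shift T')$ is of independent interest and is what justifies the paper's claim that no set-theoretic difficulties arise.
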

\begin{proof}
As proved in Lemma~\ref{lemma: Stilde and SB0},
the categories $\cat_{\widetilde{\sc}}$
and $(\modl)_{\sc_{\bc,0}}$ are equivalent.
By Theorem~\ref{theorem: fbar equivalence}, the category
$(\modl)_{\sc_{\bc,0}}$ is equivalent to $\cbart/(\shift T')$.

It is easy to check that any morphism of the form
$X\oplus U \stackrel{[1\,0]}{\gfl} X$, with $U\in \tbar^{\perp}$,
lies in $\sc$.
Hence, arguing as in~\cite[Lemma 3.5]{BMloc1},
if $u,v$ are any morphisms in $\cat$ such that $v$ factors through $\tbar^{\perp}$, then
$L_{\sc}(u)=L_{\sc}(u+v)$.
It follows that $L_{\sc}$ induces a functor from
$\cbart/(\add \shift T')$ to $\cat_{\sc}$, which
we also denote by $L_{\sc}$. Since $\widetilde{\sc}$
contains $\sc$, the same argument applies to $L_{\widetilde{\sc}}$.
Furthermore, by the universal
property of localization, the left hand side of the
diagram
\[
 \xymatrix{
 & \cbart / (\add\shift T') \bg \bas \bdr^\simeq & \\
\cat_\sc \dr & \cat_{\widetilde{\sc}} \dr^\simeq & (\modl)_{\sc_{\bc,0}}
}
\]
commutes, where the functor $\cat_\sc\fl\cat_{\widetilde{\sc}}$
is the identity on objects.
The right hand triangle commutes by Lemma~\ref{lemma: Stilde and SB0}
and~\eqref{e:Fdiagram}.
It is thus enough to show that the functor  $L_\sc : \cat_\sc\fl\cbart/(\shift T')$
is an equivalence of categories.

The functor $L_\sc$ is dense by Lemma~\ref{lemma: cbart contravariantly finite}.

We next check that $L_{\sc}$ is full.
Let $X,Y$ be objects of $\cbart$ and let
$s:X\rightarrow Y$ be a morphism in $\sc$. By
part (2) of Lemma~\ref{lemma: inverse},
$sd=1_Y-ca$, where $ca$ factors through
$\tbar^{\perp}$. Arguing as above, we have that
$L_{\sc}(s)L_{\sc}(d)=L_{S}(1_Y)$,
so $L_{\sc}(d)=L_{\sc}(s)^{-1}$. It follows
that $L_{\sc}$ (on $\cbart/(\add \shift T')$) is
full.

It thus remains to prove that $L_{\sc}$ is faithful. Via the use of the functor
$\cat(\tbar,-)$ and of the category $\modlbar$, this follows from results in~\cite{BMloc1}.
Recall that we write $\overline{\Lambda}$ for the endomorphism algebra of $\tbar$ in $\cat$.
The functor $\cat(\tbar,-)$ from $\cbart/(\shift T')$ to $\modlbar$ inverts all morphisms
in $\sc$, as proved in~\cite[Lemma 2.4]{BMloc1}. By the universal property of localisations,
there is a (unique) functor $\cat_\sc \stackrel{F'}{\gfl}\modlbar$ such that
$\cat(\tbar,-) = F' L_\sc$. Assume that the image under $L_\sc$ of a morphism $f$ in $\cbart$ is zero.
Then $F'L_\sc(f)=0$ so that $\cat(\tbar,f)$ is zero in $\modlbar$. By~\cite[Lemma 2.3]{BMloc1},
this implies that $f$ factors through $\tbar^\perp$. Since $X$ is in $\cbart$,
this implies, by Lemma~\ref{lemma: factors}, that $f$ factors through $\cbart\cap\tbar^\perp$,
which is $\add\shift T'$ by Lemma~\ref{lemma: compute perps}. Therefore $f$ is zero
in $\cbart/(\shift T')$ and the functor $L_\sc$ is faithful.
\end{proof}

\begin{rk}
The reader might wonder why our proof makes a detour through the category $\modlbar$. One might think of a more direct proof as follows.
Since we have an inclusion $\sc\subseteq\widetilde{\sc}$, it is enough to prove that
every morphism in $\widetilde{\sc}$ is inverted in $\cat_\sc$.
This should easily follow from lemma~\ref{lemma: inverse}: Let
$X\stackrel{f}{\gfl}Y$ be a morphism in $\widetilde{\sc}$. Then there is a commutative diagram
\[
\xymatrix{
R_0 X \dr^{f'} \bas_{\eta_X} & R_0 Y \bas^{\eta_Y} \\
X \dr^f & Y
}
\]
where $R_0 X, R_0 Y$ are in $\cbart$ and $\eta_X,\eta_Y$ in $\sc$.
It thus only remains to be checked that the morphism $f'$ can be chosen in $\widetilde{\sc}$.
If so, Lemma~\ref{lemma: inverse} would imply that $f'$ is inverted by $L_\sc$. Since
$\eta_X$ and $\eta_Y$ are in $\sc$, $f$ would aslo be inverted by $L_\sc$.
The problem here is that even though it is easily checked that $\widetilde{\sc}$ is stable under composition,
$\widetilde{\sc}$ does not seem to satisfy the 2-out-of-3 property.
\end{rk}

\section{Examples}
\subsection{Mutating a cluster-tilting object at a loop}
\label{ss:example1}

We consider the triangulated~\cite{Keller-Orbit} orbit category
$D^\text{b}(\operatorname{mod}\rm{A}_9) / \tau^3[1]$.
Its Auslander--Reiten quiver is depicted in Figure~\ref{figure: example1}.
Copies of a fundamental domain are indicated by
dashed lines. Let $T$ be the direct sum $a\oplus b\oplus c$. Then $T$ is a cluster-tilting object with a loop at
$c$.

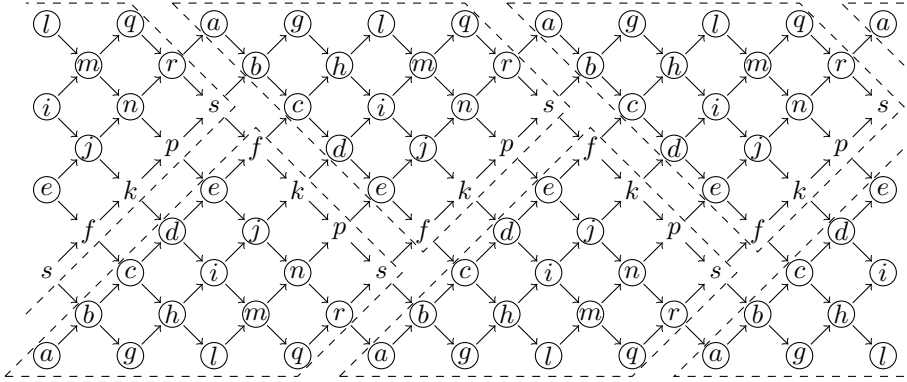
\begin{figure}
\[
\begin{tikzpicture}[scale=0.55,
baseline=(bb.base),
quivarrow/.style={black,->,shorten <=6pt, shorten >=6pt},
funddomain/.style={black,dashed}]
\newcommand{\circradius}{9pt} %

\path (0,0) node (bb) {}; % baseline

\foreach \x in {0,1,2,3,4,5,6,7,8,9}
{
\draw[quivarrow] (2*\x+0,0) -- (2*\x+1,1);
\draw[quivarrow] (2*\x+0,2) -- (2*\x+1,1);
\draw[quivarrow] (2*\x+0,2) -- (2*\x+1,3);
\draw[quivarrow] (2*\x+0,4) -- (2*\x+1,3);
\draw[quivarrow] (2*\x+0,4) -- (2*\x+1,5);
\draw[quivarrow] (2*\x+0,6) -- (2*\x+1,5);
\draw[quivarrow] (2*\x+0,6) -- (2*\x+1,7);
\draw[quivarrow] (2*\x+0,8) -- (2*\x+1,7);

\draw[quivarrow] (2*\x+1,1) -- (2*\x+2,0);
\draw[quivarrow] (2*\x+1,1) -- (2*\x+2,2);
\draw[quivarrow] (2*\x+1,3) -- (2*\x+2,2);
\draw[quivarrow] (2*\x+1,3) -- (2*\x+2,4);
\draw[quivarrow] (2*\x+1,5) -- (2*\x+2,4);
\draw[quivarrow] (2*\x+1,5) -- (2*\x+2,6);
\draw[quivarrow] (2*\x+1,7) -- (2*\x+2,6);
\draw[quivarrow] (2*\x+1,7) -- (2*\x+2,8);
};

\draw (0,0) node {$a$};
\draw (0,0) circle(\circradius);
\draw (1,1) node {$b$};
\draw (1,1) circle(\circradius);
\draw (2,2) node {$c$};
\draw (2,2) circle(\circradius);
\draw (3,3) node {$d$};
\draw (3,3) circle(\circradius);
\draw (4,4) node {$e$};
\draw (4,4) circle(\circradius);
\draw (5,5) node {$f$};
\draw (2,0) node {$g$};
\draw (2,0) circle(\circradius);
\draw (3,1) node {$h$};
\draw (3,1) circle(\circradius);
\draw (4,2) node {$i$};
\draw (4,2) circle(\circradius);
\draw (5,3) node {$j$};
\draw (5,3) circle(\circradius);
\draw (6,4) node {$k$};
\draw (4,0) node {$l$};
\draw (4,0) circle(\circradius);
\draw (5,1) node {$m$};
\draw (5,1) circle(\circradius);
\draw (6,2) node {$n$};
\draw (6,2) circle(\circradius);
\draw (7,3) node {$p$};
\draw (6,0) node {$q$};
\draw (6,0) circle(\circradius);
\draw (7,1) node {$r$};
\draw (7,1) circle(\circradius);
\draw (8,2) node {$s$};

\begin{scope}[shift={(8,0)}]
\draw (0,0) node {$a$};
\draw (0,0) circle(\circradius);
\draw (1,1) node {$b$};
\draw (1,1) circle(\circradius);
\draw (2,2) node {$c$};
\draw (2,2) circle(\circradius);
\draw (3,3) node {$d$};
\draw (3,3) circle(\circradius);
\draw (4,4) node {$e$};
\draw (4,4) circle(\circradius);
\draw (5,5) node {$f$};
\draw (2,0) node {$g$};
\draw (2,0) circle(\circradius);
\draw (3,1) node {$h$};
\draw (3,1) circle(\circradius);
\draw (4,2) node {$i$};
\draw (4,2) circle(\circradius);
\draw (5,3) node {$j$};
\draw (5,3) circle(\circradius);
\draw (6,4) node {$k$};
\draw (4,0) node {$l$};
\draw (4,0) circle(\circradius);
\draw (5,1) node {$m$};
\draw (5,1) circle(\circradius);
\draw (6,2) node {$n$};
\draw (6,2) circle(\circradius);
\draw (7,3) node {$p$};
\draw (6,0) node {$q$};
\draw (6,0) circle(\circradius);
\draw (7,1) node {$r$};
\draw (7,1) circle(\circradius);
\draw (8,2) node {$s$};
\end{scope}

\begin{scope}[shift={(16,0)}]
\draw (0,0) node {$a$};
\draw (0,0) circle(\circradius);
\draw (1,1) node {$b$};
\draw (1,1) circle(\circradius);
\draw (2,2) node {$c$};
\draw (2,2) circle(\circradius);
\draw (3,3) node {$d$};
\draw (3,3) circle(\circradius);
\draw (4,4) node {$e$};
\draw (4,4) circle(\circradius);
\draw (2,0) node {$g$};
\draw (2,0) circle(\circradius);
\draw (3,1) node {$h$};
\draw (3,1) circle(\circradius);
\draw (4,2) node {$i$};
\draw (4,2) circle(\circradius);
\draw (4,0) node {$l$};
\draw (4,0) circle(\circradius);
\end{scope}

\draw (4,8) node {$a$};
\draw (4,8) circle(\circradius);
\draw (5,7) node {$b$};
\draw (5,7) circle(\circradius);
\draw (6,6) node {$c$};
\draw (6,6) circle(\circradius);
\draw (7,5) node {$d$};
\draw (7,5) circle(\circradius);
\draw (8,4) node {$e$};
\draw (8,4) circle(\circradius);
\draw (9,3) node {$f$};
\draw (6,8) node {$g$};
\draw (6,8) circle(\circradius);
\draw (7,7) node {$h$};
\draw (7,7) circle(\circradius);
\draw (8,6) node {$i$};
\draw (8,6) circle(\circradius);
\draw (9,5) node {$j$};
\draw (9,5) circle(\circradius);
\draw (10,4) node {$k$};
\draw (8,8) node {$l$};
\draw (8,8) circle(\circradius);
\draw (9,7) node {$m$};
\draw (9,7) circle(\circradius);
\draw (10,6) node {$n$};
\draw (10,6) circle(\circradius);
\draw (11,5) node {$p$};
\draw (10,8) node {$q$};
\draw (10,8) circle(\circradius);
\draw (11,7) node {$r$};
\draw (11,7) circle(\circradius);
\draw (12,6) node {$s$};

\begin{scope}[shift={(8,0)}]
\draw (4,8) node {$a$};
\draw (4,8) circle(\circradius);
\draw (5,7) node {$b$};
\draw (5,7) circle(\circradius);
\draw (6,6) node {$c$};
\draw (6,6) circle(\circradius);
\draw (7,5) node {$d$};
\draw (7,5) circle(\circradius);
\draw (8,4) node {$e$};
\draw (8,4) circle(\circradius);
\draw (9,3) node {$f$};
\draw (6,8) node {$g$};
\draw (6,8) circle(\circradius);
\draw (7,7) node {$h$};
\draw (7,7) circle(\circradius);
\draw (8,6) node {$i$};
\draw (8,6) circle(\circradius);
\draw (9,5) node {$j$};
\draw (9,5) circle(\circradius);
\draw (10,4) node {$k$};
\draw (8,8) node {$l$};
\draw (8,8) circle(\circradius);
\draw (9,7) node {$m$};
\draw (9,7) circle(\circradius);
\draw (10,6) node {$n$};
\draw (10,6) circle(\circradius);
\draw (11,5) node {$p$};
\draw (10,8) node {$q$};
\draw (10,8) circle(\circradius);
\draw (11,7) node {$r$};
\draw (11,7) circle(\circradius);
\draw (12,6) node {$s$};
\end{scope}

\draw (20,8) node {$a$};
\draw (20,8) circle(\circradius);

\begin{scope}[shift={(-8,0)}]
\draw (8,4) node {$e$};
\draw (8,4) circle(\circradius);
\draw (9,3) node {$f$};
\draw (8,6) node {$i$};
\draw (8,6) circle(\circradius);
\draw (9,5) node {$j$};
\draw (9,5) circle(\circradius);
\draw (10,4) node {$k$};
\draw (8,8) node {$l$};
\draw (8,8) circle(\circradius);
\draw (9,7) node {$m$};
\draw (9,7) circle(\circradius);
\draw (10,6) node {$n$};
\draw (10,6) circle(\circradius);
\draw (11,5) node {$p$};
\draw (10,8) node {$q$};
\draw (10,8) circle(\circradius);
\draw (11,7) node {$r$};
\draw (11,7) circle(\circradius);
\draw (12,6) node {$s$};
\end{scope}

\draw (0,2) node {$s$};

\draw[funddomain] (-1,-0.5) -- (5,5.5) -- (8.5,2) -- (6,-0.5) -- (-1,-0.5);

\draw[funddomain] (7,-0.5) -- (13,5.5) -- (16.5,2) -- (14,-0.5) -- (7,-0.5);

\draw[funddomain] (20.5,-0.5) -- (15,-0.5) -- (20.5,5);

\draw[funddomain] (3,8.5) -- (9,2.5) -- (12.5,6) -- (10,8.5) -- (3,8.5);

\draw[funddomain] (11,8.5) -- (17,2.5) -- (20.5,6) -- (18,8.5) -- (11,8.5);

\draw[funddomain] (20.5,8.5) -- (19,8.5) -- (20.5,7);

\draw[funddomain] (-0.5,1) -- (4.5,6) -- (2,8.5) -- (-0.5,8.5);

\end{tikzpicture}
\]
 \caption{The AR-quiver of $D^\text{b}(\operatorname{mod}\rm{A}_9) / \tau^3[1]$; Example~\ref{ss:example1}}
 \label{figure: example1}
\end{figure}

Let $T'$ be the cluster-tilting object obtained
by mutating $T$ at $c$. Since there is a triangle
$s \fl b\oplus b \fl c \fl$, we have
$T' = a\oplus b\oplus s$.
The indecomposable objects lying in $\cbart$ are encircled (recall that $\cbart = T\ast\shift\tbar = \tbar\ast\shift T'$).
The objects $a,b,c$ (respectively, $n,q,r$) belong to $\cbart$ since they are in $\add T$ (respectively $\add\shift T'$). The remaining encircled objects are in
$\cbart$ since there are triangles $a \fl c \fl d \fl$;
$b\fl c\fl e\fl$;
$a\fl b\fl g\fl$;
$a\fl c\fl h\fl$;
$a\oplus a\fl c\fl i\fl$;
$a\oplus b\fl c\fl j\fl$;
$b\fl c\fl l$ and
$a\oplus b\fl c\fl m\fl$.
The other four indecomposable objects are not in
$\cbart$ since $s$ is the shift of $c$, and there are triangles $c\fl c\fl f\fl$, $a\oplus c\fl c\fl k\fl$ and
$b\oplus c\fl c\fl p\fl$.

By Theorem~\ref{theorem: equivalence of categories}, the categories $\cbart/(\shift T')$ and
$\tau\cbart/(\tau T)$ are equivalent. These two categories are illustrated in
Figure~\ref{figure: example1-equivalence}.

\begin{figure}
\[
\begin{tikzpicture}[xscale=0.35, yscale=0.5,
baseline=(bb.base),
quivarrow/.style={black,->,shorten <=6pt, shorten >=6pt},
funddomain/.style={black,dashed}]
\newcommand{\circradius}{9pt} %

\path (0,0) node (bb) {}; % baseline

\draw (0,0) node {$a$};
\draw (1,1) node {$b$};
\draw (2,2) node {$c$};
\draw (3,3) node {$d$};
\draw (4,4) node {$e$};
\draw (2,0) node {$g$};
\draw (3,1) node {$h$};
\draw (4,2) node {$i$};
\draw (5,3) node {$j$};
\draw (4,0) node {$l$};
\draw (5,1) node {$m$};

\draw[gray] (6,2) node {$n$};
\draw[gray] (6,0) node {$q$};
\draw[gray] (7,1) node {$r$};

\draw[quivarrow] (2,6) -- (4,4);
\draw[quivarrow] (1,5) -- (3,3);

\draw[quivarrow] (0,0) -- (1,1);
\draw[quivarrow] (1,1) -- (2,2);
\draw[quivarrow] (2,2) -- (3,3);
\draw[quivarrow] (3,3) -- (4,4);
\draw[quivarrow] (2,0) -- (3,1);
\draw[quivarrow] (3,1) -- (4,2);
\draw[quivarrow] (4,2) -- (5,3);
\draw[quivarrow] (4,0) -- (5,1);

\draw[quivarrow] (1,1) -- (2,0);
\draw[quivarrow] (2,2) -- (3,1);
\draw[quivarrow] (3,3) -- (4,2);
\draw[quivarrow] (4,4) -- (5,3);
\draw[quivarrow] (3,1) -- (4,0);
\draw[quivarrow] (4,2) -- (5,1);

\draw[quivarrow] (4,4) -- (6,6);
\draw[quivarrow] (5,3) -- (7,5);

\draw (4,8) node {$a$};
\draw (5,7) node {$b$};
\draw (6,6) node {$c$};
\draw (7,5) node {$d$};
\draw (8,4) node {$e$};
\draw (6,8) node {$g$};
\draw (7,7) node {$h$};
\draw (8,6) node {$i$};
\draw (9,5) node {$j$};
\draw (8,8) node {$l$};
\draw (9,7) node {$m$};

\draw[quivarrow] (4,8) -- (5,7);
\draw[quivarrow] (5,7) -- (6,6);
\draw[quivarrow] (6,6) -- (7,5);
\draw[quivarrow] (7,5) -- (8,4);
\draw[quivarrow] (6,8) -- (7,7);
\draw[quivarrow] (7,7) -- (8,6);
\draw[quivarrow] (8,6) -- (9,5);
\draw[quivarrow] (8,8) -- (9,7);

\draw[quivarrow] (5,7) -- (6,8);
\draw[quivarrow] (6,6) -- (7,7);
\draw[quivarrow] (7,5) -- (8,6);
\draw[quivarrow] (8,4) -- (9,5);
\draw[quivarrow] (7,7) -- (8,8);
\draw[quivarrow] (8,6) -- (9,7);

\draw[quivarrow] (8,4) -- (10,2);
\draw[quivarrow] (9,5) -- (11,3);

\draw[gray] (10,6) node {$n$};
\draw[gray] (10,8) node {$q$};
\draw[gray] (11,7) node {$r$};

\begin{scope}[shift={(8,0)}]
\draw (0,0) node {$a$};
\draw (1,1) node {$b$};
\draw (2,2) node {$c$};
\draw (3,3) node {$d$};
\draw (4,4) node {$e$};
\draw (2,0) node {$g$};
\draw (3,1) node {$h$};
\draw (4,2) node {$i$};
\draw (5,3) node {$j$};
\draw (4,0) node {$l$};
\draw (5,1) node {$m$};

\draw[gray] (6,2) node {$n$};
\draw[gray] (6,0) node {$q$};
\draw[gray] (7,1) node {$r$};

\draw[quivarrow] (0,0) -- (1,1);
\draw[quivarrow] (1,1) -- (2,2);
\draw[quivarrow] (2,2) -- (3,3);
\draw[quivarrow] (3,3) -- (4,4);
\draw[quivarrow] (2,0) -- (3,1);
\draw[quivarrow] (3,1) -- (4,2);
\draw[quivarrow] (4,2) -- (5,3);
\draw[quivarrow] (4,0) -- (5,1);

\draw[quivarrow] (1,1) -- (2,0);
\draw[quivarrow] (2,2) -- (3,1);
\draw[quivarrow] (3,3) -- (4,2);
\draw[quivarrow] (4,4) -- (5,3);
\draw[quivarrow] (3,1) -- (4,0);
\draw[quivarrow] (4,2) -- (5,1);

\draw[quivarrow] (4,4) -- (6,6);
\draw[quivarrow] (5,3) -- (7,5);
\end{scope}

\end{tikzpicture}
\quad\quad\,\,
\begin{tikzpicture}[xscale=0.35, yscale=0.5,
baseline=(bb.base),
quivarrow/.style={black,->,shorten <=6pt, shorten >=6pt},
funddomain/.style={black,dashed}]
\newcommand{\circradius}{9pt} %

\path (0,0) node (bb) {}; % baseline

\draw (0,0) node {$a$};
\draw (1,1) node {$b$};
\draw (2,2) node {$c$};
\draw (3,3) node {$d$};
\draw (5,5) node {$f$};
\draw (2,0) node {$g$};
\draw (3,1) node {$h$};
\draw (4,2) node {$i$};
\draw (6,4) node {$k$};
\draw (4,0) node {$l$};
\draw (5,1) node {$m$};

\draw[gray] (8,2) node {$s$};
\draw[gray] (6,0) node {$q$};
\draw[gray] (7,1) node {$r$};

\draw[quivarrow] (2,4) -- (3,3);
\draw[quivarrow] (1,3) -- (2,2);

\draw[quivarrow] (0,0) -- (1,1);
\draw[quivarrow] (1,1) -- (2,2);
\draw[quivarrow] (2,2) -- (3,3);
\draw[quivarrow] (3,3) -- (5,5);
\draw[quivarrow] (2,0) -- (3,1);
\draw[quivarrow] (3,1) -- (4,2);
\draw[quivarrow] (4,2) -- (6,4);
\draw[quivarrow] (4,0) -- (5,1);

\draw[quivarrow] (1,1) -- (2,0);
\draw[quivarrow] (2,2) -- (3,1);
\draw[quivarrow] (3,3) -- (4,2);
\draw[quivarrow] (5,5) -- (6,4);
\draw[quivarrow] (3,1) -- (4,0);
\draw[quivarrow] (4,2) -- (5,1);

\draw[quivarrow] (5,5) -- (6,6);
\draw[quivarrow] (6,4) -- (7,5);

\draw (4,8) node {$a$};
\draw (5,7) node {$b$};
\draw (6,6) node {$c$};
\draw (7,5) node {$d$};
\draw (9,3) node {$f$};
\draw (6,8) node {$g$};
\draw (7,7) node {$h$};
\draw (8,6) node {$i$};
\draw (10,4) node {$k$};
\draw (8,8) node {$l$};
\draw (9,7) node {$m$};

\draw[gray] (12,6) node {$s$};
\draw[gray] (10,8) node {$q$};
\draw[gray] (11,7) node {$r$};

\draw[quivarrow] (4,8) -- (5,7);
\draw[quivarrow] (5,7) -- (6,6);
\draw[quivarrow] (6,6) -- (7,5);
\draw[quivarrow] (7,5) -- (9,3);
\draw[quivarrow] (6,8) -- (7,7);
\draw[quivarrow] (7,7) -- (8,6);
\draw[quivarrow] (8,6) -- (10,4);
\draw[quivarrow] (8,8) -- (9,7);

\draw[quivarrow] (5,7) -- (6,8);
\draw[quivarrow] (6,6) -- (7,7);
\draw[quivarrow] (7,5) -- (8,6);
\draw[quivarrow] (9,3) -- (10,4);
\draw[quivarrow] (7,7) -- (8,8);
\draw[quivarrow] (8,6) -- (9,7);

\draw[quivarrow] (7,5) -- (9,3);
\draw[quivarrow] (8,6) -- (10,4);

\draw[quivarrow] (9,3) -- (10,2);
\draw[quivarrow] (10,4) -- (11,3);

\begin{scope}[shift={(8,0)}]
\draw (0,0) node {$a$};
\draw (1,1) node {$b$};
\draw (2,2) node {$c$};
\draw (3,3) node {$d$};
\draw (5,5) node {$f$};
\draw (2,0) node {$g$};
\draw (3,1) node {$h$};
\draw (4,2) node {$i$};
\draw (6,4) node {$k$};
\draw (4,0) node {$l$};
\draw (5,1) node {$m$};

\draw[gray] (8,2) node {$s$};
\draw[gray] (6,0) node {$q$};
\draw[gray] (7,1) node {$r$};

\draw[quivarrow] (0,0) -- (1,1);
\draw[quivarrow] (1,1) -- (2,2);
\draw[quivarrow] (2,2) -- (3,3);
\draw[quivarrow] (3,3) -- (5,5);
\draw[quivarrow] (2,0) -- (3,1);
\draw[quivarrow] (3,1) -- (4,2);
\draw[quivarrow] (4,2) -- (6,4);
\draw[quivarrow] (4,0) -- (5,1);

\draw[quivarrow] (1,1) -- (2,0);
\draw[quivarrow] (2,2) -- (3,1);
\draw[quivarrow] (3,3) -- (4,2);
\draw[quivarrow] (5,5) -- (6,4);
\draw[quivarrow] (3,1) -- (4,0);
\draw[quivarrow] (4,2) -- (5,1);

\draw[quivarrow] (5,5) -- (6,6);
\draw[quivarrow] (6,4) -- (7,5);
\end{scope}

\end{tikzpicture}
\]
\caption{The categories $\cbart/(\shift T')$ and $\tau\cbart/(\tau T)$; Example~\ref{ss:example1}}
\label{figure: example1-equivalence}
\end{figure}

\subsection{Mutating a rigid object at a loop}
\label{ss:example2}

In the same category
$D^\text{b}(\operatorname{mod}\rm{A}_9) / \tau^3[1]$
as in the previous example, we now consider the rigid object $T = a\oplus c$. There is a triangle
$n \fl a\oplus a \fl c \fl$
(see Figure~\ref{figure: example2}),
so that we can choose $T'$ to be $a\oplus n$.
Indecomposable objects in $\cbart$ are encircled.
The shift of $T'$ is $i\oplus q$. Deleting either vertices labelled $a$ and $c$,
or vertices $i$ and $q$, in the encircled part of the
AR-quiver yields isomorphic quivers,
as depicted in Figure \ref{figure: example2-equivalence}.

\begin{figure}
\[
\begin{tikzpicture}[scale=0.55,
baseline=(bb.base),
quivarrow/.style={black,->,shorten <=6pt, shorten >=6pt},
funddomain/.style={black,dashed}]
\newcommand{\circradius}{9pt} %

\path (0,0) node (bb) {}; % baseline

\foreach \x in {0,1,2,3,4,5,6,7,8,9}
{
\draw[quivarrow] (2*\x+0,0) -- (2*\x+1,1);
\draw[quivarrow] (2*\x+0,2) -- (2*\x+1,1);
\draw[quivarrow] (2*\x+0,2) -- (2*\x+1,3);
\draw[quivarrow] (2*\x+0,4) -- (2*\x+1,3);
\draw[quivarrow] (2*\x+0,4) -- (2*\x+1,5);
\draw[quivarrow] (2*\x+0,6) -- (2*\x+1,5);
\draw[quivarrow] (2*\x+0,6) -- (2*\x+1,7);
\draw[quivarrow] (2*\x+0,8) -- (2*\x+1,7);

\draw[quivarrow] (2*\x+1,1) -- (2*\x+2,0);
\draw[quivarrow] (2*\x+1,1) -- (2*\x+2,2);
\draw[quivarrow] (2*\x+1,3) -- (2*\x+2,2);
\draw[quivarrow] (2*\x+1,3) -- (2*\x+2,4);
\draw[quivarrow] (2*\x+1,5) -- (2*\x+2,4);
\draw[quivarrow] (2*\x+1,5) -- (2*\x+2,6);
\draw[quivarrow] (2*\x+1,7) -- (2*\x+2,6);
\draw[quivarrow] (2*\x+1,7) -- (2*\x+2,8);
};

\draw (0,0) node {$a$};
\draw (0,0) circle(\circradius);
\draw (1,1) node {$b$};
\draw (2,2) node {$c$};
\draw (2,2) circle(\circradius);
\draw (3,3) node {$d$};
\draw (3,3) circle(\circradius);
\draw (4,4) node {$e$};
\draw (5,5) node {$f$};
\draw (2,0) node {$g$};
\draw (3,1) node {$h$};
\draw (3,1) circle(\circradius);
\draw (4,2) node {$i$};
\draw (4,2) circle(\circradius);
\draw (5,3) node {$j$};
\draw (6,4) node {$k$};
\draw (4,0) node {$l$};
\draw (5,1) node {$m$};
\draw (6,2) node {$\mathbf{n}$};
\draw (7,3) node {$p$};
\draw (6,0) node {$q$};
\draw (6,0) circle(\circradius);
\draw (7,1) node {$r$};
\draw (8,2) node {$s$};

\begin{scope}[shift={(8,0)}]
\draw (0,0) node {$a$};
\draw (0,0) circle(\circradius);
\draw (1,1) node {$b$};
\draw (2,2) node {$c$};
\draw (2,2) circle(\circradius);
\draw (3,3) node {$d$};
\draw (3,3) circle(\circradius);
\draw (4,4) node {$e$};
\draw (5,5) node {$f$};
\draw (2,0) node {$g$};
\draw (3,1) node {$h$};
\draw (3,1) circle(\circradius);
\draw (4,2) node {$i$};
\draw (4,2) circle(\circradius);
\draw (5,3) node {$j$};
\draw (6,4) node {$k$};
\draw (4,0) node {$l$};
\draw (5,1) node {$m$};
\draw (6,2) node {$\mathbf{n}$};
\draw (7,3) node {$p$};
\draw (6,0) node {$q$};
\draw (6,0) circle(\circradius);
\draw (7,1) node {$r$};
\draw (8,2) node {$s$};
\end{scope}

\begin{scope}[shift={(16,0)}]
\draw (0,0) node {$a$};
\draw (0,0) circle(\circradius);
\draw (1,1) node {$b$};
\draw (2,2) node {$c$};
\draw (2,2) circle(\circradius);
\draw (3,3) node {$d$};
\draw (3,3) circle(\circradius);
\draw (4,4) node {$e$};
\draw (2,0) node {$g$};
\draw (3,1) node {$h$};
\draw (3,1) circle(\circradius);
\draw (4,2) node {$i$};
\draw (4,2) circle(\circradius);
\draw (4,0) node {$l$};
\end{scope}

\draw (4,8) node {$a$};
\draw (4,8) circle(\circradius);
\draw (5,7) node {$b$};
\draw (6,6) node {$c$};
\draw (6,6) circle(\circradius);
\draw (7,5) node {$d$};
\draw (7,5) circle(\circradius);
\draw (8,4) node {$e$};
\draw (9,3) node {$f$};
\draw (6,8) node {$g$};
\draw (7,7) node {$h$};
\draw (7,7) circle(\circradius);
\draw (8,6) node {$i$};
\draw (8,6) circle(\circradius);
\draw (9,5) node {$j$};
\draw (10,4) node {$k$};
\draw (8,8) node {$l$};
\draw (9,7) node {$m$};
\draw (10,6) node {$\mathbf{n}$};
\draw (11,5) node {$p$};
\draw (10,8) node {$q$};
\draw (10,8) circle(\circradius);
\draw (11,7) node {$r$};
\draw (12,6) node {$s$};

\begin{scope}[shift={(8,0)}]
\draw (4,8) node {$a$};
\draw (4,8) circle(\circradius);
\draw (5,7) node {$b$};
\draw (6,6) node {$c$};
\draw (6,6) circle(\circradius);
\draw (7,5) node {$d$};
\draw (7,5) circle(\circradius);
\draw (8,4) node {$e$};
\draw (9,3) node {$f$};
\draw (6,8) node {$g$};
\draw (7,7) node {$h$};
\draw (7,7) circle(\circradius);
\draw (8,6) node {$i$};
\draw (8,6) circle(\circradius);
\draw (9,5) node {$j$};
\draw (10,4) node {$k$};
\draw (8,8) node {$l$};
\draw (9,7) node {$m$};
\draw (10,6) node {$\mathbf{n}$};
\draw (11,5) node {$p$};
\draw (10,8) node {$q$};
\draw (10,8) circle(\circradius);
\draw (11,7) node {$r$};
\draw (12,6) node {$s$};
\end{scope}

\draw (20,8) node {$a$};
\draw (20,8) circle(\circradius);

\begin{scope}[shift={(-8,0)}]
\draw (8,4) node {$e$};
\draw (9,3) node {$f$};
\draw (8,6) node {$i$};
\draw (8,6) circle(\circradius);
\draw (9,5) node {$j$};
\draw (10,4) node {$k$};
\draw (8,8) node {$l$};
\draw (9,7) node {$m$};
\draw (10,6) node {$\mathbf{n}$};
\draw (11,5) node {$p$};
\draw (10,8) node {$q$};
\draw (10,8) circle(\circradius);
\draw (11,7) node {$r$};
\draw (12,6) node {$s$};
\end{scope}

\draw (0,2) node {$s$};

\draw[funddomain] (-1,-0.5) -- (5,5.5) -- (8.5,2) -- (6,-0.5) -- (-1,-0.5);

\draw[funddomain] (7,-0.5) -- (13,5.5) -- (16.5,2) -- (14,-0.5) -- (7,-0.5);

\draw[funddomain] (20.5,-0.5) -- (15,-0.5) -- (20.5,5);

\draw[funddomain] (3,8.5) -- (9,2.5) -- (12.5,6) -- (10,8.5) -- (3,8.5);

\draw[funddomain] (11,8.5) -- (17,2.5) -- (20.5,6) -- (18,8.5) -- (11,8.5);

\draw[funddomain] (20.5,8.5) -- (19,8.5) -- (20.5,7);

\draw[funddomain] (-0.5,1) -- (4.5,6) -- (2,8.5) -- (-0.5,8.5);

\end{tikzpicture}
\]

\caption{The AR-quiver of $D^\text{b}(\operatorname{mod}\rm{A}_9) / \tau^3[1]$; Example~\ref{ss:example2}}
\label{figure: example2}
\end{figure}

\begin{figure}
\[
\begin{tikzpicture}[xscale=0.35, yscale=0.5,
baseline=(bb.base),
quivarrow/.style={black,->,shorten <=6pt, shorten >=6pt},
funddomain/.style={black,dashed}]
\newcommand{\circradius}{9pt} %

\path (0,0) node (bb) {}; % baseline

\draw (0,0) node {$a$};
\draw (2,2) node {$c$};
\draw (3,3) node {$d$};
\draw (3,1) node {$h$};
\draw[gray] (4,2) node {$i$};
\draw[gray] (6,0) node {$q$};

\draw[quivarrow] (0,0) -- (2,2);
\draw[quivarrow] (2,2) -- (3,3);
\draw[quivarrow] (2,2) -- (3,1);
\draw[quivarrow] (-1,5) -- (2,2);

\begin{scope}[shift={(8,0)}]
\draw (0,0) node {$a$};
\draw (2,2) node {$c$};
\draw (3,3) node {$d$};
\draw (3,1) node {$h$};
\draw[gray] (4,2) node {$i$};
\draw[gray] (6,0) node {$q$};

\draw[quivarrow] (0,0) -- (2,2);
\draw[quivarrow] (2,2) -- (3,3);
\draw[quivarrow] (2,2) -- (3,1);
\draw[quivarrow] (-1,5) -- (2,2);
\draw[quivarrow] (3,3) -- (6,6);
\end{scope}

\draw (4,8) node {$a$};
\draw (6,6) node {$c$};
\draw (7,5) node {$d$};
\draw (7,7) node {$h$};
\draw[gray] (8,6) node {$i$};
\draw[gray] (10,8) node {$q$};

\draw[quivarrow] (3,3) -- (6,6);
\draw[quivarrow] (4,8) -- (6,6);
\draw[quivarrow] (6,6) -- (7,5);
\draw[quivarrow] (6,6) -- (7,7);

\end{tikzpicture}
\quad\quad\quad\quad\quad
\begin{tikzpicture}[xscale=0.35, yscale=0.5,
baseline=(bb.base),
quivarrow/.style={black,->,shorten <=6pt, shorten >=6pt},
funddomain/.style={black,dashed}]
\newcommand{\circradius}{9pt} %

\path (0,0) node (bb) {}; % baseline

\draw[gray] (0,0) node {$a$};
\draw[gray] (2,2) node {$c$};
\draw (3,3) node {$d$};
\draw (3,1) node {$h$};
\draw (4,2) node {$i$};
\draw (6,0) node {$q$};

\draw[quivarrow] (3,3) -- (4,2);
\draw[quivarrow] (3,1) -- (4,2);
\draw[quivarrow] (4,2) -- (6,0);
\draw[quivarrow] (0,6) -- (3,3);
\draw[quivarrow] (4,2) -- (7,5);

\begin{scope}[shift={(8,0)}]
\draw[gray] (0,0) node {$a$};
\draw[gray] (2,2) node {$c$};
\draw (3,3) node {$d$};
\draw (3,1) node {$h$};
\draw (4,2) node {$i$};
\draw (6,0) node {$q$};

\draw[quivarrow] (3,3) -- (4,2);
\draw[quivarrow] (3,1) -- (4,2);
\draw[quivarrow] (4,2) -- (6,0);
\draw[quivarrow] (0,6) -- (3,3);
\draw[quivarrow] (4,2) -- (7,5);
\end{scope}

\draw[gray] (4,8) node {$a$};
\draw[gray] (6,6) node {$c$};
\draw (7,5) node {$d$};
\draw (7,7) node {$h$};
\draw (8,6) node {$i$};
\draw (10,8) node {$q$};

\draw[quivarrow] (7,7) -- (8,6);
\draw[quivarrow] (7,5) -- (8,6);
\draw[quivarrow] (8,6) -- (10,8);

\end{tikzpicture}
\]
\caption{The categories $\cbart/(\shift T')$ and $\cbart/(T)$; Example~\ref{ss:example2}}
\label{figure: example2-equivalence}
\end{figure}

\subsection{Mutating at a non-indecomposable summand}
\label{ss:example3}

In this example, we consider the triangulated~\cite{Keller-Orbit} orbit category
$D^\text{b}(\operatorname{mod}\rm{A}_5) / \tau^{-2}[1]$. It was shown in~\cite{BMRRT} that this category
is a Krull--Schmidt, Hom-finite category with a Serre functor and its Auslander--Reiten quiver is depicted in Figure~\ref{figure: example3}.
As for the previous examples, we have not drawn the arrows. The two subquivers in inside the dotted boxes
have to be identified so as to match the two copies of $a,b,c$ and $d$.
We choose rigid objects is $T = a\oplus b\oplus c\oplus d$, and $T' = a\oplus b\oplus c'\oplus d'$. Indecomposable objects in $\cbart$ are encircled.
The indecomposable objects labelled $e,f,g,h$ are the shifts of $a,b,c',d'$, respectively.
As predicted by Theorem~\ref{theorem: equivalence of categories}, one obtains isomorphic quivers by deleting either vertices $a,b,c,d$ or vertices $e,f,g,h$.

\begin{figure}
\[
\begin{tikzpicture}[scale=0.55,
baseline=(bb.base),
quivarrow/.style={black,->,shorten <=6pt, shorten >=6pt},
funddomain/.style={black,dashed}]
\newcommand{\circradius}{9pt} %

\path (0,0) node (bb) {}; % baseline

\foreach \x in {0,1,2,3,4,5,6,7}
{
\draw[quivarrow] (2*\x+0,0) -- (2*\x+1,1);
\draw[quivarrow] (2*\x+0,2) -- (2*\x+1,1);
\draw[quivarrow] (2*\x+0,2) -- (2*\x+1,3);
\draw[quivarrow] (2*\x+0,4) -- (2*\x+1,3);

\draw[quivarrow] (2*\x+1,1) -- (2*\x+2,0);
\draw[quivarrow] (2*\x+1,1) -- (2*\x+2,2);
\draw[quivarrow] (2*\x+1,3) -- (2*\x+2,2);
\draw[quivarrow] (2*\x+1,3) -- (2*\x+2,4);
};

\draw (0,0) node {$\bullet$};
\draw (4,0) node {$\bullet$};
\draw (6,0) node {$\bullet$};
\draw (8,0) node {$\bullet$};
\draw (10,0) node {$\bullet$};
\draw (12,0) node {$\bullet$};
\draw (14,0) node {$\bullet$};
\draw (1,1) node {$\bullet$};
\draw (5,1) node {$\bullet$};
\draw (9,1) node {$\bullet$};
\draw (13,1) node {$\bullet$};
\draw (0,2) node {$\bullet$};
\draw (4,2) node {$\bullet$};
\draw (10,2) node {$\bullet$};
\draw (14,2) node {$\bullet$};
\draw (3,3) node {$\bullet$};
\draw (11,3) node {$\bullet$};
\draw (15,3) node {$\bullet$};
\draw (0,4) node {$\bullet$};
\draw (2,4) node {$\bullet$};
\draw (4,4) node {$\bullet$};
\draw (8,4) node {$\bullet$};
\draw (10,4) node {$\bullet$};
\draw (16,4) node {$\bullet$};

\draw (1,3) node {$c'$};
\draw (2,2) node {$d'$};
\draw (2,0) node {$a$};
\draw (2,0) circle(\circradius);
\draw (3,1) node {$b$};
\draw (3,1) circle(\circradius);
\draw (5,3) node {$c$};
\draw (5,3) circle(\circradius);
\draw (6,4) node {$d$};
\draw (6,4) circle(\circradius);
\draw (7,1) node {$g$};
\draw (7,1) circle(\circradius);
\draw (8,2) node {$h$};
\draw (8,2) circle(\circradius);
\draw (9,3) node {$f$};
\draw (9,3) circle(\circradius);
\draw (11,1) node {$c'$};
\draw (12,2) node {$d'$};
\draw (13,3) node {$b$};
\draw (13,3) circle(\circradius);
\draw (12,4) node {$a$};
\draw (12,4) circle(\circradius);
\draw (15,1) node {$c$};
\draw (15,1) circle(\circradius);
\draw (16,0) node {$d$};
\draw (16,0) circle(\circradius);

\draw (4,0) circle(\circradius);
\draw (4,0) node {$\bullet$};
\draw (6,2) circle(\circradius);
\draw (6,2) node {$\bullet$};
\draw (7,3) circle(\circradius);
\draw (7,3) node {$\bullet$};
\draw (14,4) circle(\circradius);
\draw (14,4) node {$\bullet$};
\draw (16,2) circle(\circradius);
\draw (16,2) node {$\bullet$};

\draw[dotted] (1.2,0) -- (6,4.8) -- (6.8,4) -- (2,-0.8) -- (1.2,0);

\draw[dotted] (16.8,0) -- (12,4.8) -- (11.2,4) -- (16,-0.8) -- (16.8,0);

\end{tikzpicture}
\]
\caption{The AR-quiver of $D^\text{b}(\operatorname{mod}\rm{A}_5) / \tau^{-2}[1]$; Example~\ref{ss:example3}}
\label{figure: example3}
\end{figure}

\newcommand{\etalchar}[1]{$^{#1}$}

\end{document}